\newtheorem{theorem}{Theorem}[section]
\newtheorem{lemma}[theorem]{Lemma}
\theoremstyle{definition}
\newtheorem{remark}[theorem]{Remark}
\numberwithin{equation}{section}
\renewcommand{\Re}{\text{Re}\,}
\renewcommand{\Im}{\text{Im}\,}
\newcommand{\RR}{\ensuremath{\mathbb{R}}}
\newcommand{\CC}{\ensuremath{\mathbb{C}}}
\newcommand{\prtl}{\ensuremath{\partial}}
\newcommand{\supp}{\ensuremath{\text{supp}}}
\newcommand{\veps}{\ensuremath{\varepsilon}}
\newcommand{\nablatx}{\ensuremath{\nabla_{t,x}}}
\title[Estimates for the Wave Equation in Strictly Concave Domains]{Strichartz and Localized Energy Estimates for the Wave Equation in Strictly Concave Domains}
\thanks{The author was supported in part by the National Science Foundation, grants DMS-1301717 and DMS-1001529.}
\author[M. D. Blair]{Matthew D. Blair}
\address{Department of Mathematics and Statistics, University of New Mexico, Albuquerque, NM 87131, USA}
\email{blair@math.unm.edu}
\begin{document}
\begin{abstract}
We prove localized energy estimates for the wave equation in domains with a strictly concave boundary when homogeneous Dirichlet or Neumann conditions are imposed.  By restricting the solution to small, frequency dependent, space time collars of the boundary, it is seen that a stronger gain in regularity can be obtained relative to the usual energy estimates.  Mixed norm estimates of Strichartz and square function type follow as a result, using the energy estimates to control error terms which arise in a wave packet parametrix construction.  While the latter estimates are not new for Dirichlet conditions, the present approach provides an avenue for treating these estimates when Neumann conditions are imposed.  The method also treats Schr\"odinger equations with time independent coefficients. 
\end{abstract}
\maketitle

\section{Introduction}
Let $(M,g)$ be a $C^\infty$ Riemannian manifold of dimension $n\geq 2$ with compact, strictly geodesically concave boundary and $\Delta_g$ the \emph{nonnegative} Dirichlet or Neumann Laplacian.  Suppose $u,v: [-1,1] \times M \to \CC$ are solutions to the wave and Schr\"odinger equations
\begin{align}
(D_t^2-\Delta_g)u(t,x)&=0, &(u,\prtl_t u)|_{t=0}&=(f,g),\label{waveeqn}\\
(D_t+\Delta_g)v(t,x)&=0, &v|_{t=0} &= f,\label{schrodeqn}
\end{align}
(with $D_t = -i\prtl_t$) subject to either homogeneous Dirichlet or Neumann boundary conditions
\begin{equation}\label{BCs}
u(t,x)|_{x\in \prtl M} = 0 \qquad \text{ or } \qquad N u(t,x)|_{x\in \prtl M} = 0 ,
\end{equation}
where $N$ is a normal vector field along $\prtl \Omega$ and $u$ can be replaced by $v$ as appropriate.  We are concerned with establishing local Strichartz estimates for these solutions with $p,q>2$
\begin{align}
\|u\|_{L^p([-1,1]; L^q( M)) }& \leq C \left(\|f\|_{H^{\gamma}( M)} + \|g\|_{H^{\gamma-1}( M)}\right), & \frac 2p + \frac{n-1}q &\leq \frac{n-1}2, \label{wavestz}\\
\|v\|_{L^p([-1,1]; L^q( M)) }& \leq C \|f\|_{H^{\alpha}( M)}, & \frac 2p + \frac nq &\leq \frac n2.\label{schrodstz}
\end{align}
Here and in what follows, $H^s( M)$ denotes the $L^2$ Sobolev space of order $\gamma$ determined by the functional calculus of the Dirichlet/Neumann operator (see for example, the concluding comments in \cite[\S1]{bssschrod}). In particular, we are interested in \emph{scale invariant} estimates, ones where the regularity on the right hand side is the optimal regularity predicted by scaling, meaning that
\begin{align}
\frac 1p + \frac nq &= \frac n2-\gamma, & \text{ (Wave)}, \label{wavescale}\\
\frac 2p + \frac nq &= \frac n2-\alpha, &  \text{ (Schr\"odinger)}.\label{schrodscale}
\end{align}
For scale invariant triples $(p,q,\gamma)$ or $(p,q,\alpha)$, the Knapp example, a solution to the respective equations which is highly concentrated along a light ray, imposes the additional restrictions on the right in \eqref{wavestz}, \eqref{schrodstz}.  For the Schr\"odinger equation, this simply means that $\alpha \geq0$.  When strict inequality appears in \eqref{wavestz}, \eqref{schrodstz}, the estimate does not use the full rate of dispersion suggested by boundaryless problem $ M = \RR^n$, and hence such scale invariant triples are said to be \emph{subcritical}.  When $ M = \RR^n$ it is well known that the full range of Strichartz estimates are satisfied, at least up to certain endpoint cases which we neglect here.

The imposition of the boundary conditions \eqref{BCs} affect the flow of energy, complicating the development of these estimates for boundary value problems considerably, as one must now account for the geometry of the boundary and the nature of the condition itself (Dirichlet or Neumann).  In the present set of hypotheses, that $\prtl  M$ is strictly concave and compact, the full range of local estimates \eqref{wavestz} for the wave equation are known when Dirichlet conditions are imposed, this is due to Smith and Sogge \cite{SmSoCrit}.  When $ M$ is exterior to a strictly convex obstacle, the same is true for the Schr\"odinger equation \eqref{schrodstz}. This is due to Ivanovici \cite{ivanomt}, who also showed corresponding scale invariant estimates for the semiclassical equation with frequency localized data.  In these works, the authors used the Melrose-Taylor diffractive parametrix which is effective for homogeneous Dirichlet conditions given further developments of Zworski \cite{zworski}.

On the other hand, the author with Smith and Sogge proved a family of local, scale invariant estimates for the wave and Schr\"odinger equations \cite{bssbdrystz}, \cite{bssschrod}, operating under relaxed set of hypotheses on the geometry of the boundary and on the conditions themselves.  Namely, in the case of the wave equation, the boundary merely needs to be smooth and compact, while for Schr\"odinger equations, $ M$ need only be a nontrapping exterior domain (though scale invariant estimates of semiclassical type are proved in the process).  Moreover, both types of boundary conditions are allowed.  However, the admissibility conditions on $p,q$ are restrictive in comparison to the estimates in free space as for wave equations it is assumed that $\frac 3p + \frac{n-1}{q} \leq \frac{n-1}{2}$ when $n \leq 4$ and $\frac 1p +\frac 1q \leq \frac 12$ when $n\geq 4$ and for Schr\"odinger equations admissibility is defined by replacing $n-1$ by $n$ here.  These works were based on a parametrix construction due to Smith and Sogge \cite{smithsogge07} which gave similar estimates of square function type on arbitrary manifolds with boundary.

A common thread in the proof of Schr\"odinger estimates referenced above is the use of a local smoothing bound of Burq, G\'erard, and Tzvetkov \cite{bgtexterior} satisfied for solutions in nontrapping exterior domains
\begin{equation}\label{locsmoothunit}
\|\psi v\|_{L^2([-T,T];H^{s+\frac 12}( M))} \lesssim \|f\|_{H^{s}( M)}, \qquad \psi \in C^\infty_c(\overline{ M}).
\end{equation}
The interplay between these bounds and Strichartz estimates is observed in works of Journ\'e, Soffer, and Sogge \cite{jss} and Staffilani and Tataru \cite{ST}, and the arguments in the latter translate particularly well to boundary value problems.  The role of local smoothing bounds is to show that one can control the errors which arise by localizing the solution in the spatial variables, reducing matters to proving estimates on solutions which are concentrated near a frequency scale $\lambda$ and within a coordinate chart.  This allows one to employ a local parametrix construction within a chart without conflicting with the infinite propagation speed of solutions. In particular, after taking a Littlewood-Paley decomposition of the solution and passing to weighted $L^2$ estimates, matters are reduced to showing estimates for solutions to a ``semiclassical" Schr\"odinger equation over a unit time scale such as $[-1,1]$
\begin{equation}\label{semiclasseqn}
(D_t + \lambda^{-1} \Delta_g) v_\lambda =0, \qquad v_\lambda(t,\cdot)\text{ spectrally localized to frequencies $\approx \lambda$}.
\end{equation}
Since solutions to this equation exhibit propagation speed which uniformly bounded in $\lambda$, Strichartz estimates can be treated by a local parametrix construction.  Further details on these localization arguments can be found in \cite[\S2]{bssschrod}, \cite[\S2.1]{blairrls}, \cite[\S3]{ivanomt}, and elsewhere.  In the case of the wave equation, infinite speed of propagation is no longer an obstacle to local estimates. The analogous program is thus to combine the classical local energy decay estimates in nontrapping exterior domains with local Strichartz estimates near the boundary to yield Strichartz estimates over global time scales. This is due to Smith and Sogge \cite{SmSoGlobal}, Metcalfe \cite{metcalfeglobal}, and Burq \cite{burqglobal}.

In \cite{blairrls}, the author furthered these ideas, proving that whenever a refined family of estimates for the semiclassical Schr\"odinger equation in strictly concave domains are satisfied, scale invariant Strichartz estimates follow as a result. To state them, given a small dyadic number $\lambda^{-\frac 23} \leq 2^{-j} \leq 1$, we define
\begin{equation}\label{sjdef}
S_{<j} = \{(t,x) \in  [-1,1] \times M: d(x,\prtl  M) \leq 2^{-j} \}.
\end{equation}
The refined local smoothing estimates thus assert that one obtains an additional gain of $2^{- j/4}$ in the usual local smoothing estimates by restricting the solution to such a small collar of the boundary.  The heuristic argument for such a bound is that a wave packet concentrated along a glancing ray should spend a time comparable to $2^{-j/2}$ within $S_{<j}$, so that the square integral in time yields a gain of the square root of that amount.
\begin{theorem}\label{thm:blairrls}
Suppose  $\prtl M$ is strictly concave and compact, and that $v_\lambda$ is an arbitrary solution to the \textbf{semiclassical} Schr\"odinger equation \eqref{semiclasseqn}, satisfying \eqref{BCs}, with spectral localization taken with respect to the functional calculus of the respective Dirichlet or Neumann Laplacian $v_\lambda(t,\cdot) = \beta(\lambda^{-2}\Delta_g)v_\lambda(t,\cdot)$, $\beta \in C_c^\infty(\lambda/2,2\lambda)$.
\begin{enumerate}
\item If $M$ is compact and $v_\lambda(0,\cdot) \in L^2( M)$, then the refined local smoothing estimates
\begin{equation}\label{rlsschrod}
2^{\frac j4}\|v_\lambda\|_{L^2( S_{<j})} \leq C\|v_\lambda(0,\cdot)\|_{L^2( M)},
\end{equation}
are satisfied for any $\lambda^{-\frac 23} \leq 2^{-j} \leq 1$ with $C$ independent of $\lambda$, $j$, $v_\lambda$.
\item If \eqref{rlsschrod} is satisfied with $C$ uniform in $\lambda,j,v_\lambda$, then solutions to \eqref{semiclasseqn} with $v_\lambda(0,\cdot) \in L^2(M)$ satisfy the semiclassical Strichartz estimate
\[
\|v_\lambda\|_{L^p([-1,1]; L^q( M))} \leq C\lambda^{\alpha+\frac 1p}\|v_\lambda(0,\cdot)\|_{L^2( M)}, \qquad \text{provided } \frac 2p + \frac nq < \frac n2.
\]
Consequently, if $ M$ is the exterior of a connected, strictly convex obstacle the scale invariant estimates for the classical equation in \eqref{schrodstz} hold for the same exponents.  If $ M$ is compact, then \eqref{schrodstz} holds for the subcritical exponents, but with a loss of $1/p$ derivatives, that is, $H^\alpha$ on the right hand side is replaced by $H^{\alpha+1/p}$. \label{rlsconsequence}
\end{enumerate}
\end{theorem}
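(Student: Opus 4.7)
For Part (1), the refined local smoothing estimate \eqref{rlsschrod}, I would microlocally analyze $v_\lambda$ near $\prtl M$ via a Melrose--Taylor type diffractive parametrix. In Fermi coordinates $(x', x_n)$ with $\prtl M = \{x_n = 0\}$, the semiclassical Laplacian takes the form $\lambda^{-2}\Delta_g = D_{x_n}^2 + R(x, \lambda^{-1}D_{x'})$ modulo lower order terms, and strict concavity amounts to the condition $\prtl_{x_n} r|_{x_n=0} < 0$ on the glancing set, where $r$ is the principal symbol of $R$. This allows $v_\lambda$ to be written as a semiclassical Fourier integral operator whose amplitude carries a factor $\mathrm{Ai}(\lambda^{2/3}\zeta(x,\xi'))$ in the Dirichlet case (or $\mathrm{Ai}'$ in the Neumann case), with $\zeta$ vanishing on the glancing locus. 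The rapid decay of Airy functions for positive argument confines $v_\lambda$ to an $x_n$-layer of width $\lambda^{-2/3}$ in genuinely glancing directions. The heuristic that a packet with angular offset $\theta$ from grazing spends time $\sim \theta$ in a collar of width $\sim \theta^2$ translates, via a $TT^*$ argument combined with Plancherel in $\xi'$, into the $L^2$ gain of $2^{-j/4}$.

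For Part (2), assuming \eqref{rlsschrod}, I would decompose $v_\lambda = v_\lambda^{\mathrm{int}} + \sum_j v_\lambda^{(j)}$ dyadically by distance to the boundary, with $v_\lambda^{(j)}$ supported in a shell of width $2^{-j}$ for $\lambda^{-2/3} \leq 2^{-j} \leq 1$. The interior piece obeys the boundaryless semiclassical Strichartz estimate via a standard FIO parametrix. For each shell, I would partition $[-1,1]$ into $\sim 2^{j/2}$ subintervals of length comparable to the glancing transit time $2^{-j/2}$, and build a short-time parametrix on each: on this timescale the ray dynamics inside the shell mimic free space, so the boundaryless Strichartz estimate yields $L^p L^q$ control with the target loss $\lambda^{\alpha + 1/p}$ from $L^2$ data. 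Summing $\ell^p$ over the $2^{j/2}$ subintervals contributes $2^{j/(2p)}$, while \eqref{rlsschrod} provides a $2^{-j/4}$ gain on the $L^2$ norm of $v_\lambda^{(j)}$. The strict subcriticality $\frac 2p + \frac nq < \frac n2$ leaves positive room to dominate the residual powers of $2^j$ and sum geometrically in $j$.

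Passage from the semiclassical bound to \eqref{schrodstz} is then a standard Littlewood--Paley reduction: spectrally localize to dyadic frequency $\lambda$, use \eqref{locsmoothunit} to absorb commutator errors introduced by spatial cutoffs (see \cite[\S2]{bssschrod}), and square-sum in $\lambda$ by Minkowski since $p, q > 2$. For exterior domains a global-in-time analog of \eqref{locsmoothunit} holds by nontrapping, so the reduction is lossless and one recovers the scale-invariant $H^\alpha$ norm; on compact $M$ only the unit-time bound is available, yielding the $1/p$ derivative loss.

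The main obstacle is Part (1): establishing the diffractive parametrix uniformly across collars of widths $\lambda^{-2/3} \leq 2^{-j} \leq 1$, with constants independent of $\lambda$ and $j$, and carrying the construction over to Neumann conditions, where the Airy factor must be replaced by $\mathrm{Ai}'$ and the placement of its zeros requires distinct microlocal bookkeeping. The delicate interplay between Airy asymptotics, boundary curvature, and the tangential frequency decomposition is the heart of \eqref{rlsschrod}.
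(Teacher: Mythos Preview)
Your outline of Part~(2) is in the right spirit and, in any case, that half is not proved here: the paper cites it as the main theorem of \cite{blairrls}.  The new content is Part~(1), and there your proposal diverges sharply from the paper's argument.

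You propose to attack \eqref{rlsschrod} directly via a Melrose--Taylor diffractive parametrix for the semiclassical Schr\"odinger propagator, with Airy-function amplitudes.  The paper does something entirely different: it never builds a Schr\"odinger parametrix at all.  Instead it first proves the analogous localized energy estimate for the \emph{wave} equation (Theorem~\ref{thm:intrinsic}, itself derived from the coordinate estimate Theorem~\ref{thm:maincoordestimate}) by a \emph{positive commutator} argument in \S\ref{sec:locnrg}, then transfers that bound to Schr\"odinger in \S\ref{ss:schrod} by pure spectral theory: applying \eqref{intrinsiclocnrg} to $u(t,x)=e^{itk}f_k(x)$ gives a quasimode bound \eqref{quasimodebound}, and then expanding $v_\lambda$ in eigenfunctions and using Plancherel in $t$ reduces \eqref{rlsschrod} to summing these quasimode bounds over unit spectral windows.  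No parametrix, no Airy functions.

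This is not merely a stylistic difference.  You correctly flag the Neumann case as the crux of your approach, but you do not resolve it, and the paper explains why that is a genuine gap: the Melrose--Taylor machinery for Neumann data runs into the fact that the Neumann operator is only smoothing of order $2/3$ rather than $1$ (see the discussion around \cite[Ch.~8]{mt} in the introduction), and it is not known how to push the diffractive parametrix through in that setting.  The positive commutator route is designed precisely to sidestep this obstruction---the Neumann traces are handled instead through Tataru's $H^{1,5/6}$ boundary regularity (Theorem~\ref{thm:tatarubdry} and Lemma~\ref{lem:bdrytrace}), which recovers the missing regularity in an $X^{s,b}$-type norm.  So as written, your plan would likely succeed for Dirichlet conditions (where the result is already known by other means) but stalls exactly where the theorem is new.
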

When $M$ is a domain in $\RR^n$ exterior to a ball, the bound \eqref{rlsschrod} is due to Ivanovici \cite{ivanoballs}.  When $\prtl M$ is not assumed to  have this structure, these bounds are new and will be seen as a consequence of Theorem \ref{thm:intrinsic} below in \S\ref{ss:schrod}.  As alluded to above, \eqref{rlsconsequence} is the main theorem in \cite{blairrls}.  Strictly speaking, the estimates assumed there are for the classical Schr\"odinger equation \eqref{schrodeqn}, and take the form \eqref{rlsschrodclassical} below, but as noted in \cite[\S2.2]{blairrls}, the estimates for the semiclassical equation which follow are the crucial element in the proof.  The consequences for exterior problem follow from the aforementioned localization arguments and the consequences for compact manifolds follow from the methods for boundaryless manifolds developed by Burq, G\'erard, and Tzvetkov \cite{bgtmanifold}.  In short, the crucial idea in \cite{blairrls} is that the refined estimates \eqref{rlsschrod} in some sense allow one to control the errors which arise through certain microlocalizations of the solution, and therefore estimates can be obtained by the aforementioned wave packet parametrix in \cite{bssschrod}.  A similar program was carried out for asymptotically flat Schr\"odinger and wave equations by Tataru \cite{tataruschrod} and Metcalfe and Tataru \cite{metcalfetataru}, developing a family of refined local smoothing bounds for such equations, then showing that these bounds can be using to estimate the error terms arising from a wave packet parametrix construction.

The purpose of the present work is twofold, one is to prove \eqref{rlsschrod} as a consequence of similar bounds for the wave equation, thus verifying the crucial hypothesis in \cite{blairrls}.  The second is to prove theorems for the wave equation analogous to the second half of Theorem \ref{thm:blairrls}, ones which will yield \eqref{wavestz} for scale invariant triples which are subcritical with respect to the Knapp example $\frac 2p+\frac{n-1}{q} < \frac{n-1}{2}$.  The arguments apply equally well to the subcritical square function estimates, which replace the Strichartz norm by $L^q( M;L^2_t([-1,1]))$ and yield $L^q$ bounds on spectral clusters/quasimodes determined by $\Delta_g$ for compact manifolds.

A significant degree of progress is therefore made in establishing the family of estimates surrounding the Stein-Tomas restriction theorem to the setting of strictly concave boundaries when Neumann conditions are imposed.    It is expected that the full range of these local estimates should hold for Neumann conditions, and that the restriction to subcritical triples is an artifact of the methods presented here.  Our results also apply to Dirichlet conditions, but the full range of estimates are already known in this setting.

It is not known if the scale invariant Strichartz and square function estimates can be obtained using the Melrose-Taylor diffractive parametrix for homogeneous Neumann conditions.  The main obstacle seems to be that the so-called ``Neumann operator", which amounts to the inverse of the Dirichlet to Neumann map, is smoothing of order 2/3 instead of order 1, see the manuscript of Melrose and Taylor \cite[Ch. 8]{mt}.  An interesting feature of the present work is that the proof of the localized energy estimates under consideration rely on regularity estimates for boundary traces due to Tataru \cite{tatarubdry}. They not only show that the Dirichlet traces associated to the Neumann problem are smoother on the order of 5/6 instead of 2/3, but also show that by incorporating regularity away from the characteristic set (in spaces of $X^{s,b}$ type) one can ``make up" the full derivative.

We next state a useful bound which is intrinsic in that frequency localization can be characterized by the functional calculus.  First note that by the normal neighborhood theorem, the inward unit normal vector field $N$ extends to a neighborhood of $\prtl  M$ such that $N$ is tangent to any geodesic intersecting the boundary orthogonally.  We observe that when applying this vector field, solutions exhibit an even stronger gain of $2^{-j/2}$ after restricting to the collar.
\begin{theorem}[Intrinsic localized energy estimates]\label{thm:intrinsic}
Suppose $\prtl M$ is strictly concave and compact, $u_\mu(t,\cdot) \in L^2(M)$ for $|t|\leq 1$, and that for some $\beta\in C_c^\infty(\mu/2,2\mu)$, $u_\mu(t,\cdot) = \beta(\mu^{-2}\Delta_g)u_\mu(t,\cdot)$. Then for some implicit constant independent of $\mu$, $j$, we have that for any  $2^{-j} \in[\mu^{-2/3},1]$
\begin{multline}\label{intrinsiclocnrg}
\mu2^{\frac j4}\|u_\mu\|_{L^2(S_{<j})}+2^{\frac j4}\|\nablatx u_\mu\|_{L^2(S_{<j})} + 2^{\frac j2}\|N u_\mu\|_{L^2(S_{<j})} \lesssim \\
\mu\|u_\mu(0,\cdot)\|_{L^2(M)}+\|\nabla_{t,x} u_\mu(0,\cdot)\|_{L^2(M)} + \|(D_t^2-\Delta_g) u_\mu\|_{L^1([-1,1];L^2( M))}
\end{multline}
where $\nabla_{t,x} u_\mu$ denotes the (intrinsically defined) space time gradient $(\prtl_t u_\mu, \nabla_g u_\mu)$.
\end{theorem}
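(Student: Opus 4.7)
The plan is to obtain \eqref{intrinsiclocnrg} by a positive commutator argument in Fermi coordinates adapted to $\prtl M$, with strict concavity driving the commutator positive on the collar $S_{<j}$, and with the boundary contributions at $\prtl M$ absorbed using Tataru's boundary-trace regularity estimates \cite{tatarubdry} (this being what makes the Neumann case accessible). After a spatial partition of unity and the classical interior energy estimate, matters reduce to a neighborhood of a fixed point of $\prtl M$, where in Fermi coordinates $(x_1,x')$ with $x_1 = d(x,\prtl M)$ the wave operator reads $P = D_t^2 - D_1^2 - R(x_1,x',D_{x'}) + \text{l.o.t.}$, and strict geodesic concavity becomes $\prtl_{x_1} r|_{x_1=0} > 0$ on the glancing set $\{p=0,\,\xi_1=0\}$, where $r$ is the principal symbol of $R$.

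I would then test $Pu_\mu$ against $Au_\mu$ in $L^2_{t,x}$, where $A$ is a self-adjoint spatial pseudodifferential multiplier with symbol
\[
a(x,\xi) = 2^{j/2}\chi(2^j x_1)\,b(x,\xi),
\]
$\chi\geq 0$ a fixed bump with $\chi\equiv 1$ on $[-1,1]$, and $b$ a zeroth-order symbol concentrated near the glancing region. The standard spacetime identity combined with the Poisson bracket $\{p,a\} = 2\xi_1\prtl_{x_1}a + \{r,a\}_{x',\xi'} - (\prtl_{x_1}r)\prtl_{\xi_1}a$ is to be arranged so that the choice of $b$ makes the term $-(\prtl_{x_1}r)\prtl_{\xi_1}b$ dominate and positive on glancing. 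Sharp G{\aa}rding then yields a lower bound on the commutator piece of order $\mu^2 2^j \|u_\mu\|^2_{L^2(S_{<j})} + 2^j\|\nablatx u_\mu\|^2_{L^2(S_{<j})}$, which after rearrangement produces the first two terms on the left-hand side of \eqref{intrinsiclocnrg}. The inhomogeneous term on the right enters through a Cauchy--Schwarz pairing of $Pu_\mu$ with $Au_\mu$, the $L^\infty_t L^2_x$ norm of the latter being bounded by the classical energy estimate.

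I expect the space-boundary contributions at $\prtl M$, arising from integration by parts in $x_1$, to be the main obstacle. For Dirichlet conditions $u_\mu|_{\prtl M} = 0$ and only the normal-derivative trace appears, which is controlled by classical square-function estimates of Lee--Sogge type. For Neumann conditions a Dirichlet trace $u_\mu|_{\prtl M}$ appears, and the Melrose--Taylor diffractive parametrix only gains $2/3$ of a derivative, short of the full derivative that matching the $\mu^2 2^j$ scale requires; this is the obstruction hinted at in the introduction. Tataru's boundary-trace theorem \cite{tatarubdry} resolves it by upgrading the gain to $5/6$ of a derivative on the Dirichlet trace and supplying the remaining $1/6$ via $X^{s,b}$-type regularity away from the characteristic set, together absorbing these terms into the right-hand side of \eqref{intrinsiclocnrg}.

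Finally, the stronger factor $2^{j/2}$ on $\|N u_\mu\|_{L^2(S_{<j})}$ reflects that on a glancing wave packet localized at depth $2^{-j}$ one has $|\xi_1|\sim \mu 2^{-j/2}$, so the normal derivative inherits an additional gain of $2^{-j/2}$ beyond the $2^{-j/4}$ predicted by the collar length. This is captured by rerunning the commutator argument with $a$ replaced by a symbol proportional to $\xi_1 a$, equivalently testing against $D_1 A u_\mu$; the modification shifts the above lower bound to one of order $\mu^2 2^{2j}\|N u_\mu\|^2_{L^2(S_{<j})}$, yielding the $2^{j/2}$ gain upon dividing by $\mu^2$.
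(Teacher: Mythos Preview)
Your outline has the two correct ingredients---a positive commutator argument exploiting strict concavity, and Tataru's trace regularity to absorb the boundary terms---and these are exactly what drive the paper's argument. But the commutant you propose is too crude, and the Poisson-bracket computation as stated does not close.

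First, a scaling check: you claim a lower bound of order $\mu^2 2^{j}\|u_\mu\|^2_{L^2(S_{<j})}$, whereas the theorem only asserts $(\mu 2^{j/4})^2=\mu^2 2^{j/2}$ there. More seriously, with $a=2^{j/2}\chi(2^j x_1)b$ the term $2\xi_1\prtl_{x_1}a$ in $\{p,a\}$ contains $2^{3j/2}\chi'(2^j x_1)\xi_1 b$. On a glancing packet at depth $\sim 2^{-j}$ one has $|\xi_1|\sim \mu 2^{-j/2}$, so this contribution is of size $\mu 2^{j}$, the \emph{same} order as the term $-(\prtl_{x_1}r)\prtl_{\xi_1}b$ you want to keep---and $\chi'$ has no sign. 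On transverse rays it is larger still. You give no mechanism to absorb it, and a sharp spatial cutoff at scale $2^{-j}$ cannot produce a sign-definite commutator here.

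The paper's commutant is structurally different. After an FIO conjugation to the normal form $D_s^2-D_{x_1}D_{x_2}-s\tilde R$ (so glancing becomes $\xi_1/\xi_2+s=0$), it takes the \emph{first-order} operator $(QD_s)^w$, with tangential symbol
\[
q(s,x,\xi)=\psi(s)\Big(C_1\lambda^{-2/3}+\tfrac{\xi_1}{\xi_2 r}\zeta+s\Big)^{-1/2}\beta_\lambda(\xi),
\]
where $\zeta$ smoothly separates the hyperbolic region $\xi_1/\xi_2+s\gtrsim s$ from glancing, and $\psi(s)=C_2+\int_0^s\alpha(t)(C_1\lambda^{-2/3}+t)^{-1}\,dt$ is built from a slowly varying weight $\alpha$ in the sense of \cite{tataruschrod}. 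The $D_s$ factor lets one replace $D_s^2u$ by $(D_{x_1}D_{x_2}+s\tilde R)u+Pu$ inside $[D_s^2,QD_s]$, converting the dangerous normal contribution into a tangential one with controllable sign; the smooth $s$-dependence of $q$ (no cutoff at scale $2^{-j}$) together with the $\zeta$-splitting is what makes the resulting symbol bounded below by $\alpha(s)(C_1\lambda^{-2/3}+s)^{-1/2}\xi_2^2$. The slowly-varying trick then recovers the $2^{j/4}$ gain for each fixed $j$. The $2^{j/2}$ gain on $D_s u$ does not require a second run with $\xi_1 a$: it drops out of the same identity, via the term $\psi'(s)\tilde Q|D_s u|^2$, which already carries the stronger weight $(C_1\lambda^{-2/3}+s)^{-1}$.

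Finally, the paper does not prove Theorem~\ref{thm:intrinsic} by a direct commutator argument at all: \S\ref{sec:intrinsic} reduces it, via a Littlewood--Paley decomposition in coordinates and Coifman--Meyer type commutator bounds, to the coordinate estimate Theorem~\ref{thm:maincoordestimate}, which is where the commutator argument just described actually lives.
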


As a consequence, we will show space time bounds for the wave equation \eqref{waveeqn} of Strichartz and square function type. The latter estimates originate in \cite{mss}, which proved these estimates for boundaryless manifolds.  When $M$ is compact, square function estimates imply $L^q(M)$ bounds on spectral clusters or quasimodes determined by $\Delta_g$.  More precisely, the eigenvalues of the nonnegative operator $\Delta_g$ can be written as an increasing sequence $\{\lambda^2_l\}_{l=1}^\infty$ with $\lambda_l \geq 0$.  Given $\lambda \geq 1$, define $\chi_\lambda$ as the operator which projects functions in $L^2( M)$ onto the subspace of eigenfunctions with the frequency of vibration satisfying $\lambda_l\in [\lambda,\lambda+1]$.  Functions in the range of $\chi_\lambda$ thus form approximate eigenfunctions in that $\|(\Delta_g-\lambda^2)\chi_\lambda\|_{L^2(M)\to L^2(M)} = O(\lambda)$.

\begin{theorem}[Intrinsic dispersive estimates]\label{thm:sqfcn}
Suppose $\prtl  M$ is strictly concave and compact.  If $u$ solves \eqref{waveeqn}, then the Strichartz estimates \eqref{wavestz} are satisfied for any scale invariant, subcritical triple $(p,q,\gamma)$ so that \eqref{wavescale} holds and $\frac 2p +\frac{n-1}{q} < \frac{n-1}{2}$.  Moreover, for any $\frac{2(n+1)}{n-1}<q\leq \infty$, we have the square function bounds
\begin{equation}\label{sqfcn}
\left\| \left(\int_{-1}^{1} |u(t,\cdot)|^2dt\right)^{\frac 12} \right\|_{L^q( M)} \lesssim \|f\|_{H^\gamma( M)} + \|g\|_{H^{\gamma-1}( M)}, \qquad \gamma = \frac{n-1}{2}-\frac{n}{q},
\end{equation}
provided the right hand side is finite.  Consequently, for any $\frac{2(n+1)}{n-1}<q\leq \infty$
\begin{equation}\label{spclus}
\|\chi_\lambda\|_{L^2( M) \to L^q( M)} \lesssim_q \lambda^{\frac{n-1}{2}-\frac{n}{q}}.
\end{equation}
\end{theorem}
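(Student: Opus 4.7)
The strategy mirrors the template of \cite{blairrls} for the Schr\"odinger equation: combine a wave packet parametrix (accurate away from glancing) with the refined localized energy bounds of Theorem \ref{thm:intrinsic} to dominate the errors that concentrate near the boundary. The first reduction is via Littlewood-Paley together with a half-wave decomposition $u = u^+ + u^-$: it suffices to establish the semiclassical Strichartz bound
\[
\|u_\mu^\pm\|_{L^p([-1,1];L^q(M))} \lesssim \mu^\gamma \bigl(\|u_\mu(0,\cdot)\|_{L^2(M)} + \mu^{-1}\|\prtl_t u_\mu(0,\cdot)\|_{L^2(M)}\bigr)
\]
for each dyadic frequency $\mu$, with $\gamma$ given by \eqref{wavescale}, and then sum in $\mu$ via the usual $L^p([-1,1];L^q(M))$ square function inequality.

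For a fixed $\mu$, apply the wave packet parametrix of \cite{bssbdrystz}, \cite{smithsogge07} in a finite family of coordinate patches covering $\prtl M$, expressing $u_\mu^\pm$ as a superposition of tube-localized packets transported by the generalized bicharacteristic flow. Packets whose angle of incidence with $\prtl M$ is bounded away from tangency satisfy pointwise dispersive estimates, yielding the subcritical $L^pL^q$ bound for that portion of the decomposition. The remaining contribution is microlocalized to the glancing region and, after a dyadic decomposition of the Airy-scale collar over $\{2^{-j}:\mu^{-2/3}\leq 2^{-j}\leq 1\}$, the associated error $R_j u_\mu$ can be dominated (via the packet structure together with Sobolev embedding in the time--normal variables) by a combination of $\|u_\mu\|_{L^2(S_{<j})}$, $\|\nablatx u_\mu\|_{L^2(S_{<j})}$, and $\|N u_\mu\|_{L^2(S_{<j})}$. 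Theorem \ref{thm:intrinsic} supplies the gains $2^{-j/4}$ and $2^{-j/2}$ (the latter when the error carries a normal derivative) that are just enough to render the dyadic sum over $j$ absolutely convergent under the subcritical hypothesis $\tfrac{2}{p}+\tfrac{n-1}{q}<\tfrac{n-1}{2}$. The principal obstacle is making the split into a ``dispersive'' piece and a ``near-glancing'' remainder precise enough that the latter is genuinely controlled by the right side of \eqref{intrinsiclocnrg}, and in particular that the sharper $2^{-j/2}$ gain for $Nu_\mu$ is invoked exactly for those error components involving a normal derivative.

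The square function bound \eqref{sqfcn} follows from the same scheme, with $L^p_tL^q_x$ replaced by $L^q_xL^2_t$: Minkowski's inequality passes the $L^q$ norm inside the packet superposition, each individual packet enjoys an analogue of the estimate in \cite{mss}, and the errors are absorbed by Theorem \ref{thm:intrinsic} exactly as above. Finally, \eqref{spclus} is deduced from \eqref{sqfcn} applied to $u(t,x)=(\cos(t\sqrt{\Delta_g})\chi_\lambda f)(x)$: since $\int_{-1}^{1}\cos(t\lambda_k)\cos(t\lambda_l)\,dt$ is essentially diagonal for $\lambda_k,\lambda_l\in[\lambda,\lambda+1]$, an orthogonality argument yields the pointwise comparison $|(\chi_\lambda f)(x)|^2 \lesssim \int_{-1}^{1}|u(t,x)|^2\,dt$, at which point \eqref{sqfcn} delivers the stated spectral cluster bound with the claimed exponent.
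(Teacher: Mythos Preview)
Your outline is correct and matches the paper's strategy: wave packet parametrix plus refined localized energy estimates near the boundary, with the subcritical margin $\sigma>0$ providing the geometric convergence of the dyadic sum over collar scales. Two implementation points differ from the paper and are worth flagging. First, the paper does not route through the intrinsic estimate of Theorem~\ref{thm:intrinsic}; instead it passes to coordinates, reduces Theorem~\ref{thm:sqfcn} to the time-dependent coordinate bound of Theorem~\ref{thm:wavestz}, and invokes the coordinate localized energy estimate of Theorem~\ref{thm:maincoordestimate} directly. The precise mechanism for ``making the split precise'' is an explicit nontangential/tangential decomposition $u_\lambda=v_{J_\alpha}+\sum_j(v_j+w_j)$ that pairs a spatial cutoff at distance $2^{-j}$ with a normal-frequency cutoff at $|\xi_n|\lesssim\lambda 2^{-j/2}$; the Smith--Sogge parametrix then yields bounds with a gain $2^{-j\sigma/2}$ on each piece. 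Second, the sharper $2^{-j/2}$ normal-derivative gain is not actually used in the paper's argument in the way you suggest: the $\partial_{x_n}$ arising in the commutator $[P_\lambda,\psi_j]$ is instead absorbed by the frequency localization $\Gamma_{<j}$ (which bounds $\partial_{x_n}$ by $\lambda 2^{-j/2}$), so only the $2^{j/4}$ gain on $\|u_\lambda\|_{L^2(S_{<j})}$ is needed to close. Your derivation of \eqref{spclus} from \eqref{sqfcn} via $\cos(t\sqrt{\Delta_g})\chi_\lambda f$ and near-orthogonality is equivalent to the paper's, which applies \eqref{sqfcn} to $e^{it\lambda}\chi_\lambda f$ and handles the forcing by Duhamel.
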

The work of Smith and Sogge \cite{smithsogge07} establishes scale invariant squarefunction bounds for a smaller range of $q$, but allows for both conditions \eqref{BCs} and only assumes that $\prtl \Omega$ is $C^\infty$ and compact, that is, no concavity assumption is used.  When Dirichlet conditions are imposed and $\prtl \Omega$ is strictly concave, the estimates \eqref{sqfcn}, \eqref{spclus} are known up to the endpoint $q=\frac{2(n+1)}{n-1}$ given results of Grieser \cite{grieser} and Smith and Sogge \cite{SmSoLpreg}.  The present results thus expand the range of known exponents in these estimates when Neumann conditions are imposed.   It is now standard that \eqref{spclus} is a consequence of \eqref{sqfcn}.  For example, one can apply the bound \eqref{sqfcn} to $u(t,\cdot)=e^{it\lambda}\chi_\lambda f(\cdot)$ and use Duhamel's formula to estimate the source term.  Given constructions of spectral clusters which behave like the spherical harmonics, \eqref{spclus} is sharp for $q$ in the given range $\frac{2(n+1)}{n-1}<q\leq \infty$.  Also, interpolation with trivial $L^2$ bounds yields estimates at the Stein-Tomas endpoint
\[
\|\chi_\lambda\|_{L^2( M) \to L^{\frac{2(n+1)}{n-1}}( M)} \lesssim_\veps \lambda^{\frac{n-1}{2(n+1)}+\veps}
\]
for every $\veps>0$.  While this is not the sharp bound, it is still significant from the standpoint of Bochner-Riesz means.  Indeed, the method in \cite{soggebr} shows that this and \eqref{spclus} are enough to yield convergence of means of index $\delta > n|1/2-1/q|-1/2$ when $\max(q,q') \in [\frac{2(n+1)}{n-1},\infty]$.

\begin{remark}
In proving Theorem \ref{thm:sqfcn}, we will assume that $\gamma\in [0, 1]$.  To see that this is sufficient for the square function estimates \eqref{sqfcn}, note that if we can prove the theorem for $q$ sufficiently close to $\frac{2(n+1)}{n-1}$ then we will have $\gamma$ sufficiently close to $\frac{n-1}{2(n+1)} = \frac 12-\frac{1}{n+1}$.  Interpolating with the sharp $q=\infty$ bounds in \cite{smithsogge07} then yields the full range of estimates.
To see this for the Strichartz estimates \eqref{wavestz}, note that the subcritical condition $\frac 2p + \frac{n-1}{q} < \frac{n-1}{2}$ implies that $\gamma>\frac{n+1}{2}(\frac 12-\frac 1q)$.  When $n=2,3$, this quantity is strictly less than 1, so it suffices to establish estimates for $\gamma \in [0, 1]$ as the remaining family of estimates will follow by Sobolev embedding.  When $n\geq 4$, note that the Sobolev regularity associated to the endpoint $(p,q)=(2,\frac{2(n-1)}{n+3})$ is $\gamma = \frac 12 +\frac{1}{n-1}\in [0, 1)$. Therefore, once subcritical estimates in a deleted neighborhood of this endpoint are established, interpolation with the trivial $p=\infty$ case yields the full family of subcritical estimates.
\end{remark}

\subsection{Estimates in coordinates}\label{ss:coordintro}
Theorem \ref{thm:sqfcn} states that the scale-invariant, subcritical Strichartz bounds are satisfied in the time independent setting \eqref{waveeqn}, \eqref{wavestz}.  Our method for obtaining Strichartz bounds and localized energy estimates applies to wave equations in time dependent settings as well, but we will only state them in a non-intrinsic, coordinate dependent fashion.  Suppose $P$ is a strictly hyperbolic operator on $ \RR^{n+1}$ with principal symbol $p(x,\xi)=\sum_{i,j=0}^{n}g^{ij}(x)\xi_i\xi_j$ where $g^{ij}$ and its inverse determine quadratic forms of signature $(1,n)$.  Thus we are using $x=(x_0,x_1,\dots,x_n)$ to denote coordinates in $\RR^{n+1}$, and $x_0$ will play the role of the time variable.  Suppose $\Omega \subset \RR^{n+1}$ is a smooth domain such that the boundary $\prtl \Omega$ is time like with respect to $g^{ij}$.  We assume that the boundary is \emph{diffractive}, that is, given any local defining function $\phi$ for $\prtl \Omega$, we have $H_p^2\phi >0$.  We consider solutions to the wave equations satisfying the boundary conditions \eqref{BCs}, but note that normal vector field $N$ is defined with respect to the Lorentzian metric $g^{ij}$.

Near any point in $ \prtl \Omega$, there exists a coordinate system such that, after multiplying by a $P$ by a harmless smooth function, the operator takes the form
\begin{equation}\label{Pstz}
 P u = D_{x_n}^2u - \sum_{i,j=0}^{n-1} D_{x_i}(g^{ij}(x)D_{x_j}u) + \text{ lower order terms},
\end{equation}
(see \cite[Corollary C.5.3]{hormander3}, \cite[Lemma 4.1]{tatarubdry}). Here $x_n$ is a defining function for the boundary, that is, $\prtl \Omega$ is identified with $x_n=0$ and $\Omega$ is identified with $x_n>0$ within the coordinate system.  We may assume that that $g^{00}(x)$ is uniformly bounded from below in the coordinate system (so that the surfaces $x_0=c$ are space like) and that the quadratic form $\sum_{i,j=0}^{n-1} g^{ij}(x)\eta_i \xi_j$ has signature $(1,n-1)$. Moreover, we assume that for some $\veps$ sufficiently small,
\begin{equation}\label{c0}
\|g^{ij}-\text{diag}(1,-1,\dots,-1)\|_{Lip} \leq \veps,
\end{equation}
which can be arranged more generally by restricting to a sufficiently small cube and applying a linear transformation in $(x_0,\dots,x_{n-1})$.  Denoting $\RR^{n+1}_+=\{x:x_n>0 \}$, we may also suppose that every point in $\{x:|x|_\infty \leq 2\}\cap\RR^{n+1}_+$ identifies with a point in $\prtl \Omega$ (with $|x|_\infty = \max_{0 \leq i \leq n} |x_i|$).

\begin{theorem}[Time dependent Strichartz estimates]\label{thm:wavestz}
Suppose $u \in H^1(\overline{\RR^{n+1}_+})$ and $Pu\in L^2 (\overline{\RR^{n+1}_+})$ are compactly supported in $\{|x|_\infty \leq 1\}\cap\RR^{n+1}_+$ within the coordinate system in the preceding discussion with $u$ satisfying homogeneous Dirichlet ($u|_{x_n} = 0$) or Neumann ($\prtl_{x_n} u|_{x_n} =0$) conditions.  Then for any triple $(p,q,\gamma)$ satisfying $\frac 1p+\frac nq=\frac n2-\gamma$, the subcritical condition $\frac 2p+\frac{n-1}{q} < \frac{n-1}{2}$, and $\gamma \in [0,1]$ we have
\begin{equation}\label{coordstz}
\|u\|_{L^p([-1,1]_{x_0};W^{1-\gamma,q}(\RR^{n}_+))}
\lesssim \|u\|_{H^1(\overline{\RR^{n+1}_+})} + \|Pu\|_{L^2 (\overline{\RR^{n+1}_+})}.
\end{equation}
\end{theorem}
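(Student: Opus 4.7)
My plan is to combine a frequency decomposition with the wave packet parametrix of Smith--Sogge \cite{smithsogge07} and Blair--Smith--Sogge \cite{bssschrod}, and to use Theorem \ref{thm:intrinsic} to control the parametrix errors near the glancing set, in close analogy with the argument in \cite{blairrls} for the Schr\"odinger equation. Because we already know the estimate for $q=\infty$ (Sobolev) and for the energy space, one may interpolate at the end; the substantive work is proving the estimate at a single subcritical endpoint.

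First I would perform a Littlewood--Paley decomposition $u=\sum_\lambda u_\lambda$ adapted to $P$ with frequency $\sim\lambda$. Duhamel's formula together with the energy inequality reduces the problem to a semiclassical Strichartz bound on each $u_\lambda$: namely, $\|u_\lambda\|_{L^p_{x_0}L^q_{x'}}\lesssim \lambda^{\gamma-1+\frac 1p}(\|u_\lambda\|_{H^1}+\lambda^{-1}\|Pu_\lambda\|_{L^2})$ over a time slab of length $\lambda^{-1}$, after which the claimed estimate on $[-1,1]$ is recovered by summing $\lambda$ such slabs in $p$-th power and then reassembling in $\lambda$ using the square-function Littlewood--Paley inequality in $L^pL^q$, valid for $p,q>2$. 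By finite propagation speed one may simultaneously localize $u_\lambda$ to a spatial cube of side $\lambda^{-1}$.

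Next, for each such frequency-localized, spatially localized piece I would apply the wave packet parametrix of \cite{smithsogge07}, \cite{bssschrod}, which decomposes $u_\lambda$ into tubes of length $1$ and cross-section $\lambda^{-1/2}$ concentrated along bicharacteristics of $P$ that reflect transversally off $\prtl\Omega$. The $L^pL^q$ estimate on the parametrix itself follows from stationary phase on each tube together with a $TT^*$ argument, exactly as in \cite{bssschrod}. The residual error $E_\lambda u_\lambda$ comes from tubes whose bicharacteristics approach tangency, and is microlocally supported in a neighborhood of the glancing hypersurface. I would decompose this error dyadically in the angular distance $2^{-j/2}$ to tangency, equivalently in the collar distance $2^{-j}$, for $\lambda^{-2/3}\le 2^{-j}\le 1$. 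On the $j$-th piece, Theorem \ref{thm:intrinsic} yields the gains $2^{j/4}\|u_\lambda\|_{L^2(S_{<j})}$ and $2^{j/2}\|Nu_\lambda\|_{L^2(S_{<j})}$, which are precisely what is required to counteract the Knapp-type loss that a wave packet at angle $2^{-j/2}$ to the boundary would produce. Feeding the $j$-th error back into Duhamel against the parametrix bound and summing over $j$ (the resulting logarithmic factor is absorbed by the strict subcriticality $\tfrac 2p+\tfrac{n-1}q<\tfrac{n-1}2$) gives the full subcritical estimate on $u_\lambda$.

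The principal obstacle is the quantitative matching between the parametrix error at angular scale $2^{-j/2}$ and the collar gains in \eqref{intrinsiclocnrg}. One must check that the $j$-th piece of $E_\lambda u_\lambda$, together with the boundary traces generated after integration by parts, is actually controlled in the $L^1L^2$ norm on the right-hand side of \eqref{intrinsiclocnrg}; the normal-derivative gain $2^{j/2}\|Nu_\lambda\|_{L^2(S_{<j})}$ does the job for the trace terms, but verifying this cleanly requires the refined boundary trace regularity of Tataru \cite{tatarubdry} alluded to in the introduction, and a careful phase-space partition distinguishing transversal, glancing, and elliptic regimes within each $S_{<j}$.
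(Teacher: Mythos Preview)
Your overall architecture---Littlewood--Paley reduction, wave packet parametrix from \cite{smithsogge07}, \cite{bssbdrystz}, a dyadic decomposition near the glancing set indexed by $2^{-j}$, and localized energy estimates to close the loop---is indeed the paper's strategy. But two concrete missteps would prevent the argument from going through as written.

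First, the reduction to time slabs of length $\lambda^{-1}$ and spatial cubes of side $\lambda^{-1}$ is a Schr\"odinger reflex that is wrong for the wave equation. The wave group has propagation speed $O(1)$, so on a $\lambda^{-1}$ time interval there is essentially no dispersion: the best you can do is Bernstein/Sobolev, which gives $\lambda^{n(\frac12-\frac1q)}$ rather than the Strichartz exponent $\lambda^\gamma=\lambda^{n(\frac12-\frac1q)-\frac1p}$, and summing $\lambda$ slabs in $\ell^p$ does not recover the missing $\lambda^{-1/p}$. In the paper the frequency-localized reduction (Theorem~\ref{thm:coordstz}) is stated on the full unit time interval, and the relevant slab scale that enters later is $\theta_j=2^{-j/2}$ (the angle to the boundary), not $\lambda^{-1}$. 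The decomposition is also not applied to a ``parametrix error $E_\lambda u_\lambda$'' but to $u_\lambda$ itself, writing $u_\lambda=v_{J_\alpha}+\sum_j(v_j+w_j)$ via simultaneous spatial cutoffs $\psi_j(x_n)$ and normal-frequency cutoffs $\Gamma_j(\xi_n),\Gamma_{<j}(\xi_n)$; the Smith--Sogge parametrix is then invoked piece by piece to produce the gain $\theta_j^\sigma$ with $\sigma>0$ coming from subcriticality, and the localized energy estimate handles the commutators $[P_\lambda,\Gamma_{<j}\psi_j]$ rather than a residual glancing error.

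Second, you invoke Theorem~\ref{thm:intrinsic}, but that result assumes a time-independent metric and frequency localization via the functional calculus of $\Delta_g$, neither of which is available in the time-dependent setting of Theorem~\ref{thm:wavestz}. The correct input is Theorem~\ref{thm:maincoordestimate}, the coordinate version with Fourier-side localization; in the paper's logical order Theorem~\ref{thm:intrinsic} is actually \emph{derived} from Theorem~\ref{thm:maincoordestimate} after the Strichartz estimates, not before. Relatedly, Tataru's trace regularity \cite{tatarubdry} is used inside the proof of Theorem~\ref{thm:maincoordestimate} (the positive commutator argument), not at the Strichartz stage; once the localized energy estimate is in hand, no further appeal to boundary traces is needed here.
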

Here the Sobolev space $W^{s,q}(\RR^{n}_+))$, with $\RR^n_+=\{(x_1,\dots,x_n):x_n>0\}$ and $s \in [0,2]$, is defined by interpolating $L^q(\RR^{n}_+)$ and $W^{2,q}(\RR^{n}_+)$.  We take the latter to be the space of functions $f$ with weak derivatives of up to order 2 in $L^q(\RR^{n}_+)$ satisfying $f|_{x_n} = 0$ or $\prtl_{x_n} f|_{x_n} =0$.  The space $H^1(\overline{\RR^{n+1}_+})$ is defined to be the restrictions of $H^1(\RR^{n+1})$ functions to $\overline{\RR^{n+1}_+}$, and the norm can be defined by taking a sum over the $L^2(\overline{\RR^{n+1}_+})$ norm of the function and its first order weak derivatives (cf. \cite[Corollary B.2.6]{hormander3}).  Characterizing the Strichartz estimates in this fashion allows us to avoid certain technicalities regarding the Sobolev regularity of odd and even extensions of solutions.

We further note that the Strichartz estimates in Theorem \ref{thm:sqfcn} are a consequence of Theorem \ref{thm:wavestz}.  Since bounds on an open set follow from results of Kapitanski \cite{kap} and Mockenhaupt, Seeger, and Sogge  \cite{mss}, it suffices to prove estimates for solutions supported in a sufficiently small set near the boundary. We may thus take coordinates as above and assume $u$ satisfies the hypotheses of Theorem \ref{thm:wavestz} by smoothly truncating the solution in time. Since $\Delta_g$ does not depend on $t$, the coordinate transformation can be taken independent of $x_0=t$.  Assuming that $\gamma \in [0,1]$, the fact that
\[
\|u\|_{H^1(\overline{\RR^{n+1}_+})} + \|Pu\|_{L^2(\overline{\RR^{n+1}_+})} \lesssim  \|f\|_{H^1(M)} + \|g\|_{L^2(M)}                                                                                                                                                                                                                                                                                                                                                                                                                                                                                                                                                                                                                                                                                                  \]
in this setting follows from energy estimates and that $H^1(M) \subset H^1(\overline{M})$ by elliptic regularity.  Moreover, elliptic regularity and the compact support of $f,g$ also means that $W^{2,q}(\RR^{n}_+)$ can be defined equivalently as the domain of $I+\Delta_g$ in $L^q(\RR^n_+)$ subject to the corresponding boundary condition.  Consequently, we have that $\|u(x_0,\cdot)\|_{L^q(\RR^n_+)} \approx \|(I+\Delta_g)^{\frac{\gamma-1}{2}}u(x_0,\cdot)\|_{W^{1-\gamma,q}(\RR^{n}_+)}$ and the bounds \eqref{wavestz} follow from applying \eqref{coordstz} to $(I+\Delta_g)^{\frac{\gamma-1}{2}}u$.  Similar considerations hold in showing \eqref{sqfcn}, here it also suffices to assume that $u$ is compactly supported in $|x|_\infty \leq 1$ within the same coordinate system and prove the following bound over what amounts to the vector valued space $W^{1-\gamma,q}(\RR^{n}_+;L^2(-1,1)_{x_0})$:
\begin{equation}\label{sqfcncoord}
 \left(\int_{\RR^n_+} \left(\int_{-1}^1 |(I+\Delta_g)^{(1-\gamma)/2}u(x)|^2dx_0 \right)^{\frac q2}dx' \right)^{\frac 1q} \lesssim \|u\|_{H^1(\overline{\RR^{n+1}_+})} + \|Pu\|_{L^2 (\overline{\RR^{n+1}_+})}.
\end{equation}

For the purpose of proving Theorem \ref{thm:wavestz}, we may also assume that the coordinate system has been extended to all of $\overline{\RR^{n+1}_+}$ so that $g^{ij}(x) = \text{diag}(1,-1,\dots,-1)$ for $|x|_\infty \geq 3$.  This extension can of course be done in a fashion which respects \eqref{c0}.  As consistent with the above, we suppose that in spite of the extension, the diffractive hypothesis holds for $|x|_\infty \leq 2$, that is, whenever $|(\xi_0,\dots,\xi_{n-1})|=1$, then $\sum_{i,j=0}^{n-1}\prtl_{x_n}g^{ij}(x)\xi_i\xi_j$ is uniformly bounded from below by a positive constant.  We can now state our localized energy estimates in this coordinate system, where the frequency localization occurs with respect to the Fourier transform on $\RR^{n+1}$ (thus trading the compact support of $u$ for rapidly decreasing tails).  In particular, the Fourier transform in $x_n$ will be defined by taking an odd or even extension of the function in the case of Dirichlet or Neumann conditions respectively.  Theorems \ref{thm:wavestz} and \ref{thm:intrinsic} will follow as a consequence.

\begin{theorem}[Localized energy estimates in coordinates]\label{thm:maincoordestimate}
Suppose $\lambda \gg 1$ is sufficiently large and that $u: \RR^{n+1} \to \mathbb{C}$ is such that the Fourier transform is supported where $\{(\xi',\xi_n): |\xi'|\approx \lambda, |\xi_n| \lesssim \lambda\}$ and satisfies 
\begin{equation}\label{balldecay}
\left| u_\lambda (x)\right| \lesssim
\lambda^{-N}|(x_1,\dots,x_n)|_\infty^{-N}\|u_\lambda(x_0,\cdot)\|_{L^2(\RR^n_+)}, \qquad |(x_1,\dots,x_n)|_\infty \geq 3/2.
\end{equation}
Assume further that $u$ satisfies Dirichlet or Neumann conditions ($u|_{x_n} = 0$ or $\prtl_{x_n} u|_{x_n} =0$).  Then for $j$ satisfying $1 \leq 2^{-j} \leq \lambda^{-\frac 23}$, we have the bound
\begin{equation}\label{maincoordestimate}
2^{\frac j4}\|Du_\lambda\|_{L^2(S_{<j})} + 2^{\frac j2}\|D_{x_n}u_\lambda\|_{L^2(S_{<j})} \lesssim \|Du_\lambda\|_{L^2([-2,2]\times\RR^n_+)} + \|Pu_\lambda\|_{L^2([-2,2]\times\RR^n_+)},
\end{equation}
where $D=(D_{x_0},D_{x_1},\dots,D_{x_n})$ is the full space time gradient in coordinates and
\[
S_{<j} = \{x=(x_0,\dots,x_n): x_0 \in [-1,1], 0 \leq x_n \leq 2^{-j} \}.
\]
\end{theorem}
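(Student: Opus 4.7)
The strategy is a positive commutator argument at the collar scale $2^{-j}$, combined with Tataru's sharp boundary trace estimates \cite{tatarubdry} to handle the Neumann case. Working in the coordinates where $P$ takes the form \eqref{Pstz}, write the principal symbol as $p(x,\xi) = \xi_n^2 - q_0(x,\xi')$ with $q_0 = \sum_{i,j<n} g^{ij}(x)\xi_i\xi_j$ of Lorentzian signature. The diffractive hypothesis $H_p^2(x_n)>0$ on the characteristic set is equivalent to $\prtl_{x_n} q_0>0$ at glancing points $\{x_n=0,\,\xi_n=0,\,q_0=0\}$, and by continuity persists in an $O(2^{-j})$ neighborhood in $x_n$ and an $O(\lambda^2 2^{-j})$ neighborhood in $q_0$. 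The spatial decay hypothesis \eqref{balldecay} justifies all the integrations by parts below.

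First I would decompose $u_\lambda$ microlocally in $(x,\xi')$ based on the size of $q_0/\lambda^2$ relative to $2^{-j}$. In the hyperbolic region $q_0/\lambda^2 \gtrsim 2^{-j}$, bicharacteristics cross $S_{<j}$ in normal time $\lesssim 2^{-j/2}$, so slicing $[-2,2]\times\RR^n_+$ into slabs of that thickness in $x_n$ and applying standard energy estimates yields \eqref{maincoordestimate} with the sharp factors; here the extra $2^{j/4}$ on $D_{x_n}u_\lambda$ comes from the ellipticity of $D_{x_n}$. In the elliptic region $-q_0/\lambda^2 \gtrsim 2^{-j}$, the operator $D_{x_n}^2 + |Q|$ is elliptic, so $u_\lambda$ decays exponentially in $x_n$ on scale $2^{-j}$ and \eqref{maincoordestimate} follows by direct integration.

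The main work lies in the glancing region $|q_0|/\lambda^2 \lesssim 2^{-j}$, where I would apply a positive commutator identity with a self-adjoint multiplier $A$ whose principal symbol near glancing has the form $a(x_n)\xi_n + b(x_n)$, with $a,b$ real and adapted to the collar scale $2^{-j}$:
\begin{align*}
2\,\text{Im}\langle P u_\lambda, A u_\lambda\rangle = \langle B u_\lambda, u_\lambda\rangle + \mathcal B(u_\lambda).
\end{align*}
The bulk operator $B$ has principal symbol $2 a'(x_n)\xi_n^2 + 2 b'(x_n)\xi_n + a(x_n)\prtl_{x_n} q_0(x,\xi')$; exploiting the diffractive sign $\prtl_{x_n} q_0>0$ and a careful choice of $a,b$, this symbol becomes positive and of size $\gtrsim 2^{j/2}\bigl(|\xi_n|^2 + \lambda^2 2^{-j}\bigr)\ind_{\{x_n \lesssim 2^{-j}\}}$. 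Cauchy--Schwarz and the frequency localization then control the LHS of \eqref{maincoordestimate} by the RHS, modulo the boundary contribution $\mathcal B(u_\lambda)$ on $\{x_n=0\}$ and routine lower-order errors arising from \eqref{Pstz} and from the symbol calculus at scale $\lambda$.

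The boundary term $\mathcal B(u_\lambda)$ is the delicate step and constitutes the main obstacle. For Dirichlet conditions only the Neumann trace $D_{x_n} u_\lambda|_{x_n=0}$ appears and classical hyperbolic trace inequalities suffice. For Neumann conditions only the Dirichlet trace $u_\lambda|_{x_n=0}$ appears, and the naive trace loses roughly $1/3$ of a derivative at glancing. I would recover this loss by invoking Tataru's sharp trace estimates \cite{tatarubdry}: the Neumann-to-Dirichlet map gains $5/6$ of a derivative at the characteristic set, and combining this with $X^{s,b}$-type regularity off the characteristic set makes up the missing full derivative. The hardest part will be verifying that the positive commutator construction is sharp enough and compatible with Tataru's trace framework in the Neumann case, while tracking all lower-order errors carefully, particularly as $2^{-j}$ approaches the Airy threshold $\lambda^{-2/3}$ where the phase-space geometry begins to violate a naive separation into hyperbolic/glancing/elliptic regimes.
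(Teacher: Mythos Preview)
Your overall strategy---a positive commutator argument with boundary terms controlled by Tataru's trace estimates \cite{tatarubdry}---matches the paper's. However, the implementation differs in ways that matter, and your version has a gap precisely at the point you flag as ``the hardest part.''

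The paper does \emph{not} perform a hard microlocal decomposition into hyperbolic/glancing/elliptic regions and then use a multiplier $a(x_n)\xi_n + b(x_n)$ with $a,b$ depending on $x_n$ alone. Instead it first conjugates $P$ by a unitary FIO in the tangential variables to a normal form $D_s^2 - D_{x_1}D_{x_2} - s\tilde R(s,x,D_x)$ resembling the Friedlander model, and then uses a single commutant $(QD_s)^w$ where $Q$ is an $s$-dependent \emph{tangential} pseudodifferential operator whose symbol $q(s,x,\xi) = \psi(s)\,\tilde q(s,x,\xi)$ depends on $\xi_1/\xi_2$ through a factor $(C_1\lambda^{-2/3} + \xi_1\xi_2^{-1}r^{-1}\zeta + s)^{-1/2}$. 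The cutoff $\zeta$ transitions smoothly between the hyperbolic regime (where $\xi_1/\xi_2 + s \gtrsim s$) and the glancing regime. This single commutant is designed so that the commutator symbol is positive across all regimes simultaneously, and the lower bound is established via the Fefferman--Phong inequality after checking the relevant symbols lie in suitable $S^m_{\rho,0}$ classes. The paper also replaces the fixed-$j$ analysis by a slowly varying sequence $\{\alpha_j\}$ and an associated smooth weight $\alpha(s)$, which packages all collar scales at once.

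The gap in your approach is this: a commutant depending on $x_n$ only cannot produce the required positivity uniformly across the glancing region, because the correct gain depends on the tangential symbol $q_0$ (equivalently $\xi_1/\xi_2$ after normal form), not just on $x_n$. If you try to compensate by first microlocalizing in $q_0/\lambda^2$ at scale $2^{-j}$, the cutoffs themselves are tangential $\Psi$DO's at the critical scale, and commuting them with $P$ near $2^{-j}\approx\lambda^{-2/3}$ generates errors as large as the main term---exactly the Airy-threshold breakdown you anticipate but do not resolve. The paper's device of building the $\xi'$-dependence into the commutant (after the normal form reduction, which makes the relevant defining function $\xi_1\xi_2^{-1}+s$ explicit) is what sidesteps this, and it is the main new idea you are missing.
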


\subsection{Organization of the paper} We begin with proving the refined localized energy estimates in coordinates from Theorem \ref{thm:maincoordestimate} in \S\ref{sec:locnrg}, which is actually the heart of the matter.  In \S\ref{sec:stz}, we prove Theorems \ref{thm:wavestz} and \ref{thm:sqfcn}, working in analogy to \cite{blairrls} to obtain the subcritical Strichartz and square function bounds as a consequence of \eqref{maincoordestimate}.  The final section is then dedicated to proving Theorem \ref{thm:intrinsic} and seeing that the estimates in the first half of Theorem \ref{thm:blairrls} (namely \eqref{rlsschrod}) follow as a corollary.

\subsection*{Acknowledgement} The author thanks Hart Smith for insightful discussions concerning this work.

\section{Localized energy estimates}\label{sec:locnrg}
In this section, we prove Theorem \ref{thm:maincoordestimate} by a positive commutator argument.  In executing this approach, we expect solutions microlocalized along a light ray to satisfy better estimates when the ray reflects nontangentially on the boundary as opposed to glancing.  At the same time, an examination of the diffractive Friedlander model $D_{x_n}^2-D_{x_1}D_{x_2}-x_nD_{x_2}^2$, $x_n>0$, shows that for solutions localized on the Fourier side to  $|\xi| \lesssim \xi_2 \approx \lambda$, the uncertainty principle means that one cannot localize to scales finer than $|\xi_n|\lesssim \lambda^{2/3}$ and $x_n \lesssim \lambda^{-2/3}$ along the characteristic set.  However, the composition of pseudodifferential operators which respect this localization will not exhibit gains in the symbolic calculus relative to higher order terms in the expansion.

Given these observations, a suitable candidate for the commutant will be one of the form $Q(x,x_n,D_{x'})D_{x_n}$, the composition of a differential operator in $x_n$ and a pseudodifferential operator acting in the tangential variables for each $x_n$.  The $x_n$ dependent operator $Q$ will then have to distinguish between the glancing and nontangential behaviors.  This can be accomplished by conjugating $P$ to a normal form which resembles the diffractive Friedlander model, at which point glancing behavior can then be determined by using the equation. In particular, if we were working with the Friedlander model, we use a defining function for the characteristic set of the $x_n$-dependent tangential operator $D_{x_1}D_{x_2}^{-1}+x_n$ to determine the intersection of the characteristic sets of $D_{x_n}^2-D_{x_1}D_{x_2}-x_nD_{x_2}^2$ and $D_{x_n}^2$.  This is inspired in part by the approach to boundary trace regularity in \cite{tatarubdry}.

\subsection{Reduction to a normal form}\label{ss:symplectic}
Here we treat $\lambda$ as fixed and hence drop the subscript $\lambda$ in the notation for $u$. Given that we use space time norms $|x_0|\leq 1$ and $|x_0|\leq 2$ on the left and right respectively, we may replace $u(x)$ by $\tilde{\phi}(x_0)u(x)$, where $\tilde{\phi}$ is a bump function supported in $(-2,2)$ and identically one on $[-1,1]$ so that it now suffices to replace the $x_0$ intervals on the left and right in \eqref{maincoordestimate} by $\RR$.  We then microlocalize $u$ further so that its space-time Fourier transform is localized near a cone $|\eta_0| \approx |(\eta_1,\dots,\eta_{n-1})|$.  Indeed, outside this region, elliptic regularity gains at least half a derivative over all of $\RR^{n+1}_+$, (see for example, \cite[Theorem 2.2.B]{tay91}).  Then $u$ can further be decomposed as the sum of two functions localized to $-\eta_0 \approx |(\eta_1,\dots,\eta_{n-1})|$ and $\eta_0 \approx |(\eta_1,\dots,\eta_{n-1})|$ respectively.  Without loss of generality, we may assume $u$ is localized to the latter region.

It is convenient to change the notation in the argument which follows since the normal variable plays a special role.  We thus use $s$ to refer to the normal variable (formerly $x_n$) and $\nu$ to refer to its dual variable.  The variables $x,y\in \RR^n$ will refer to the ``tangential variables" with $x=(x_1,\dots,x_n)$ so that the ``time'' coordinate, formerly denoted by $x_0$, is now denoted $x_n$.  Using $\eta,\xi$ to denote variables dual to $y,x$, the principal symbol of the operator $P$ may be written using the summation convention as
\[
\nu^2 - g^{ij}(s,y)\eta_i\eta_j
\]
and the normal variable $s$ is now written first.  Hence up to lower order terms, $P = D_s^2-g^{ij}(s,y)D_{y_i}D_{y_j}$.  Note that in our new notation, the Fourier support of $u$ is now concentrated where $\eta_n \approx |(\eta_1,\dots,\eta_{n-1})|\approx \lambda$.

\begin{remark}\label{rem:forcetrace} Before proceeding to the heart of the matter and conjugating our problem to a normal form, we pause to comment on the regularity of the boundary trace  $\mathcal{R}(Pu) := Pu|_{s=0}$ as it will be easier to show some estimates needed in Lemma \ref{lem:bdrytrace} at this stage.  Write $Pu = F_1+F_2$ where $F_1:=\beta_\lambda^N(Pu)$ where $\beta_\lambda^N=\beta_\lambda^N(D_{s})$ is a Fourier multiplier similar to the ones defined in below \S\ref{sec:stz}, but where we take the symbol to be identically one on a neighborhood of $|\nu| \leq C\lambda$ and vanishing outside of $|\nu|\geq 2C\lambda$ for some sufficiently large constant $C$.  As mentioned in prior to Theorem \ref{thm:maincoordestimate}, the multiplier can be defined by taking a harmless even/odd extension of $Pu$ to all of $\RR^{n+1}$ and this is the one step on in the proof which uses the localization of $\widehat{u}$ to $|\nu| \lesssim \lambda$ (equivalently $|\xi_n|\lesssim \lambda$ in the old notation).  We claim that the boundary traces $F_i(s,y)|_{s=0}$ satisfy
\begin{align}
\left(\int_{s=0} |\mathcal{R}(F_1)(y)|^2\,dy \right)^{\frac 12} &\lesssim \lambda^{\frac 12}\|F_1\|_{L^2(\RR^{n+1})}\lesssim \lambda^{\frac 12}\|Pu\|_{L^2(\RR^{n+1}_+)}\label{F1bound},\\
\|F_2\|_{H^{\frac 12}(\{s=0\})} &\lesssim \|F_2\|_{H^1(\RR^{n+1})} \lesssim \lambda\|Du\|_{L^2(\RR^{n+1}_+)} \label{F2bound}.
\end{align}
To see \eqref{F1bound}, recall the formula
$
\widehat{\mathcal{R}(F_1)}(\eta)=\int_{-\infty}^\infty \widehat{F_1}(\nu,\eta)\,d\nu,
$
where $\nu$ is the dual to the normal variable. We may restrict the domain of integration to $|\nu| \lesssim \lambda$, so that Cauchy-Schwarz and Plancherel yield \eqref{F1bound} as
\[
\int|\widehat{\mathcal{R}(F_1)}(\eta)|^2 \,d\eta\lesssim \lambda \int_{-\infty}^\infty |\widehat{F_1}(\nu,\eta)|^2\,d\nu\,d\eta.
\]
Turning to \eqref{F2bound}, Sobolev trace estimates mean  that we only need to show the second inequality. We regularize the coefficients of $P$ to frequencies less than $c\lambda$, for some sufficiently small $c$, and denote $P_\lambda$ as the result of replacing the coefficients of $P$ by their smooth counterparts.  This yields
\[
F_2=(1-\beta_\lambda^N)Pu = (1-\beta_\lambda^N)(P-P_\lambda)u
\]
as the Fourier transform of $P_\lambda u$ is supported where $|\nu|\leq 16\lambda$. Now let $g^{ij}_\lambda$ denote the regularization of $g^{ij}$, which satisfies $\|g^{ij}-g^{ij}_\lambda\|_{L^\infty}\lesssim \lambda^{-1}$ and $\|Dg^{ij}-Dg^{ij}_\lambda\|_{L^\infty}\lesssim 1$.  Since derivatives falling on $u$ yield a loss of $\lambda$, \eqref{F2bound} follows from
\begin{equation*}
\|(g^{ij}-g^{ij}_\lambda)D_{x_i}D_{x_j} Du\|_{L^2}+ \|(Dg^{ij}-Dg^{ij}_\lambda)D_{x_i}D_{x_j} u\|_{L^2}
\lesssim \lambda\|Du\|_{L^2}.
\end{equation*}
\end{remark}

As discussed above, a key idea in the proof of Theorem \ref{thm:maincoordestimate} is to apply a unitary Fourier integral operator which takes $P$ to a normal form resembling the diffractive Friedlander model.  This is inspired by the strategy in \cite{tatarubdry}, but in contrast to that work, we take a Fourier integral operator independent of $s$ (and $D_s$), which conjugates $g^{ij}(0,y)D_{x_i}D_{x_j}$ to a normal form rather than taking a family of such operators which depend on $s$.  For the immediate discussion, we denote $a$ as the quadratic form in $\eta$, $a(y,\eta)=g^{ij}(0,y)\eta_i\eta_j$.  Recall that the Fourier support of $u$ is supported in the conic region $\eta_n \approx |(\eta_1,\dots,\eta_{n-1})|$. Prescribing the data for the equation $\{a,b\}=0$ suitably, there exists a solution $b(y,\eta)>0$ which is homogeneous of degree one such that $b(y,\eta)\approx |\eta|$ .  As in \cite[Lemma 4.2]{tatarubdry}, we now set $\xi_1 = ab^{-1}$, $\xi_2 = b$, which extends to a homogeneous canonical transformation $(y,\eta)\mapsto (x,\xi)$ defined on the conic region $\eta_n \approx |(\eta_1,\dots,\eta_{n-1})|$ by \cite[Theorem 21.1.9]{hormander3}).  Using the summation convention in $i,j$, we have
\[
g^{ij}(s,y)\eta_i\eta_j = g^{ij}(0,y)\eta_i\eta_j + s \int_0^1 \prtl_s g^{ij}(st,y)\eta_i\eta_j\,dt = \xi_1 \xi_2 + s\tilde{r}(s,x,\xi).
\]
By the diffractive hypothesis $\tilde{r}(s,\cdot)\in S^2_{1,0}(\RR^{2n}_{x,\xi})$, defines an elliptic symbol satisfying for some $c_0>0$
\begin{equation}\label{c0lower}
r(s,x,\xi):=\xi_2^{-2}\tilde{r}(s,x,\xi)\geq c_0.
\end{equation}

Consequently, there exists a unitary Fourier integral operator $T:L^2(\RR^n_x) \to L^2(\RR^n_y)$ such that conjugating $P$ by this operator for each $s$ yields the pseudodifferential operator
\begin{equation}\label{conjugoperator}
D_s^2 - D_{x_1}D_{x_2}-s\tilde{R}(s,x,D)
\end{equation}
where $\tilde{R}$ is an elliptic operator with principal symbol $\tilde{r}(s,\cdot) \in S^0_{1,0}(\RR^{2n}_{x,\xi})$. Strictly speaking, we should add on an operator $R_1(s,x,D)$ to \eqref{conjugoperator} such that for each $s$, $R_1(s,\cdot)\in \text{Op}(S^1_{1,0})$, but the error here is harmless as it can be absorbed into $Pu$.

Recall that in the original coordinates, $\widehat{u}$ is supported in the region $\eta_n \approx |(\eta_1,\dots,\eta_{n-1})|\approx \lambda$.  Moreover, regarding as a $\xi_2$ a positive function of $y,\eta$, we have $\xi_2 \approx |\eta|$.  Therefore there exists a smooth bump function $\beta_\lambda(\xi)$ such that
\begin{equation}\label{betadef}
\supp(\beta_\lambda) \subset \{\xi\in \RR^n: |\xi_1| \lesssim \xi_2 \approx |\xi| \} \cap \{\xi\in \RR^n: |\xi| \approx \lambda \}
\end{equation}
and for each $s$,
\begin{equation}\label{betaerror}
\|(I-\beta_\lambda(D))T^{-1}u(s,\cdot)\|_{L^2(\RR^n)} \lesssim_N \lambda^{-N} \|u(s,\cdot)\|_{L^2(\RR^n)}.
\end{equation}
In the remainder of this section, we may now replace $u(s,\cdot)$ by $\beta_\lambda(D)T^{-1}u(s,\cdot)$ as this $T$ clearly preserves $L^2$ norms. As noted above, we may assume that $P$ is exactly the pseudodifferential operator in \eqref{conjugoperator}.

Set $J_\lambda = \lfloor \log_2 \lambda^{2/3} \rfloor$ and define the following variations on the $S_{<j}$ from above
\[
\tilde{S}_{j} = \{(s,x):\,2^{-j-1}\leq s \leq 2^{-j} \}, \qquad \tilde{S}_{<J_\lambda}= \{(s,x):\,0\leq s \leq 2^{-J_\lambda} \},
\]
the former being defined for for $1 \leq j < J_\lambda$.  By geometric summation, \eqref{maincoordestimate} is now a consequence of showing that for some implicit constant independent of $k$, we have
\begin{equation*}
 2^{\frac k2}\|D_{x_2}u\|_{L^2(\tilde{S}_k)}^2 + 2^{k}\|D_{s}u\|_{L^2(\tilde{S}_k)}^2 \lesssim \|Du\|_{L^2}^2 + \|Pu\|_{L^2}^2,
\end{equation*}
and that the same holds over $\tilde{S}_{<J_\lambda}$.  In either case, we assume that the $L^2$ norms on the right are taken over $\RR^{n+1}_+$. We now introduce the method of slowly varying sequences of Tataru \cite[\S4]{tataruschrod}.  It suffices to show that
\begin{multline}\label{alphajbound}
\sum_{j=1}^{J_\lambda} a_j \left(\|(\lambda^{-\frac 23} + s)^{-1/4}D_{x_2}u\|_{L^2(\tilde{S}_j)}^2+ \|(\lambda^{-\frac 23} + s)^{-1/2}D_{s}u\|_{L^2(\tilde{S}_j)}^2  \right)\\
\lesssim \|Du\|_{L^2}^2 + \|Pu\|_{L^2}^2
\end{multline}
where $\{a_j\}_{j=1}^{J_\lambda} $ is defined by $a_j= \delta_{jk}$ and the $j=J_\lambda$ term in the sum is understood to involve the norm over $L^2(\tilde{S}_{<J_\lambda})$. Let $\delta>0$ be sufficiently small but fixed.  A positive sequence is said to be \emph{slowly varying} if
\begin{equation}\label{slowvary}
\left| \log_2 \alpha_j - \log_2 \alpha_{j-1}\right|\leq \delta , \qquad \text{equivalently,}\qquad
2^{-\delta}\leq \frac{\alpha_j}{\alpha_{j-1}} \leq 2^\delta.
\end{equation}
We claim that any sequence of the form $a_j = \delta_{jk}$ above can be dominated by a slowly varying sequence satisfying $\alpha_{J_\lambda} =1$.  Indeed, for $1 \leq j \leq J_\lambda$, we may define
\[
\alpha_j = 2^{-|j-k|\delta} + (1-2^{-(J_\lambda-k)\delta})2^{-(J_\lambda-j)\delta},
\]
which satisfies the desired requirements.  It now suffices to prove \eqref{alphajbound} with $a_j$ replaced by $\alpha_j$.  The $\alpha_j$ will also satisfy $\sum_{j=1}^{J_\lambda}\alpha_j \lesssim 1$, with implicit constant independent of $k$.

We now associate a smooth function $\alpha(s)$ for $s>0$  to the sequence $\{ \alpha_j\}$ with the properties
\[
\alpha(s) \approx \alpha_j, \quad\text{for } s\approx 2^{-j}\in[\lambda^{-\frac 23},1],
\]
\[
\alpha(s) \equiv 1 \quad \text{for } s \in [0,\lambda^{-\frac 23}],
\]
\[
|\alpha^{(k)}(s)| \lesssim (\lambda^{-\frac 23}+s)^{-k}\alpha(s).
\]
Moreover, we may assume that \eqref{alphafcnvary} below is satisfied.  Such a function can be constructed by beginning with a step function which satisfies the first two requirements, then convolving with a bump function compactly supported in $|s| \ll 2^{-j}$ near the discontinuity at $s=2^{-j}$.  Recalling that $\sum_{j=1}^{J_\lambda}\alpha_j \lesssim 1$, we thus have that
for any $s \in [0,1]$ and any large constant $C$,
\[
\int_{0}^s \frac{\alpha(t)}{C\lambda^{-\frac 23} + t}\,dt \lesssim 1.
\]
This highlights the tradeoff in working with slowly varying sequences: multiplying by $\alpha(t)$ corrects the nonintegrability of $t \mapsto 1/t$ at the cost of obtaining an estimate $\ell^\infty_j$ rather than $\ell^p_j$ for some $p$.

Given that we may assume $\widehat{u}(s,\cdot)$ is localized in a cone (cf. \eqref{betaerror}),
\begin{equation}\label{freqloc}
\xi_2 \gg |(\xi_1, \xi_3, \dots, \xi_n)| \qquad \text{and} \qquad \xi_2 \approx \lambda
\end{equation}
the inequality \eqref{alphajbound} is now further reduced to
\begin{equation}\label{alphahalfspace}
\int_{\RR^{n+1}_+}\alpha(s)\left(\frac{|D_{x_2} u(s,x)|^2}{(C\lambda^{-\frac 23} + s)^{\frac 12}} +\frac{|D_s u(s,x)|^2}{C\lambda^{-\frac 23} + s} \right)\,dsdx \lesssim \|Du\|_{L^2}^2 + \|Pu\|_{L^2}^2.
\end{equation}
In the next section, we will use a positive commutator method to show a variation on this estimate which will suffice.

\subsection{The Positive Commutator Method}\label{ss:poscomm}
We are now in a position to define the commutant which will yield the desired energy estimates after commuting with $P$.  In this subsection, we assume that pseudodifferential operators are defined using the Weyl-quantization, so that operators with real symbols are self-adjoint, which we use without further reference.

Let $C_1$ and $C_2$ be large constants with $C_1 \gg C_2$ and take $\psi$ to be the function
\begin{equation}\label{psidefn}
\psi(s) := C_2+ \int_0^{s} \frac{\alpha(t)}{C_1\lambda^{-\frac 23}+t}\,dt.
\end{equation}
Recalling the definition of $c_0$ from \eqref{c0lower}, let $\tilde{\zeta}$ be an increasing, smooth cutoff on $\RR$ supported in $[-\frac {c_0}2,\infty)$ and identically one on $[-\frac {c_0}4,\infty)$. Also, let $\chi \in C^\infty(\RR)$ be supported in $[1,\infty)$ and identically one on $[2,\infty)$.
Next, let $\zeta$ be a smooth cutoff to the region
\[
\zeta(s,\xi_1,\xi_2) := \tilde{\zeta}\left(\frac{\xi_1}{\xi_2}s^{-1}\right)\chi(\lambda^{\frac 23}s)+\chi(\lambda^{-\frac 13}\xi_1)(1-\chi)(\lambda^{\frac 23}s).
\]
One feature of $\zeta$ is that for every $s \geq 2\lambda^{-\frac 23}$, it satisfies
\[
\frac{\xi_1}{\xi_2r(s,x,\xi)}\zeta+s \geq \frac 12 s, \qquad \text{equivalently}, \qquad \xi_1\xi_2\zeta+s\tilde{r}(s,x,\xi)\geq \frac 12 s\tilde{r}(s,x,\xi)
\]
where $r = \xi_2^{-2}\tilde{r}$ is defined in \eqref{c0lower}.  This shows that along the characteristic set of the operator in \eqref{conjugoperator}, $\zeta$ truncates away from $\nu/\xi_2=0$ on the scale of $s$, amounting to the ``hyperbolic" region when $0 \leq s \leq 2\lambda^{-2/3}$.  We further let
\begin{equation}\label{qtildedef}
\tilde{q}(s,x,\xi) :=\left(C_1\lambda^{-\frac 23} + \frac{\xi_1}{\xi_2r(s,x,\xi)}\zeta+s \right)^{-\frac 12}\beta_\lambda(\xi),
\end{equation}
where $\beta_\lambda(\xi) \in S^0_{1,0}(\RR^{2n}_{x,\xi})$ is the smooth cutoff defined in \eqref{betadef}.  Given these definitions, we let $q(s,x,\xi):= \tilde{q}(s,x,\xi) \psi(s)$ let $Q$ be the operator acting on functions $w(s,y)$ on $\RR^{n+1}_+$ defined by
\begin{equation}\label{fourierintegral}
Qw(s,x) = \frac{1}{(2\pi)^n}\iint e^{i(x-y)\cdot\xi} q\Big(s,\frac{x+y}{2},\xi\Big)w(s,y)\,dyd\xi,
\end{equation}
so that $Q$ acts as an $s$-dependent pseudodifferential operator in the $x$ variables.  Also let $\tilde{Q}$ be the operator defined by the same integral but with $q$ replaced by $\tilde{q}$, so that $Q$ is the result of multiplying $\tilde{Q}$ by the function $\psi(s)$.

We begin with a discussion of the regularity of the symbols here.  It is verified that $\zeta(s,\cdot)\in S_{1/3,0}^{0}$ as each differentiation in $\xi_1$ gains a power of $(\lambda^{\frac 13}+\xi_2s)^{-1}$ while each differentiation in the other directions results in a stronger gain of a power of $\xi_2^{-1}$.  More generally, we have that for $k\geq 1$, $\prtl_{s}^k\zeta(\RR^{2n}_{x,\xi})$, are supported in $s \geq \lambda^{-\frac 23}$ and $|\xi_1\xi_2^{-1}|^k \prtl_{s}^k\zeta \in S_{1/3,0}^{0}(\RR^{2n}_{x,\xi})$.

We next observe that for each $s$, $q(s,\cdot)$ is a symbol in $S^{1/3}_{1/3,0}(\RR^{2n}_{x,\xi})$ with respect to the tangential variables.  Indeed, rewriting $q$ as
\[
q(s,x,\xi)=\psi(s)\xi_2^{\frac 12}\left(C_1\lambda^{-\frac 23}\xi_2 + \xi_1r^{-1}\zeta+\xi_2 s \right)^{-\frac 12}\beta_\lambda(\xi),
\]
it can be seen that differentiating $q$ in $\xi_1$ gains a power of
\[
\left(C_1\lambda^{-\frac 23}\xi_2 + \xi_1r^{-1}\zeta+\xi_2 s \right)^{-1} \lesssim (\lambda^{\frac 13} + \xi_2s)^{-1}
\]
while in the other directions we have a stronger gain of $|\xi|^{-1}\approx \xi_2^{-1}\approx \lambda^{-1}$. The symbols could be described more precisely using the weight vectors introduced by Beals \cite{bealscalculus}, or the $S(m,g)$ classes of H\"ormander \cite{hormanderweyl}, though this is not needed due to the frequency localization of the problem.  Moreover, the regularity of the symbol improves as $s$ increases.

More generally, it can be verified that for each $s$,
\begin{equation}\label{qregularity}
\psi^{(k_1)}(s)\prtl_s^{k_2} \tilde{q}(s,\cdot) \in S_{\frac 13,0}^{\frac 13(k_1+k_2+1)}(\RR^{2n}_{x,\xi}),
\end{equation}
with the same gains when differentiating in $\xi_1$ and $\xi_2, \dots, \xi_n$ so that $\prtl_s q(s,\cdot)$ and $\prtl_s^2 q(s,\cdot)$ define symbols in $S^1_{\frac 13,0}$ and $S^{\frac 53}_{\frac 13,0}$ respectively.  However, the regularity improves when multiplying by a power of $s$, for example, for each $s$
\begin{equation}\label{qregimproved}
s^{\frac 12}q(s,\cdot)\in S_{\frac 13,0}^{0}(\RR^{2n}_{x,\xi}), \qquad s \prtl_s q(s,\cdot)\in S_{\frac 13,0}^{\frac 13}(\RR^{2n}_{x,\xi}), 
\end{equation}

We consider a pseudodifferential operator of the form
\[
(QD_s)^w u  = \frac 12 Q D_s  u + \frac 12 D_s (Q u) = QD_s u + \frac 12 (D_s Q) u ,
\]
where the superscript $w$ in $(QD_s)^w$ is used to emphasize that we take the Weyl quantization of $q(s,x,\xi)\cdot\nu $, resulting in a differential operator in the $s$ variable.  We also remark that $(D_s Q) $ denotes the pseudodifferential operator with symbol $D_s q$ (equivalently $[D_s,Q]$), and a similar convention will be taken below (e.g. the operator $\prtl_s^2Q$ in \eqref{secondsidebdry} below is the pseudodifferential operator with symbol $\prtl_s^2q$). The positive commutator strategy is thus to prove suitable upper and lower bounds on
\begin{equation}\label{commutator}
\Re \left\langle i[P,(QD_s)^w] \,u, u \right\rangle,
\end{equation}
where $\langle \cdot,\cdot \rangle$ denotes the standard inner product on $L^2(\RR^{n+1}_+)$.

\begin{remark}
The choice of symbol $q$ determined by \eqref{qtildedef} can be motivated by considering the diffractive Friedlander model, the differential operator with symbol $p = \nu^2-\xi_1\xi_2-s\xi_2^2$, though the discussion here will restrict attention to the first term in the integrand on the left in \eqref{alphahalfspace}.  If one takes the ansatz that the commutant should take the form $q(s,\xi_1,\xi_2)\nu$ (which would be suitable for this model), then formally the principal symbol on the of the commutator in \eqref{commutator} is
\[
H_p\left( q(s,\xi)\nu \right) = 2\nu^2\prtl_s q(s,\xi) + \xi_2^2 q(s,\xi)
=\xi_2^2\left[2\left(\frac{\xi_1}{\xi_2}+s\right)\prtl_s q(s,\xi) +  q(s,\xi)\right].
\]
where the second identity follows by using that $\nu^2=\xi_1\xi_2+s\xi_2^2$ along the characteristic set. Here we factor out $\xi_2^2$ as it yields the derivative $D_{x_2}$ which will fall on $u$.  The method of integrating factors gives that the solution to $2(\frac{\xi_1}{\xi_2}+s)\prtl_s q +  q = f(s,\xi_1,\xi_2)$ when $\frac{\xi_1}{\xi_2}+s>0$ is
\begin{equation}\label{qansatz}
q(s,\xi_1,\xi_2) = \frac 12\left(\frac{\xi_1}{\xi_2}+s\right)^{-1/2} \int_0^s  \left(\frac{\xi_1}{\xi_2}+t\right)^{-1/2} f(t,\xi_1,\xi_2)\,dt ,
\end{equation}
up to terms independent of $s$. Recalling \eqref{alphahalfspace}, we want to choose $f$ such that $f \gtrsim \alpha(s)(C_1\lambda^{-2/3}+s)^{-1/2}$.  Taking equality here satisfies this objective, but the approach taken so far only works away from the zero set of $\xi_1\xi_2^{-1}+s$, or equivalently $\nu=0$.  Selecting \[
f(s,\xi_1,\xi_2)=\frac{(\frac{\xi_1}{\xi_2}+s)^{\frac 12}\alpha(s)}{C_1\lambda^{-\frac 23}+s}
\]
means that the integral in \eqref{qansatz} is independent of $\xi_1,\xi_2$ and is effective provided $\xi_1/\xi_2+s \gtrsim C_1\lambda^{-2/3}+s$. At the same time, since $|\nu| = (\xi_1/\xi_2+s)^{1/2}|\xi_2|$ along the characteristic set, we should have suitable upper bounds on the commutator since this formally means $QD_s \lesssim D_{x_2}$.  At the other extreme, when $\xi_1/\xi_2+s$ is small in that $\xi_1/\xi_2+s\leq 2 (C_1\lambda^{-2/3}+s)$, taking $q =\psi(s)(C_1\lambda^{-2/3}+s)^{-\frac 12}$ independent of $\xi_1$, $\xi_2$ with $\psi$ as in \eqref{psidefn}, it can be verified that $H_p(q\nu) \gg \alpha(s)(C_1\lambda^{-2/3}+s)^{-1/2}$.  The choice of $q$ determined by \eqref{qtildedef} is thus the result smoothly transitioning between these two extremes by employing the $\zeta$ cutoff while respecting the uncertainty principle.  Below it will be seen that introducing the $\zeta$ into the symbol presents acceptable error.
\end{remark}

\subsubsection{Upper bounds on the commutator} We first prove upper bounds on \eqref{commutator}, which does not use the structure of the commutator, but instead we integrate by parts to dominate the expression by $\|Du\|_{L^2}^2 + \|Pu\|_{L^2}^2$.  Begin by noting the following integration by parts formulae (with $\int_{s=0}f\,dx$ denoting the integral of the function $x\mapsto f(s,x)|_{s=0}$ over $\RR^n$):
\[
\langle D_s v, w \rangle = \langle v, D_s w \rangle +i \int_{s=0}v\overline{w}\,dx
\]
\begin{equation*}
\langle (QD_s)^w v, w \rangle = \langle v, (QD_s)^w w \rangle + i \int_{s=0} v \overline{Qw}\,dx = \langle v, (QD_s)^w w \rangle + i \int_{s=0} Qv \overline{w}\,dx
\end{equation*}
\[
\langle D_s^2 v, w \rangle = \langle v, D_s^2 w \rangle + \int_{s=0}\prtl_s v\overline{w}\,dx-\int_{s=0}v\overline{\prtl_s w}\,dx.
\]
Thus since $P-D_s^2$ is an operator acting in the tangential variables, we use the boundary condition as appropriate to obtain
\begin{align*}
\langle P(QD_s)^w u, u \rangle &= \langle (QD_s)^w u, Pu \rangle + \int_{s=0}\prtl_s (QD_s)^wu\overline{u}\,dx - \int_{s=0}(QD_s)^wu\overline{\prtl_s u}\,dx,\\
\langle (QD_s)^wP u, u \rangle
&= \langle P u, (QD_s)^wu \rangle + i \int_{s=0}Pu\overline{Qu}\,dx.
\end{align*}
Combining the two thus yields
\begin{multline}\label{energysetup}
\left\langle i[P,(QD_s)^w] \,u, u \right\rangle =   2\Im \langle Pu, (QD_s)^w u\rangle \\ -i\int_{s=0}(QD_s)^wu\overline{\prtl_s u}\,dx + i\int_{s=0}\prtl_s (QD_s)^wu\overline{u}\,dx + \int_{s=0}Pu\overline{Qu}\,dx.
\end{multline}

To control the boundary integrals, we need the following theorem of D. Tataru:
\begin{theorem}[\cite{tatarubdry}]\label{thm:tatarubdry}
Suppose $u \in H^1(\overline{\RR^{n+1}_+})$ and $Pu \in L^2(\RR^{n+1}_+)$.
\begin{enumerate}
 \item Suppose $u$ satisfies Dirichlet conditions, then the normal derivative satisfies
\begin{equation}\label{dirichletnormal}
  \|\prtl_su\|_{L^2(\{s=0\})} \lesssim \|u\|_{H^1} + \|Pu\|_{L^2}
\end{equation}
\item Suppose $u$ satisfies Neumann conditions.  Then
\begin{equation}\label{neumanntrace}
  \|u\|_{H^{\frac 56}(\{s=0\})} =\|\langle D_T \rangle^{\frac 56} u\|_{L^2(\{s=0\})} \lesssim \|u\|_{H^1} + \|Pu\|_{L^2}
\end{equation}
(where $\langle D_T \rangle$ denotes the tangential Sobolev weight/Fourier multiplier with symbol $(1+|\xi|^2)^{\frac 12}$).
Moreover, let $B$ denote the pseudodifferential operator with symbol  $b(\xi)=\langle \xi \rangle^{\frac 56} \langle \xi_2^{-\frac 13}\xi_1 \rangle^{\frac 14}$ and consider the $H^{1,\frac 56}$ spaces defined in \cite[p.195]{tatarubdry}, stating that $u\big|_{s=0} \in H^{1,\frac 56}$ if and only if $Bu \in L^2$.  Then (see p. 203 of that same work)
\begin{equation}\label{neumannnullform}
\|Bu\|_{L^2(\{s=0\})} = \|u\|_{H^{1,\frac 56}(\{s=0\})} \lesssim \|u\|_{H^1} +  \|Pu\|_{L^2}.
\end{equation}
\end{enumerate}

\end{theorem}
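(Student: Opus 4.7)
The plan is to treat the Dirichlet case as a warm-up for the more delicate Neumann analysis, with the diffractive glancing region being what forces the exponent $5/6$ in part (2).

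For part (1), I would use a Rellich--Morawetz type multiplier identity: pair $Pu$ with $\partial_s \bar u$ and integrate over $\RR^{n+1}_+$. Because Dirichlet data forces all tangential derivatives of $u$ to vanish on $\{s=0\}$, the boundary contributions coming from the tangential principal part $g^{ij}D_{y_i}D_{y_j}$ vanish there, and what survives is a nondegenerate quadratic form in $\partial_s u$ alone, namely a positive multiple of $\int_{s=0}|\partial_s u|^2\,dy$ by strict hyperbolicity. The interior terms are bounded by $\|u\|_{H^1}(\|u\|_{H^1}+\|Pu\|_{L^2})$ after a further integration by parts, and Cauchy--Schwarz with absorption closes the estimate.

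For part (2), the Neumann condition $\partial_s u|_{s=0}=0$ makes the multiplier $\partial_s\bar u$ useless, and the commutant must instead expose the tangential regularity of $u|_{s=0}$. I would microlocalize in $\eta$ into three regions. In the elliptic region (where $|\eta|^2$ dominates $g^{ij}(0,y)\eta_i\eta_j$) classical elliptic boundary regularity gives smoothness of the trace to any order. In the nontangential hyperbolic region, a Rellich identity with a tangential multiplier of the form $\chi(y)D_{y_k}u$ yields a full tangential derivative of gain on the trace.

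The main obstacle is the glancing region near the characteristic set of $p_0(y,\eta):=g^{ij}(0,y)\eta_i\eta_j$; this is where the $1/6$ loss relative to $H^1$ is generated. Here I would first pass to the Friedlander-type normal form $D_s^2 - D_{x_1}D_{x_2} - sR(s,x,D)$ as in \S\ref{ss:symplectic} and then run a positive commutator argument whose commutant is built to respect the Airy scaling $s\sim \lambda^{-2/3}$, $\xi_1 \sim \lambda^{1/3}$. The exponent $5/6 = 1/2 + 1/3$ then splits cleanly as a $1/2$ gain from a classical Rellich identity plus an additional $1/3$ coming from the diffractive Airy scale at the glancing set. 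The refined bound \eqref{neumannnullform} follows by inserting the extra weight $\langle \xi_2^{-1/3}\xi_1\rangle^{1/4}$ into the commutant, which is admissible precisely because the commutator becomes strictly stronger once $\xi_1$ is displaced off the glancing set. I expect the hardest step to be producing a clean pointwise lower bound on the principal symbol of the commutator in this regime while controlling the error terms generated by the $S^{m}_{1/3,0}$ calculus; this is the careful symbolic accounting carried out by Tataru in \cite{tatarubdry}.
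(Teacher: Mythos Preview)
This theorem is not proved in the present paper: it is quoted verbatim from Tataru \cite{tatarubdry} and used as a black box in the proof of Lemma~\ref{lem:bdrytrace}. The paper contains no argument for it beyond the parenthetical page references to \cite{tatarubdry}, so there is no ``paper's own proof'' to compare against.

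That said, your sketch is a reasonable outline of how Tataru's argument actually goes. The Dirichlet case via a Rellich identity with multiplier $\partial_s\bar u$ is standard and correct. For the Neumann case, the decomposition into elliptic, hyperbolic, and glancing regions, the reduction to a Friedlander normal form, and a positive-commutator argument respecting the Airy scaling $s\sim\lambda^{-2/3}$, $\xi_1\sim\lambda^{1/3}$ are indeed the ingredients of \cite{tatarubdry}. Your heuristic $5/6=1/2+1/3$ is morally right, though in Tataru's paper the $5/6$ emerges from an interpolation/dyadic summation over the glancing layers rather than as a literal sum of two separate gains. The refined estimate \eqref{neumannnullform} with the weight $\langle\xi_2^{-1/3}\xi_1\rangle^{1/4}$ is precisely the $H^{1,5/6}$ bound from p.~203 of \cite{tatarubdry}, and your intuition that the extra weight is admissible because the commutator strengthens off the glancing set is correct. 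If you wanted to supply a full proof rather than a sketch, the delicate part you identify---the symbol lower bound in the $S^m_{1/3,0}$ calculus---is exactly where the work lies, but for the purposes of the present paper this is all deferred to the citation.
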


For \eqref{neumannnullform}, we recall that the unitary transformation $T$ defined in \S\ref{ss:symplectic} agrees with the one in \cite{tatarubdry} when $s=0$.  We now use these results to obtain estimates on the boundary integrals needed in our argument.

\begin{lemma}\label{lem:bdrytrace}
There exists a uniform constant $C$ such that
\begin{equation}\label{dirichlettrace}
-\Re\left(\int_{s=0}i(QD_s)^wu\overline{\prtl_s u}\right) \leq  C\left(\|Du\|_{L^2}^2 + \|Pu\|_{L^2}^2\right) .
\end{equation}
Moreoever, the boundary traces of $u$ and its derivatives satisfy the following bounds:
\begin{align}
\left|\int_{s=0}u\overline{(\prtl_s^2Q)u}\right|&\lesssim \|Du\|_{L^2}^2 + \|Pu\|_{L^2}^2, \label{secondsidebdry}\\
\left|\int_{s=0}Pu\overline{Qu}\right|&\lesssim \|Du\|_{L^2}^2 + \|Pu\|_{L^2}^2, \label{forcetrace}\\
\left|\int_{s=0}i\prtl_s (QD_s)^wu\overline{u}\right| &\lesssim \|Du\|_{L^2}^2 + \|Pu\|_{L^2}^2, \label{maintrace}
\end{align}
\end{lemma}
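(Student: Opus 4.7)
The plan is to exploit the fact that the Dirichlet and Neumann boundary conditions kill different traces of $u$, which splits the four inequalities naturally. Under Dirichlet ($u|_{s=0}=0$) the integrands of \eqref{secondsidebdry}, \eqref{forcetrace}, and \eqref{maintrace} vanish outright, either directly or because $Qu|_{s=0}=0$ ($Q$ acts tangentially). For \eqref{dirichlettrace} the summand $\tfrac12(D_sQ)u$ in $(QD_s)^wu$ also drops on $s=0$, so using $iD_s=\prtl_s$ and the self-adjointness of $Q|_{s=0}$ (real symbol in Weyl quantization), the integrand reduces to $-\langle Q\prtl_su,\prtl_su\rangle_{L^2(\{s=0\})}$. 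Since $q(0,x,\xi)\geq 0$ on the frequency support cut out by $\beta_\lambda$, sharp G\aa rding in the exotic class $S^{1/3}_{1/3,0}$ gives $Q|_{s=0}\geq -CI$ on that support, so the inner product is bounded above by $C\|\prtl_s u|_{s=0}\|_{L^2}^2$. This is in turn controlled by the right hand side via \eqref{dirichletnormal}, using that at frequency $\lambda$ one has $\|u\|_{H^1}\approx \|Du\|_{L^2}$.

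Under Neumann ($\prtl_s u|_{s=0}=0$), \eqref{dirichlettrace} is trivial, and the remaining three bounds are proved using the refined trace estimates \eqref{neumanntrace} and \eqref{neumannnullform}. The decisive numerical balance is that at frequency $\lambda$ the $5/6$ gain converts to $\|u|_{s=0}\|_{L^2}\lesssim \lambda^{-5/6}(\|Du\|_{L^2}+\|Pu\|_{L^2})$, which offsets the symbolic losses of $Q$ and its normal derivatives at $s=0$. For \eqref{secondsidebdry}, $\prtl_s^2 q(0,\cdot)\in S^{5/3}_{1/3,0}$ yields $\|\prtl_s^2 Qu|_{s=0}\|_{L^2}\lesssim \lambda^{5/3}\|u|_{s=0}\|_{L^2}$, and two factors of $\lambda^{-5/6}$ absorb the loss. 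For \eqref{forcetrace} we decompose $Pu=F_1+F_2$ as in Remark \ref{rem:forcetrace}. The $F_1$ piece uses \eqref{F1bound}: $\|F_1|_{s=0}\|_{L^2}\|Qu|_{s=0}\|_{L^2}\lesssim \lambda^{1/2}\|Pu\|_{L^2}\cdot \lambda^{1/3-5/6}(\|Du\|_{L^2}+\|Pu\|_{L^2})$, of the required order. The $F_2$ piece uses tangential $H^{1/2}$--$H^{-1/2}$ duality on $\{s=0\}$: standard Sobolev trace and \eqref{F2bound} give $\|F_2|_{s=0}\|_{H^{1/2}}\lesssim \lambda\|Du\|_{L^2}$, while the tangential frequency localization of $Qu|_{s=0}$ to $|\xi'|\approx \lambda$ yields the stronger bound $\|Qu|_{s=0}\|_{H^{-1/2}}\lesssim \lambda^{-1/2}\|Qu|_{s=0}\|_{L^2}\lesssim \lambda^{-1}(\|Du\|_{L^2}+\|Pu\|_{L^2})$.

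For \eqref{maintrace} under Neumann, expand
\[
\prtl_s(QD_s)^wu=(\prtl_sQ)D_su+QD_s\prtl_su+\tfrac12 D_s(\prtl_sQ)u
\]
and evaluate at $s=0$. The first summand drops because $D_su=-i\prtl_su=0$; in the second, the equation $P=D_s^2-D_{x_1}D_{x_2}-s\tilde R$ yields $\prtl_s^2 u|_{s=0}=-(Pu+D_{x_1}D_{x_2}u)|_{s=0}$. Multiplying by $i$ and pairing with $\overline u|_{s=0}$, one obtains up to signs and the factor $1/2$: (a) $\int Q(Pu)\overline u|_{s=0}$, which equals $\int Pu\overline{Qu}|_{s=0}$ by self-adjointness of $Q|_{s=0}$ and so is controlled by \eqref{forcetrace}; (b) $\int\prtl_s^2 Q u\overline u|_{s=0}$, which is the integrand of \eqref{secondsidebdry}; and (c) a new term $\int QD_{x_1}D_{x_2}u\overline u|_{s=0}$. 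The new term is handled using \eqref{neumannnullform}: a direct symbol check shows $q(0,x,\xi)\,\xi_1\xi_2\lesssim b(\xi)^2$ on $\supp\beta_\lambda$, both in the low regime $|\xi_1|\leq\lambda^{1/3}$ (where $\zeta(0,\cdot)=0$, $q\approx \lambda^{1/3}$, $b^2\approx \lambda^{5/3}$) and in the high regime $|\xi_1|\gtrsim \lambda^{1/3}$ (where $q\approx (\xi_1/\xi_2)^{-1/2}$ and $b^2\approx \lambda^{5/3}\langle\xi_1\lambda^{-1/3}\rangle^{1/2}$). Writing $QD_{x_1}D_{x_2}=B(B^{-1}QD_{x_1}D_{x_2}B^{-1})B$, Calder\'on--Vaillancourt in the exotic class $S^0_{1/3,0}$ shows the middle factor is $L^2$-bounded modulo acceptable remainders, giving $|(\text{c})|\lesssim \|Bu|_{s=0}\|_{L^2}^2\lesssim (\|Du\|_{L^2}+\|Pu\|_{L^2})^2$ by \eqref{neumannnullform}. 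The main technical obstacle is precisely this symbolic-to-operator lifting in the Neumann case: verifying that the exotic Weyl calculus adapted to $S^{1/3}_{1/3,0}$ combined with the anisotropic weight $b(\xi)=\langle\xi\rangle^{5/6}\langle\xi_2^{-1/3}\xi_1\rangle^{1/4}$ produces remainders controlled by the same norms, which leans crucially on the frequency localization via $\beta_\lambda$ and the uncertainty constraints inherent in the Friedlander normal form.
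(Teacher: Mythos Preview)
Your proof is correct and follows essentially the same approach as the paper's own argument: the Dirichlet case reduces \eqref{dirichlettrace} to sharp G\r{a}rding for $q(0,\cdot)\in S^{1/3}_{1/3,0}$ combined with Tataru's normal trace bound \eqref{dirichletnormal}; in the Neumann case, \eqref{secondsidebdry} balances the $S^{5/3}_{1/3,0}$ order of $\prtl_s^2 q(0,\cdot)$ against two applications of \eqref{neumanntrace}, \eqref{forcetrace} uses the $F_1+F_2$ decomposition and bounds from Remark~\ref{rem:forcetrace}, and \eqref{maintrace} is reduced via the equation at $s=0$ to the previous two estimates plus the $QD_{x_1}D_{x_2}$ term, which is handled by verifying $B^{-1}(QD_{x_1}D_{x_2})B^{-1}\in\text{Op}(S^0_{1/3,0})$ and invoking \eqref{neumannnullform}. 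Your expansion of $\prtl_s(QD_s)^w u$ is written slightly differently from the paper's \eqref{maintraceexpn} but is equivalent, and your $H^{1/2}$--$H^{-1/2}$ duality for the $F_2$ piece is exactly the paper's observation that $\lambda\langle D_T\rangle^{-1/2}Q|_{s=0}\in\text{Op}(S^{5/6}_{1/3,0})$.
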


\begin{proof}
We first observe that \eqref{dirichlettrace} is only nontrivial if Dirichlet conditions are imposed.  In this case, since $q(0,\cdot) \in S_{1/3,0}^{1/3}$ is nonnegative, the sharp G\r{a}rding inequality (see e.g. \cite[Proposition 0.7B]{tay91}) shows that
\[
-\Re\left(\int_{s=0}i(QD_s)^wu\overline{\prtl_s u}\right)=-\Re\left(\int_{s=0}(Q\prtl_s u)\overline{\prtl_s u}\right) \lesssim \|\prtl_su\|_{L^2(\{s=0\})}^2
\]
The estimate \eqref{dirichlettrace} now follows from \eqref{dirichletnormal}.

The remaining bounds are only nontrivial if Neumann conditions are imposed, so this will be assumed for the remainder of the proof.  The inequality \eqref{secondsidebdry} is a consequence of \eqref{neumanntrace} and the fact that $\prtl^2_s q(0,\cdot) \in S^{5/3}_{1/3,0}(\RR^{2n}_{x,\xi})$:
\[
\left|\int_{s=0}u\overline{\prtl_s^2Qu}\right| \leq \|\langle D_T \rangle^{-\frac 56} (\prtl_s^2 Q)u\|_{L^2(\{s =0\})} \|\langle D_T \rangle^{\frac 56}  u\|_{L^2(\{s =0\})} \lesssim \|u\|_{H^{\frac 56}(\{s=0\})}^2.
\]

Turning to \eqref{forcetrace}, we recall the decomposition $Pu = F_1+F_2$ and bounds \eqref{F1bound} and \eqref{F2bound} from Remark \ref{rem:forcetrace}.  Both of these bounds are preserved by the action of the unitary operator defined in \S\ref{ss:symplectic}.  The frequency localization again gives that $\lambda^{1/2}Q|_{s=0} \in \text{Op}(S^{5/6}_{1/3,0})$ so that by Cauchy-Schwarz and \eqref{neumanntrace}, we have
\[
\left|\int_{s=0}F_1\overline{Qu}\right|\lesssim \|Pu\|_{L^2(\RR^{n+1}_+)}\| u\|_{H^{\frac 56}(\{s=0\})}\lesssim \|Du\|_{L^2}^2 + \|Pu\|_{L^2}^2.
\]
To handle $F_2$, we use \eqref{F2bound} and that $\lambda\langle D_T\rangle^{-\frac 12} Q|_{s=0} \in \text{Op}(S^{\frac 56}_{\frac 13,0})$:
\begin{equation*}
\left|\int_{s=0}F_2\overline{Qu}\right| \lesssim \|Du\|_{L^2(\RR^{n+1}_+)}\| u\|_{H^{\frac 56}(\{s=0\})} \lesssim \|Du\|_{L^2}^2 + \|Pu\|_{L^2}^2.
\end{equation*}

We finally turn to the bound in \eqref{maintrace}.  Observe that
\begin{equation}\label{maintraceexpn}
i\prtl_s (QD_s)^wu = \frac 12\prtl_s(Q\prtl_s u)+\frac 12 \prtl_s^2 (Qu)=Q\prtl_s^2u + \frac 32 (\prtl_sQ)\prtl_su + \frac 12 (\prtl_s^2Q)u.
\end{equation}
Given the boundary condition and \eqref{secondsidebdry}, we only need to treat the first term here which, by the equation $P\big|_{s=0} = D_s^2 -D_{x_1}D_{x_2}$, reduces to estimating
\[
\left|\int_{s=0} Q(D_{x_1}D_{x_2}u)\overline{u} \right|+\left|\int_{s=0} QPu\overline{u} \right|,
\]
and the second term is handled using \eqref{forcetrace}. The principal symbol of $QD_{x_1}D_{x_2}$ at $s=0$ is
\[
C_2\left(\lambda^{-\frac 23} +  \frac{\xi_1\chi(\lambda^{-\frac 13}\xi_1)}{\xi_2r(0,x,\xi)} \right)^{-\frac 12}\xi_1\xi_2\beta_\lambda(\xi).
\]
We now use the operator defined in \eqref{neumannnullform}, observing that the principal symbol of the triple composition $B^{-1}(QD_{x_1}D_{x_2}) B^{-1}$ is
\[
\langle \xi \rangle^{-\frac 53} \langle \xi_2^{-\frac 13}\xi_1 \rangle^{-\frac 12}\left(\lambda^{-\frac 23} +  \frac{\xi_1\chi(\lambda^{-\frac 13}\xi_1)}{\xi_2r(0,x,\xi)}\right)^{-\frac 12}\xi_1\xi_2\beta_\lambda(\xi) \in S^0_{\frac 13,0}(\RR^{2n}_{x,\xi}).
\]
Consequently,
\[
\left|\int_{s=0} Q(D_{x_1}D_{x_2}u)\overline{u} \right|=\left|\int_{s=0} B^{-1}(QD_{x_1}D_{x_2})B^{-1} (Bu)\overline{Bu} \right|\lesssim \|Bu\|_{L^2(\{s=0\})}^2,
\]
which by \eqref{neumannnullform}, concludes the proof of \eqref{maintrace}.
\end{proof}

Recalling that $\prtl_s q(s,\cdot) \in S_{1/3,0}^1(\RR^{2n}_{x,\xi})$, and the definition of $\tilde{Q}$ following \eqref{fourierintegral}, we have that
\begin{equation}\label{qweylbound}
\|(QD_s)^wu\|_{L^2(\RR^{n+1}_+)}\lesssim \|\tilde{Q}D_s u\|_{L^2(\RR^{n+1}_+)} + \|D u\|_{L^2(\RR^{n+1}_+)}.
\end{equation}
Consequently, given the boundary trace estimates, we now have that \eqref{energysetup} yields
\begin{equation}\label{realupperbound}
\Re \left\langle i[P,(QD_s)^w] \,u, u \right\rangle \lesssim \|\tilde{Q}D_s u\|_{L^2}\|Pu\|_{L^2} + \|Du\|_{L^2}^2+\|Pu\|_{L^2}^2
\end{equation}
with all $L^2$ norms on the right taken over $\RR^{n+1}_+$.  The first term on the right here will be treated in \eqref{normalupperbound} below.

\subsubsection{Lower bounds on the commutator} We now view $i[P,(QD_s)^w] $ as a positive operator, and compute the left hand side of \eqref{realupperbound} in a manner which reflects this.    First compute
\begin{multline*}
-i[s\tilde{R}+D_{x_1}D_{x_2},(QD_s)^w ]u=[Q\prtl_s+\frac 12(\prtl_s Q) ,s\tilde{R}+D_{x_1}D_{x_2} ]u=\\
Q\tilde{R}u +sQ (\prtl_s\tilde{R})u + [Q,D_{x_1} D_{x_2}+s\tilde{R}]\prtl_s u+\frac 12  [(\prtl_s Q),D_{x_1}D_{x_2}+s\tilde{R}]u.
\end{multline*}
Next we compute
\begin{align*}
[D_s^2, (QD_s)^w]u &= \frac 12\left(D_s^2(QD_s u)+ D_s^3(Qu) - QD_s^3 u - D_s(QD_s^2u)\right)\\
& = \frac 12\left([D_s^3,Q]u+[D_s^2,Q]D_s u-[D_s,Q]D_s^2u\right)\\
& = 2(D_sQ)D^2_su + 2(D_s^2Q)D_su +\frac 12(D_s^3Q)u,
\end{align*}
where the last equality is a consequence of expanding each commutator in the second line and collecting like terms. Using that $Q$ is the product of $\psi(s)$ and the operator $\tilde{Q}$, the last row here can be rewritten as
\[
2(D_s\psi)(s)\tilde{Q}D^2_su + 2\psi(s)(D_s\tilde{Q})D^2_su+ 2(D_s^2Q)D_su +\frac 12(D_s^3Q)u.
\]
For the second term here, we replace $D^2_su=(D_{x_1}D_{x_2} + s \tilde{R})u + Pu$ using the equation.  For the first term here we split its contribution in half, making the same replacement for half the terms, but not the other.  The result is
\begin{multline*}
\langle i[D_s^2,(QD_s)^w]u, u \rangle = \langle\psi'(s)\tilde{Q}D^2_su,u\rangle \\ + \left\langle\left(\psi'(s)\tilde{Q}+ 2\psi(s)(\prtl_s\tilde{Q})\right)(D_{x_1}D_{x_2} + s \tilde{R})u,u \right\rangle\\  - 2\langle (\prtl_s^2Q)\prtl_su,u\rangle -\frac 12\langle(\prtl_s^3Q)u,u\rangle + \left\langle\left(\psi'(s)\tilde{Q}+ 2\psi(s)(\prtl_s\tilde{Q})\right)Pu,u \right\rangle
\end{multline*}
However, we note that
\begin{align*}
\Re\langle (\prtl_s^2Q)\prtl_su,u\rangle
&= \langle (\prtl_s^2Q)\prtl_su,u\rangle - \langle \prtl_s(\prtl_s^2Qu),u\rangle + \int_{s=0}u\overline{\prtl_s^2Qu}\,dx\\
&= -\langle (\prtl_s^3Q) u,u\rangle + \int_{s=0}u\overline{\prtl_s^2Qu}\,dx
\end{align*}
which eliminates the $\prtl_s^2Q \prtl_su$ term at the cost of adding a boundary term. Moreover, after an integration by parts which generates trivial boundary terms, we obtain
\[
\langle\psi'(s)\tilde{Q}D^2_su,u\rangle = \langle\psi'(s)\tilde{Q}D_su,D_s u\rangle +
\langle (\psi''(s)\tilde{Q}+\psi'(s)\prtl_s\tilde{Q})\prtl_s u , u \rangle
\]
and the real part of the second term on the right hand can be reduced to a sum of terms of the form $\langle(\psi^{(k_1)}(s)\prtl_s^{k_2}\tilde{Q})u,u\rangle$, $k_1+k_2 =3$, the same argument used to eliminate $\Re\langle (\prtl_s^2Q)\prtl_su,u\rangle$.

In summary, using harmless coefficients $c_{2,k_1,k_2}$, $c_{3,k_1,k_2}$, we may write \eqref{commutator} as
\begin{multline}\label{positivesetup}
\Re\langle\psi'(s)\tilde{Q}D_su,D_s u\rangle+\sum_{i=1}^4\Re \left(\langle E_i u , u\rangle \right)\\ + \sum_{k_1+k_2=2}c_{2,k_1,k_2}\Re \left(\int_{s=0}\psi^{(k_1)}(0)(\prtl_s^{k_2}\tilde{Q})u\overline{u}\right)
\end{multline}
where
\begin{equation}\label{positiveterm}
E_1 u := (\psi'(s)\tilde{Q}+ 2\psi(s)(\prtl_s\tilde{Q}))(D_{x_1}D_{x_2} + s \tilde{R})u  +Q\tilde{R}u ,
\end{equation}
\begin{equation}\label{thirdderivterm}
E_2 u := \sum_{k_1+k_2=3}c_{3,k_1,k_2} \psi^{(k_1)}(s)\prtl_s^{k_2}\tilde{Q}u,
\end{equation}
\begin{equation}\label{energyterm}
E_3 u := sQ (\prtl_s\tilde{R})u +\frac 12 [(\prtl_s Q),D_{x_1}D_{x_2}+s\tilde{R}]u +2 (\prtl_s Q)Pu,
\end{equation}
\begin{equation}
E_4 u := [Q,D_{x_1} D_{x_2}+s\tilde{R}]\prtl_s u.
\end{equation}

The key lower bound on \eqref{commutator} follows from:
\begin{lemma}\label{thm:positivebound}
Given $E_1$, $E_2$ as defined in \eqref{positiveterm}, \eqref{thirdderivterm} we have that
\begin{equation}\label{positivebound}
\int_{s \geq 0 } \frac{\alpha(s)}{(C_1 \lambda^{-\frac 23} + s)^{\frac 12}}|D_{x_2} u(s,x)|^2\,dsdx
\lesssim  \Re \langle E_1 u, u \rangle + \Re \langle E_2 u, u \rangle + \|Du\|_{L^2}^2.
\end{equation}
\end{lemma}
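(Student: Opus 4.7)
The plan is to establish a pointwise symbol lower bound $e_1 \gtrsim \a(s)(C_1\l^{-2/3}+s)^{-1/2}\xi_2^2\beta_\l(\xi)^2$ for the principal symbol of $E_1$ as a tangential operator at each fixed $s$, then promote this to the operator inequality via the sharp G\r{a}rding inequality, with the $E_2$ term and the subprincipal composition remainders absorbed into $O(\|Du\|_{L^2}^2)$. Since $D_{x_1}D_{x_2}+s\tilde{R}$ has principal symbol $\xi_1\xi_2+s\tilde{r} = \xi_2^2 r\bigl(\frac{\xi_1}{\xi_2 r}+s\bigr)$, the principal symbol of $E_1$ factors as
\[
e_1(s,x,\xi) = \xi_2^2 r\,\Bigl[\bigl(\psi'(s)\tilde{q} + 2\psi(s)\prtl_s\tilde{q}\bigr)\bigl(\tfrac{\xi_1}{\xi_2 r}+s\bigr) + \psi(s)\tilde{q}\Bigr].
\]
This bracket is precisely the form anticipated by the integrating-factor ansatz in the remark preceding the lemma, applied to $q=\psi\tilde{q}$, up to small corrections from $\prtl_s r$, $\prtl_s\zeta$, and the $\zeta$ factor inside $\tilde{q}$.

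A region analysis on the support of $\beta_\l$ then furnishes the pointwise bound. In the hyperbolic-type region $\zeta=1$ with $\frac{\xi_1}{\xi_2 r}\geq C_1\l^{-2/3}$, the term $\psi'(s)\tilde{q}\cdot\bigl(\frac{\xi_1}{\xi_2 r}+s\bigr)$ alone dominates: using $\psi'=\a(s)/(C_1\l^{-2/3}+s)$ and $\frac{\xi_1}{\xi_2 r}+s \geq \hf(C_1\l^{-2/3}+\frac{\xi_1}{\xi_2 r}+s)$, this term is $\gtrsim \a(s)(C_1\l^{-2/3}+s)^{-1/2}\beta_\l$. In the complementary region where $\frac{\xi_1}{\xi_2 r}<C_1\l^{-2/3}$ (including the elliptic sub-region $\frac{\xi_1}{\xi_2 r}+s<0$ and the transition where $\zeta<1$), the $\zeta$ cutoff makes $\tilde{q}$ behave like $(C_1\l^{-2/3}+s)^{-1/2}\beta_\l$, and the standalone summand $\psi(s)\tilde{q}\geq C_2(C_1\l^{-2/3}+s)^{-1/2}\beta_\l$ supplies the lower bound after choosing $C_2$ sufficiently large compared to $\a(s)\leq 1$, independently of $C_1$. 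The sign pairing between $\psi'\tilde{q}+2\psi\prtl_s\tilde{q}$ and $\frac{\xi_1}{\xi_2 r}+s$ in the elliptic sub-region is favorable (both negative), so the product contributes positively rather than competes; contributions from $\prtl_s\zeta$, being supported in a narrow transition strip and carrying factors matching the symbol-class compensation in $\tilde{q}$, are absorbable into the dominant terms.

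Finally, sharp G\r{a}rding in the tangential variables, applied for each $s$ to the nonnegative symbol $e_1 - c\,\a(s)(C_1\l^{-2/3}+s)^{-1/2}\xi_2^2\beta_\l^2$ of exotic order at most $7/3$ in $S^{7/3}_{1/3,0}$, yields a remainder whose tangential order is one, and which is therefore bounded by $\|Du\|_{L^2}^2$ after integration in $s\geq 0$ and the frequency localization $\xi_2\approx\l$. The three-derivative symbols defining $E_2$ lie in $S^{4/3}_{1/3,0}$ by \eqref{qregularity}, improved further by powers of $s$ via \eqref{qregimproved}, and together with the subprincipal remainders from the Weyl composition of the operators in $E_1$, produce errors of the same order that are absorbed analogously. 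The main obstacle will be the careful accounting of these exotic-class corrections in the transition region between $\zeta=1$ and $\zeta=0$: derivatives of $\zeta$ generate terms with symbol-class loss on the order of $\l^{1/3}$ at the boundary of their support, and one must verify that these losses are balanced by the compensating factors of $s$ and $|\xi_1|/\xi_2$ coming from the structure of $\tilde{q}$ and from $r$ being uniformly bounded below by $c_0$.
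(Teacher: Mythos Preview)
Your overall architecture---pointwise lower bound on the principal symbol of $E_1$, then a G\r{a}rding-type inequality---matches the paper's, and your region analysis is in the right spirit. However, two quantitative steps fail as written.

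\textbf{The $E_2$ term cannot be absorbed into $\|Du\|_{L^2}^2$.} The symbols $\psi^{(k_1)}(s)\prtl_s^{k_2}\tilde q$ with $k_1+k_2=3$ are \emph{not} uniformly in $S^{4/3}_{1/3,0}$; each $s$-derivative costs a factor of $(C_1\lambda^{-2/3}+s)^{-1}$, so at $s\lesssim\lambda^{-2/3}$ these symbols are of size $\lambda^{7/3}$ (see \eqref{thirdderivsymbol}), which exceeds $\lambda^2\approx|\xi|^2$. Thus $|\langle E_2u,u\rangle|$ is not controlled by $\|Du\|^2$. The paper instead proves that the $E_2$ symbol is \emph{pointwise dominated by the positive symbol} \eqref{poissonsimp}, and this comparison is delicate: it requires the slowly-varying property \eqref{alphafcnvary} of $\alpha$ to balance the loss $(\lambda^{2/3}s)^{-2}$ against $\alpha(s)(\lambda^{2/3}s)^{1/2}$ when $s\gg\lambda^{-2/3}$.

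\textbf{Sharp G\r{a}rding is too weak here.} After subtracting the target lower bound and factoring out $\xi_2^2$, the residual symbol $b$ satisfies only $b\in S^1_{1/3,0}$ (cf.\ \eqref{bapprox}: in the $\zeta=0$ region, $b\approx(C_1\lambda^{-2/3}-\xi_1/\xi_2)(C_1\lambda^{-2/3}+s)^{-3/2}$ can be as large as $\lambda$). The full symbol $\xi_2^2 b$ is therefore in $S^3_{1/3,0}$, not $S^{7/3}_{1/3,0}$, and sharp G\r{a}rding produces a remainder of order $3-(\rho-\delta)=3-\tfrac13=\tfrac83$, i.e.\ $\lambda^{8/3}\|u\|^2\approx\lambda^{2/3}\|Du\|^2$, which is unacceptable. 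Even Fefferman--Phong, which gains $2(\rho-\delta)=\tfrac23$, is borderline insufficient for $b\in S^1_{1/3,0}$. The paper handles this by a further splitting: when $s\gtrsim\lambda^{-1/2}$ one has $b\in S^1_{1/2,0}$, while for $s\ll\lambda^{-1/2}$ one writes $b=b_1+b_2$ with $b_1\in S^1_{1/2,0}$ (supported where $\zeta=0$, so $\prtl_{\xi_1}$ gains a full $\lambda^{-1}$) and $b_2\in S^{2/3}_{1/3,0}$ (supported where $|\xi_1|\lesssim\lambda^{1/2}$, hence bounded by $\lambda^{2/3}$). Fefferman--Phong then applies to each piece.
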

Before proving the lemma, we discuss how it concludes the proof of \eqref{alphahalfspace}.  We first claim that
\begin{equation}\label{e3bound}
\sum_{k_1 + k_2 = 2}\left|\int_{s=0}\psi^{(k_1)}(0)\prtl_s^{k_2}\tilde{Q}u\overline{u}\right|+\left| \langle E_3 u , u\rangle \right| \lesssim \|Du\|_{L^2}^2 + \|Pu\|_{L^2}^2.
\end{equation}
The boundary trace is estimated using \eqref{secondsidebdry}.  Since $\prtl_sq \in S^{1}_{1/3,0}$, we have
\[
|\langle(\prtl_s Q)Pu,u\rangle|=|\langle Pu,(\prtl_s Q)u\rangle| \lesssim \|Pu\|_{L^2}\|Du\|_{L^2}
\]
which bounds the last term in \eqref{energyterm}. Next we observe that the symbolic calculus and \eqref{qregularity}, \eqref{qregimproved} means that the remaining operators in $E_3$ are in $\text{Op}(S^2_{1/3,0})$, which handles these terms by frequency localization.  In particular, the symbol of $[(\prtl_s Q),D_{x_1}D_{x_2}  ]$ does not involve any derivatives of the form $\prtl_{\xi_1}\prtl_s q$.

We next turn to the term $\langle E_4u, u \rangle$, claiming that
\begin{equation}\label{e4bound}
\left| \langle E_4 u , u\rangle \right| \lesssim \|\tilde{Q}\prtl_s u\|_{L^2}\|Du\|_{L^2}+\|Du\|_{L^2}^2.
\end{equation}
But this follows from the symbolic calculus, which allows us to write
\[
E_4 = [Q,D_{x_1} D_{x_2}+s\tilde{R}] = B_1\tilde{Q} +B_2
\]
for some $B_1,B_2 \in \text{Op}(S^{1}_{1/3,0})$ for each $s$, with symbol bounds uniform in $s$.

Now define $a(s,x,\xi) = (\tilde{q}(s,x,\xi))^{1/2} \in S_{1/3,0}^{1/6}(\RR^{2n}_{x,\xi})$ and let $A$ be the operator defined by the Fourier integral in \eqref{fourierintegral}. Thus for each $s$, $\tilde{Q}-A^2 \in \text{Op}(S_{1/3,0}^{0})$ and since functions of $s$ commute with $A$, we now have the following lower bound on the first term in \eqref{positivebound}
\begin{equation}\label{normallowerbound}
\int_{s\geq 0}\frac{\alpha(s)}{C_1\lambda^{-\frac 23}+s}AD_su\overline{AD_s u}\,dsdx \lesssim \Re\langle\psi'(s)\tilde{Q}D_su,D_s u\rangle + \|Du\|_{L^2}^2.
\end{equation}
We now recall the arguments at the end of \S\ref{ss:symplectic}.  Given the observations there, the lemma, \eqref{normallowerbound}, and the upper bounds above, we have the following estimate for any $\epsilon >0$, which we will see is an acceptable deviation from the one in \eqref{alphahalfspace}:
\begin{align*}
 \max_{1\leq j\leq J_\lambda} 2^{j}\|AD_s u\|_{L^2(\tilde{S}_j)}^2 &+\max_{1\leq j\leq J_\lambda} 2^{\frac j2}\|D_{x_2} u\|_{L^2(\tilde{S}_j)}^2   \\
 &\lesssim \|Du\|_{L^2}^2 + \|Pu\|_{L^2}^2 +\|\tilde{Q}D_su\|_{L^2}(\|Du\|_{L^2} + \|Pu\|_{L^2})\\
&\lesssim \epsilon\|\tilde{Q}D_su\|_{L^2}^2+(1+\epsilon^{-1})\left( \|Du\|_{L^2}^2 + \|Pu\|_{L^2}^2 \right).
\end{align*}
Taking $\epsilon>0$ sufficiently small, it suffices to show that
\begin{equation}\label{normalupperbound}
 \|\tilde{Q}D_su\|_{L^2}^2 \lesssim \max_{1\leq j\leq J_\lambda} 2^{j}\|AD_s u\|_{L^2(\tilde{S}_j)}^2 + \|Du\|_{L^2}^2.
\end{equation}
However, since $\tilde{Q}-A^2 \in \text{Op}(S_{1/3,0}^{0})$ for each $s$, it suffices to show that
\begin{equation}\label{asquared}
\|A^2D_s u\|_{L^2(\tilde{S}_j)}^2 \lesssim 2^{\frac j2}\|AD_s u\|_{L^2(\tilde{S}_j)}^2
\end{equation}
as geometric summation can then be used to handle the sum over all $1 \leq j \leq J_\lambda$ on the left (recalling that when $j=J_\lambda$, we change $\tilde{S}_j$ to $\tilde{S}_{J_\lambda}$). But this follows from the fact that for each $s$, $(\lambda^{-2/3}+s)^{1/4}a(s,\cdot) \in S_{1/3,0}^{0}(\RR^{2n}_{x,\xi})$.

In summary, we obtain the following estimate which is stronger than \eqref{alphahalfspace}
\[
\max_{1\leq j\leq J_\lambda} 2^{\frac j4}\|Du\|_{L^2(\tilde{S}_j)} + \max_{1\leq j\leq J_\lambda} 2^{\frac j2}\|AD_s u\|_{L^2(\tilde{S}_j)} \lesssim \|Du\|_{L^2} + \|Pu\|_{L^2}.
\]
Indeed, for each $s$, $1/a(s,x,\xi) \in S^{0}_{1/3,0}(\RR^{2n}_{x,\xi})$ so by the symbolic calculus, the second term on the left here dominates the second term on the left in \eqref{maincoordestimate} (up to acceptable error) $\|D_s u\|_{L^2(S_j)} \lesssim \|AD_s u\|_{L^2(\tilde{S}_j)} + \|Du\|_{L^2}$.

\begin{proof}[Proof of Lemma \ref{thm:positivebound}]
Begin by observing that up to acceptable errors in $S^2_{1/3,0}(\RR_{x,\xi}^{2n})$, the symbol of $E_1$ can be computed as
\begin{multline}\label{poissonsimp}
\frac{\alpha(s)\xi_2^2\beta_\lambda(\xi)(\frac{\xi_1}{\xi_2}+s r(s,x,\xi))}{(C_1\lambda^{-\frac 23} + \frac{\xi_1}{\xi_2} \zeta r^{-1}  + s)^{\frac 12} (C_1\lambda^{-\frac 23} + s) }+\\
\frac{\psi(s)\xi_2^2\beta_\lambda(\xi)\left[ C_1\lambda^{-\frac 23}r +\left(-\frac{\xi_1}{\xi_2}\right)(1-\zeta)+ \left(-\frac{\xi_1}{\xi_2}\right) \left(\frac{\xi_1}{\xi_2}+sr\right)r^{-1}\prtl_{s}\zeta \right]}{(C_1\lambda^{-\frac 23}+\frac{\xi_1}{\xi_2} \zeta r^{-1}  + s)^{\frac 32}}
\end{multline}
Strictly speaking, we should include a term of the form $\left(-\frac{\xi_1}{\xi_2}\right) \left(\frac{\xi_1}{\xi_2}+sr\right)\zeta\prtl_{s}r^{-1}$ in the brackets and should also account for derivatives of $\beta_\lambda(\xi)$, but these terms can be neglected as their contribution is in $S^2_{1/3,0}$ for every $s$. Our first task is now to check that \eqref{poissonsimp} is bounded below by
\begin{equation}\label{pcmlower}
\tilde{\delta}\xi_2^2\frac{\alpha(s)}{(C_1\lambda^{-\frac 23} + s)^{\frac 12} }
\end{equation}
for some sufficiently small constant $\tilde{\delta} >0$.  Later on, we will see that \eqref{poissonsimp} also dominates the contribution of $ \Re \langle E_2 u, u\rangle $.

We first treat the term involving $\prtl_{s}\zeta$, observing that
\[
\prtl_{s}\zeta = \left(-\frac{\xi_1}{\xi_2}\right)\left(\frac{1}{s^2}\right)\tilde{\zeta}'\left(\frac{\xi_1}{\xi_2}s^{-1}\right)\chi(\lambda^{\frac 23}s)+\lambda^{\frac 23}\chi'(\lambda^{\frac 23}s)\left(\tilde{\zeta}\left(\frac{\xi_1}{\xi_2}s^{-1}\right)- \chi(\lambda^{-\frac 13}\xi_1)\right)
\]
The contribution of the first term to the brackets in \eqref{poissonsimp} here is nonnegative since $\tilde{\zeta}$ is increasing and $\xi_1\xi_2^{-1} + sr>0$ on the support of that term.  The second is only nonzero when $1 \leq \lambda^{\frac 23} s \leq 2$, $\chi(\lambda^{-\frac 13}\xi_1)=0$ and $\tilde{\zeta}(\xi_1\xi_2^{-1}s^{-1})>0$, that is,
\[
\xi_1 \lesssim \lambda^{\frac 13} \quad \text{and} \quad -\xi_1 \leq \frac{c_0}2 \xi_2 s \approx \lambda^{\frac 13}.
\]
Since $\left(-\frac{\xi_1}{\xi_2}\right)\left(\frac{\xi_1}{\xi_2}+sr\right) \approx \lambda^{-\frac 43}$ for such $s,\xi_1,\xi_2$, the contribution of this term in \eqref{poissonsimp} can be dominated by the term $C_1 \lambda^{-\frac 23} r$ by taking $C_1$ sufficiently large.

In the region $\zeta >0 $, the remaining portion of the second term in \eqref{poissonsimp} is nonnegative, by choosing $C_1$ large, and the first is bounded below by
\[
\frac{\alpha(s)\xi_2^2\left(\frac{\xi_1}{\xi_2}+sr(s,x,\xi)\right)^{\frac 12}}{C_1\lambda^{-\frac 23}  + s } \gtrsim \frac{\alpha(s)\xi_2^2}{(C_1\lambda^{-\frac 23} + s)^{\frac 12} }.
\]
We are left to consider the case when $\zeta\equiv 0$.  In this case we need to see that
\[
\alpha(s)\left(\frac{|\xi_1|}{\xi_2}+s\right) \ll \psi(s) \left(C_1\lambda^{-\frac 23}+\left(-\frac{\xi_1}{\xi_2}\right)\right)
\]
When $s \leq \lambda^{-\frac 23}$ this is clear in the region $|\xi_1| \leq \frac{C_1}{100}\lambda^{-\frac 13}$ as $\alpha(s)=1$ here.  In all other cases, $-\frac{\xi_1}{\xi_2} \geq \frac{c_0}2 s$ and the right hand side dominates the left hand side by taking $C_2$ sufficiently large in the definition of $\psi$ (say $C_2\geq 100$).

We now turn to the contribution of $E_2$, and we claim that the absolute value of this is majorized by the expression in \eqref{poissonsimp}.  A tedious computation reveals that when $3 \geq k\geq 1$, $\prtl_{s}^k \zeta$ is supported where $s \geq \lambda^{-\frac 23}$ and that $|\prtl_{s}^k \zeta| \lesssim s^{-k}$.  Moreover, it can be seen that for these $k$,
\[
\left|\frac{\xi_1}{\xi_2}\prtl_{s}^k\zeta \right| \lesssim s^{1-k}.
\]
Hence another computation reveals that
\begin{equation}\label{thirdderivsymbol}
\left|\psi^{(k_1)}(s)\prtl_s^{k_2}\tilde{q}\right| \lesssim C_2(C_1\lambda^{-\frac 23} + s)^{-\frac 32}(\lambda^{-\frac 23} + s)^{-2}.
\end{equation}
Indeed, the worst possible contribution comes from the case where $k_2=3$ and the term which involves $\prtl_{s}^3\zeta$.  Otherwise, one has larger powers of $(C_1\lambda^{-\frac 23} + s)^{-1}$.

First consider the more difficult $\zeta >0$ region.  Here we need to see that we may dominate the right hand side of \eqref{thirdderivsymbol} by the first term in \eqref{poissonsimp}
\[
C_2(C_1\lambda^{-\frac 23} + s)^{-\frac 32}(\lambda^{-\frac 23} + s)^{-2} \ll \frac{\alpha(s)(\frac{\xi_1}{\xi_2}+s)^{\frac 12}\lambda^2}{C_1\lambda^{-\frac 23}+s}
\]
or equivalently (after cancelation and multiplying both sides by $\lambda^{-\frac 53}$)
\[
C_2(C_1 + \lambda^{\frac 23}s)^{-\frac 12}(1+ \lambda^{\frac 23}s)^{-2} \ll \alpha(s)\left(\lambda^{\frac 23}\frac{\xi_1}{\xi_2}+\lambda^{\frac 23}s\right)^{\frac 12}
\]
Since we may assume that $\alpha(s) = 1$ for $s \leq \lambda^{-\frac 23}$, this inequality is clear for such $s$ by taking $C_2 \ll C_1$, since $\lambda^{2/3}\xi_1/\xi_2\geq 1$ in this case.  Otherwise, we need to exploit some facts about our function $\alpha(s)$.  Given our definition of slowly varying \eqref{slowvary}, we may assume that for $s \geq 0$
\begin{equation}\label{alphafcnvary}
2^{-3\delta}\leq \frac{\alpha(2s)}{\alpha(s)}\leq 2^{3\delta},  \qquad \text{and for $k \geq 1$,} \qquad 2^{-3\delta k} \leq \frac{\alpha(2^k s)}{\alpha(s)} \leq 2^{3\delta k},
\end{equation}
the latter being a consequence of the former and induction. Since $\alpha(s) = 1$, we can take dyadic numbers $k_1,k_2$ such that $2^{-k_1} \approx \lambda^{-\frac 23}$ and $2^{-k_2}\approx s$, to see that
\[
\alpha(s) \geq (\lambda^{\frac 23}s)^{-3\delta}2^{-6\delta}.
\]
For $s \geq \lambda^{-\frac 23}$ it now suffices to show that
\[
C_2(C_1 + \lambda^{\frac 23}s)^{-\frac 12}(\lambda^{\frac 23}s)^{-2} \ll (\lambda^{\frac 23}s)^{\frac 12-3\delta}
\]
But this again is clear by taking $C_1 \gg C_2^{4}$ and $3\delta \ll \frac 12$.

The $\zeta =0$ case is easier as here it is sufficient to see that the right hand side of \eqref{thirdderivsymbol} is dominated by the second term in \eqref{poissonsimp}, leading us to bound
\[
(C_1\lambda^{-\frac 23} + s)^{-\frac 32}(\lambda^{-\frac 23} + s)^{-2} \ll (C_1\lambda^{-\frac 23} + s)^{-\frac 32}\lambda^2(C_1 \lambda^{-\frac 23})
\]
after canceling $\psi$.  But this is equivalent to $(\lambda^{-\frac 23} + s)^{-2} \ll C_1\lambda^{\frac 43}$, which can be arranged by taking $C_1$ large.

We now define a new symbol $b(s,x,\xi)$ by
\[
\xi_2^2 b(s,x,\xi)= \eqref{poissonsimp} + \sum_{k_1+k_2=3}c_{3,k_1,k_2} \psi^{(k_1)}(s)\prtl_s^{k_2}\tilde{q}(s,x,\xi) - \eqref{pcmlower},
\]
for some $\tilde{\delta}>0$ sufficiently small in \eqref{pcmlower}.  Defining the operator $B$ as in \eqref{fourierintegral}, we are reduced to seeing that there exists $C$ sufficiently large such that for any $s$,
\begin{equation}\label{feffphongclaim}
0 \leq \Re\left( \int BD_{x_2}u(s,x)\overline{D_{x_2}u(s,x)}\,dx\right)
+ C\|D_{x_2}u(s,\cdot)\|_{L^2(\RR^n)}^2. 
\end{equation}
Indeed, the symbolic calculus shows that the difference between the left hand side of \eqref{positivebound} and the first term here is dominated by the acceptable error $\lambda^2\|u\|_{L^2}^2 \approx \|Du\|_{L^2}^2$.  To see \eqref{feffphongclaim}, we make a comparatively crude argument which uses a variant of the Fefferman-Phong inequality (see \cite[Theorem 0.7.C]{tay91}), stating that the inequality \eqref{feffphongclaim} holds provided $b(s,\cdot) \in S_{\rho,\delta}^{2(\rho-\delta)}$ (uniformly and with $\rho >\delta$).

First observe that
\begin{equation}\label{bapprox}
b(s,x,\xi) \approx \begin{cases}
\alpha(s)\left(\frac{\xi_1}{\xi_2}+s r(s,x,\xi)\right)^{\frac 12}(C_1\lambda^{-2/3} + s)^{-1}, & \zeta >0\\
(C_1\lambda^{-\frac 23}-\frac{\xi_1}{\xi_2})(C_1\lambda^{-2/3} + s)^{-3/2}, & \zeta = 0
\end{cases}.
\end{equation}
As above, the derivatives of $b$ in $\xi_1$ are the most poorly behaved, gaining powers of $\left(C_1\lambda^{-\frac 23}\xi_2 + \xi_1r^{-1}\zeta+\xi_2 s \right)^{-1}$, while derivatives in the remaining directions $\xi_2,\dots,\xi_n$ gain powers of $\langle \xi \rangle^{-1}\approx \lambda^{-1}$.  Hence naively, one has $b(s,\cdot) \in S^1_{1/3,0}$, and the Fefferman-Phong inequality does not apply.  However, when $\lambda^{1/2} s$ is bounded below by a small number, the derivatives in $ \xi_1$ gain powers of $\lambda^{-1/2}$ and the symbol is in $S^1_{1/2,0}$, correcting this matter.  Otherwise, when $s \ll \lambda^{-1/2}$, we use smooth bump functions to write $b=b_1+b_2$ where $\supp(b_1) \subset \{\xi_1 \leq -\lambda^{1/2}\}$ and $\supp(b_2) \subset \{\xi_1 \geq -2\lambda^{1/2}\}$.  In this case we have that $\supp(b_1) \cap \supp(\zeta) = \emptyset$, so over $\supp(b_1)$, the derivatives of $b$ in $\xi_1$ gain full powers of $(1+|\xi|)^{-1}\approx \lambda^{-1}$, so we may select the bump function so that $b_1 \in S^1_{1/2, 0}$.  On the other hand, using \eqref{bapprox}, we have $b_2 \lesssim \lambda^{\frac 23}$, so $b_2 \in S_{1/3, 0}^{2/3}$.  Thus applying the Fefferman-Phong inequality separately to the operators defined by $b_1$ and $b_2$ yields the desired bound.
\end{proof}

\section{Strichartz and square function estimates}\label{sec:stz}
\subsection{Preliminary reductions}\label{ss:stzprelim}
In this section we prove Theorems \ref{thm:sqfcn} and \ref{thm:wavestz}.  We revert back to the notation in \S\ref{ss:coordintro}, letting $x_0$ denote the time coordinate and $x_n$ a defining function for the boundary. Recall that given the discussion following Theorem \ref{thm:wavestz}, it suffices to prove \eqref{coordstz}, \eqref{sqfcncoord} for $u$ compactly supported in $x_0$.  We extend $u$ to all of $\RR^{n+1}$ by reflecting the solution and the initial data the in boundary $x_n=0$ in an even or odd fashion corresponding to Neumann or Dirichlet conditions, thus preserving the boundary condition $u|_{x_n}=0$ or $\prtl_{x_n}u|=0$.  Extending the metric coefficients in an even fashion $g^{ij}(x',|x_n|)$, $Pu$ will then have the same parity as $u$ over all of $\RR^{n+1}$.  The extended coefficients are thus Lipschitz and satisfy \eqref{c0}.

In this section, the preliminary reductions are common to both Strichartz and square functions estimates.  To enable us to treat them simultaneously, we let $X$ denote the function space corresponding to either Strichartz or square function bounds:
\[
\|u\|_X = \left( \int \left(\int|u(x)|^q dx' \right)^{\frac pq}dx_0\right)^{\frac 1p}
\text{ or } \|u\|_X = \left( \int \left(\int |u(x)|^2 dx_0 \right)^{\frac q2}dx'\right)^{\frac 1q}.
\]
where $x'$ denotes $(x_1,\dots,x_n)$ and the domain of integration in this variable is over all of $\RR^n$.  Since we assume that $u$ is compactly supported in $x_0$, we may take the domain of integration in this variable to be $(-\infty,\infty)$.

Next we observe that it suffices to show that the extended solution $u$ satisfies
\begin{equation}\label{firstcoordred}
\|\langle D \rangle^{1-\gamma} u \|_{X} \lesssim \|u\|_{H^1(\RR^{n+1})}+\|Pu\|_{L^2(\RR^{n+1})}.
\end{equation}
In other words, it suffices to assume that the Sobolev spaces (respectively vector valued Sobolev spaces in \eqref{sqfcncoord}) can be replaced by the usual Sobolev spaces over $\RR^n$ defined with respect to the Fourier multiplier $\langle \xi \rangle = (1+|\xi|^2)^{1/2}$.  But this follows from interpolation since $\|f\|_{W^{2,q}(\RR^{n}_+)} = \|\tilde{f}\|_{W^{2,q}(\RR^n)}$ whenever $\tilde{f}$ is an odd (respectively even) extension of a function satisfying $\tilde{f}|_{x_n=0}=0$ (respectively $\prtl_{x_n}\tilde{f}|_{x_n=0}=0$), and similarly for the vector valued counterpart.

We now note that it suffices to assume that the support of the space time Fourier transform $\supp(\widehat{u})$ is supported away from the origin.  This follows from Sobolev embedding and the fact that the commutator of $P$ with a smooth cutoff to $|(\xi_0,\dots,\xi_1)|\lesssim 1$ will map $L^2 \to H^1$.  Next let $\Lambda$ be a Fourier multiplier defined by a homogeneous function of degree zero such that $\supp(\Lambda)\subset\{|\xi_0| \approx |(\xi_1,\dots,\xi_n)|  \}$ so that $\supp(1-\Lambda)$ is disjoint from the characteristic set of $P$; this can be arranged by taking $\veps$ sufficiently small in \eqref{c0}.  Sobolev embedding and elliptic regularity (see for example, \cite[Theorem 2.2.B]{tay91}) then give
\[
 \|\langle D \rangle^{1-\gamma}(1-\Lambda) u\|_X \lesssim  \|(1-\Lambda) u\|_{H^{\frac 32-\gamma}} \lesssim   \|u\|_{H^1(\RR^{n+1})}+\|Pu\|_{ L^2(\RR^{n+1})}.
\]
Here we have implicitly used that $[P,\Lambda]:H^1 \to L^2$, which follows since the Coifman-Meyer commutator theorem guarantees $[g^{ij},\Lambda]:L^2\to H^1$.  Note that $\supp(\Lambda)$ splits into two disjoint regions corresponding to $\pm \xi_0 >0$.  Taking a finer partition of unity, we may further restrict the support of $\Lambda$, assuming that it is supported where $\xi_0>0$ and where $\xi_1 \gg |(\xi_2,\dots,\xi_n)|$ as a rotation of coordinates treats the other cases.

We take a careful Littlewood-Paley decomposition. Let $\{\beta_l(\zeta)\}_{l=0}^\infty$ be a sequence of smooth functions $\beta_l:[0,\infty) \to [0,1]$ such that
\begin{equation}\label{littpaleyseq}
\sum_{l=0}^\infty \beta_l(\zeta) =1 \text{ for } \zeta \geq 0, \qquad \beta_l(\zeta) = \beta_1(2^{-l+1}\zeta) \text{ for } l\geq 1,
\end{equation}
with $\supp(\beta_0) \subset [0,2)$ and $\supp(\beta_1) \subset (2^{-\frac 12},2^{\frac 32})$.  Now let $\beta_k^T$ be the Fourier multiplier with symbol $\beta_k(|\xi'|)$ (the ``$T$'' here  signifying ``tangential").

Next, let $\beta_l^N$ denote the Fourier multiplier with symbol $\beta_l(|\xi_n|)$ for $l > k$.  We then let $\beta^N_{<k}$ denote the multiplier with symbol $1-\sum_{k <l} \beta_l(|\xi_n|)$ so that the multiplier truncates to frequencies $\lesssim 2^{k}$ and
\begin{equation}\label{littpaleylambda}
\langle D \rangle^{1-\gamma} \Lambda u = \sum_{k=1}^\infty \beta^N_{<k}\beta^T_k (\langle D \rangle^{1-\gamma} \Lambda u) + \sum_{k=1}^\infty \sum_{l=k+1}^\infty \beta^N_{l}\beta^T_k (\langle D \rangle^{1-\gamma} \Lambda u).
\end{equation}
Applying the Littlewood-Paley square function estimate first in $l$, then in $k$ yields
\[
\left\| \sum_{k=1}^\infty \sum_{l=k+1}^\infty \beta^N_{l}\beta^T_k (\langle D \rangle^{1-\gamma} \Lambda u)\right\|_X^2 \lesssim \sum_{k=1}^\infty \sum_{l=k+1}^\infty 2^{l(1-\gamma)}\left\| \beta^N_{l}\beta^T_k ( \Lambda u)\right\|_X^2.
\]
It is clear that this can be done for Strichartz estimates, and in the case of square function estimates, we use that $\langle D \rangle^{1-\gamma} \Lambda$ localizes to $\xi_0\approx \xi_1$ to apply the Littlewood-Paley estimate in the $x_0$ variable.  Similarly, we have
\[
\left\|\sum_{k=1}^\infty \beta^N_{<k}\beta^T_k (\langle D \rangle^{1-\gamma} \Lambda u)\right\|_X^2 \lesssim \sum_{k=1}^\infty 2^{k(1-\gamma)}\|\beta^N_{<k}\beta^T_k ( \Lambda u)\|_X^2 .
\]

We now show that we may bound the second sum in \eqref{littpaleylambda}.  Let $P_l$ denote the differential operator obtained by truncating the metric coefficients in the frequency variable to frequencies $|(\xi_0,\xi_1,\dots,\xi_n)|\lesssim 2^l$.  Note that if $g^{ij}_l$ denotes the regularization of $g^{ij}$, then $\|g^{ij}_l-g^{ij} \|_{L^\infty}\lesssim 2^{-l}$.  The angle one parametrices from \cite{smithsogge07} and \cite{bssbdrystz}, which amounts to the $\theta =1$ case in \eqref{smsosqfcn} below, show that
\begin{equation}\label{angleonebound}
2^{l(1-\gamma)}\|\beta^N_{l}\beta^T_k \Lambda u\|_X \lesssim 2^{l}\|\beta^N_{l}\beta^T_k \Lambda u\|_{L^2} + \|P_l(\beta^N_{l}\beta^T_k \Lambda u)\|_{L^2}.
\end{equation}
We have that
\begin{equation*}
 \sum_{l\geq k}^\infty \left(2^{l}\|\beta^N_{l}\beta^T_k \Lambda u\|_{L^2} + \|(P-P_l)(\beta^N_{l}\beta^T_k \Lambda u)\|_{L^2}+\|\beta^N_{l}\beta^T_k P(\Lambda u)\|_{L^2}\right)
\end{equation*}
is bounded by the right hand side of \eqref{firstcoordred}, which uses the consequence of the Coifman-Meyer commutator bound $[P,\Lambda]:L^2\to H^1$ (cf. the ensuing argument).  It thus suffices to see that
\[
\sum_{k=1}^\infty \sum_{l=k}^\infty \left\|[P,\beta^N_{l}\beta^T_k] \Lambda u\right\|_{L^2}^2
\]
is bounded by the right hand side of \eqref{firstcoordred}.  Khinchin's inequality reduces this to showing that for an arbitrary sequence $\epsilon_{l,k}=\pm 1$
\begin{equation}\label{khinchin}
\left\|\left[g^{ij},\sum_{k=1}^\infty \sum_{l=k}^\infty \epsilon_{l,k}\beta^N_{l}\beta^T_k\right] v \right\|_{L^2} \lesssim \|v\|_{H^{1}},
\end{equation}
But this a consequence of bootstrapping the Coifman-Meyer commutator theorem.

A similar line of reasoning can be applied to the first sum in \eqref{littpaleylambda}, hence it suffices to show that $\beta^N_{<k}\beta^T_k (\Lambda u)$ satisfies the following estimates akin to \eqref{angleonebound}
\begin{equation*}
2^{k(1-\gamma)}\|\beta^N_{<k}\beta^T_k \Lambda u\|_X \lesssim 2^{k}\|\beta^N_{<k}\beta^T_k \Lambda u\|_{L^2} + \|P_l(\beta^N_{<k}\beta^T_k \Lambda u)\|_{L^2}.
\end{equation*}
Note that without loss of generality, we may assume that $\Lambda(\xi)$ is independent of $\xi_n$ near $\xi_n=0$ so that $(\beta^N_{<k}\Lambda)(\xi)$ can be written as $\beta^N_{<k}(\xi_n)\Lambda(\xi_0,\dots,\xi_{n-1})$.  Since $\beta^N_{<k}(\xi_n)$ is even, $\beta^N_{<k}\beta^T_k (\Lambda u)$ satisfies the same homogeneous boundary conditions as $u$.  Relabeling this function as $u_\lambda$, $\lambda = 2^k$, the desired estimates now result from:
\begin{theorem}\label{thm:coordstz}
Suppose in the coordinate system chosen above, the Fourier support of $u_\lambda$ is localized to a conic region $\xi_1 \gg |(\xi_2,\dots,\xi_n)|$ and also a region where $\xi_0\approx \xi_1 \approx \lambda$.  Assume further that $P_\lambda$ is the operator formed by truncating the coefficients of the operator in \eqref{Pstz} to frequencies less than $\lambda$. If $u_\lambda$ satisifies boundary conditions $u|_{x_n=0}=0$ or $\prtl_{x_n}u|_{x_n=0}$, then
\begin{equation}\label{stzreduction}
\|u_\lambda\|_X \lesssim \lambda^{\gamma}\|u_\lambda\|_{L^2} + \lambda^{\gamma-1}\|P_\lambda(u_\lambda)\|_{L^2}.
\end{equation}
\end{theorem}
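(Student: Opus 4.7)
The plan is to mirror the strategy of \cite{blairrls}, which showed that refined local smoothing estimates for the Schr\"odinger equation imply scale invariant Strichartz bounds via a wave packet parametrix; here we use Theorem \ref{thm:maincoordestimate} to play the role of the refined local smoothing bound. The output is a scale invariant estimate for $u_\lambda$ from the regularized wave operator $P_\lambda$, allowing for controlled errors on small collars of $\{x_n=0\}$.

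First I would make a dyadic angular decomposition $u_\lambda=\sum_{\theta} u_{\lambda,\theta}$, where $\theta$ ranges over dyadic numbers in $[\lambda^{-1/2},1]$ and $u_{\lambda,\theta}$ is microlocalized along wave packets whose direction of propagation makes angle $\approx\theta$ with the boundary. Since the Fourier support of $u_\lambda$ has $\xi_0\approx\xi_1\approx\lambda$ and $|(\xi_2,\dots,\xi_n)|\ll\xi_1$, this amounts to a partition of unity in the normal frequency $|\xi_n|\approx \theta\lambda$. For each $\theta$, I would invoke the Smith--Sogge angle-$\theta$ wave packet parametrix from \cite{smithsogge07}, \cite{bssbdrystz} to produce an approximate solution $\tilde u_{\lambda,\theta}$ to $P_\lambda$ satisfying the desired $X$-norm bound with loss $\theta^{-\sigma}$ for an appropriate $\sigma$ controlled by the subcritical deficit $\tfrac{n-1}{2}-\tfrac2p-\tfrac{n-1}{q}$, together with boundary-matching error terms $r_{\lambda,\theta}=u_{\lambda,\theta}-\tilde u_{\lambda,\theta}$ essentially supported in a collar $S_{<j}$ of width $2^{-j}\approx\theta^2$ near $\{x_n=0\}$.

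Next I would apply Theorem \ref{thm:maincoordestimate} to estimate $r_{\lambda,\theta}$. The full derivative gain yields
\[
2^{j/4}\|Dr_{\lambda,\theta}\|_{L^2(S_{<j})}\lesssim \|Du_{\lambda,\theta}\|_{L^2}+\|P_\lambda u_{\lambda,\theta}\|_{L^2},
\]
i.e.\ a factor of $\theta^{1/2}$ saved over the standard collar estimate, while the normal derivative produces the stronger gain of $2^{j/2}\approx\theta^{-1}$, which is exactly what is needed to handle the reflection error terms produced by the parametrix on $\{x_n=0\}$. Inserting these bounds on the error in place of the raw energy estimate used in the parametrix closure, and feeding $r_{\lambda,\theta}$ back into the $X$-norm bound through an $L^2$-to-$X$ embedding at the collar scale, each angular piece satisfies an estimate of the form
\[
\|u_{\lambda,\theta}\|_X\lesssim \theta^{\rho}\lambda^{\gamma}\bigl(\|u_{\lambda,\theta}\|_{L^2}+\lambda^{-1}\|P_\lambda u_{\lambda,\theta}\|_{L^2}\bigr),
\]
with exponent $\rho>0$ precisely when $(p,q,\gamma)$ is subcritical. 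Summing dyadically in $\theta$ (and using almost orthogonality of the angular pieces on the $L^2$ side) then yields \eqref{stzreduction}.

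The main obstacle is the bookkeeping that matches the parametrix error scales to the gains of Theorem \ref{thm:maincoordestimate}: one must verify that the $2^{j/4}$ tangential and $2^{j/2}$ normal savings in the collar $2^{-j}\approx\theta^2$ close the gap produced by the angle-$\theta$ parametrix across the entire subcritical range $\tfrac{2}{p}+\tfrac{n-1}{q}<\tfrac{n-1}{2}$, $\gamma\in[0,1]$, and do so uniformly for both Dirichlet and Neumann conditions. As in \cite{blairrls}, this amounts to checking that the exponent $\rho$ above is strictly positive in the subcritical regime, which is where the Knapp line $\tfrac{2}{p}+\tfrac{n-1}{q}=\tfrac{n-1}{2}$ enters as the critical threshold; the square function endpoint $q>\tfrac{2(n+1)}{n-1}$ appears analogously. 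The Neumann case is the more delicate of the two, since the parametrix and its reflections produce trace contributions requiring the stronger $2^{j/2}$ normal gain, precisely the content of the third term on the left of \eqref{maincoordestimate}.
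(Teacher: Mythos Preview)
Your high-level strategy---angular decomposition in $|\xi_n|$, Smith--Sogge parametrix at each angle with a $\theta^\sigma$ gain, and the localized energy estimate of Theorem~\ref{thm:maincoordestimate} to close---matches the paper's in spirit. But there is a genuine gap in the decomposition you describe, and a misconception about where the errors live.

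The paper does \emph{not} use a purely frequency-based decomposition $u_\lambda=\sum_\theta u_{\lambda,\theta}$. It uses a \emph{combined spatial--frequency} (``nontangential/tangential'') decomposition: for $1\le j\le J_\alpha$ (with $2^{-J_\alpha}\approx\lambda^{-\alpha}$, $\alpha<2/3$ chosen so that $\tfrac{1}{3\alpha}-\tfrac12<\sigma$), set $\theta_j=2^{-j/2}$ and
\[
w_j=\Gamma_j(D_{x_n})(\chi_j u_\lambda),\qquad v_j=\Gamma_{<j}(D_{x_n})(\psi_j u_\lambda),
\]
where $\chi_j$ cuts off to $\{|x_n|\lesssim 2^{-j}\}$, $\psi_j=\chi_j-\chi_{j+1}$ to $\{|x_n|\approx 2^{-j}\}$, and $\Gamma_j,\Gamma_{<j}$ to $|\xi_n|\approx\lambda\theta_j$, $|\xi_n|\lesssim\lambda\theta_j$ respectively. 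The \emph{spatial} support in a collar of width $2^{-j}$ is the entire point: it is what allows Theorem~\ref{thm:maincoordestimate} to be applied to the pieces themselves, giving $2^{j/4}\|w_j\|_{L^2}+2^{j/4}\|v_j\|_{L^2}\lesssim\|u_\lambda\|_{L^2}+\lambda^{-1}\|P_\lambda u_\lambda\|_{L^2}$. A purely frequency-localized $u_{\lambda,\theta}$ is not concentrated in any collar, so the localized energy estimate cannot be invoked on it; without the spatial cutoff one is left with $\|u_{\lambda,\theta}\|_X\lesssim\lambda^\gamma\theta^{\sigma-1/2}\|u_{\lambda,\theta}\|_{L^2}$ after summing over $O(\theta^{-1})$ time slabs, which is summable only for $\sigma>1/2$---exactly the restricted range of \cite{bssbdrystz}, not the full subcritical range claimed.

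Your description of the Smith--Sogge parametrix as producing ``boundary-matching error terms $r_{\lambda,\theta}$ essentially supported in a collar'' is not how that construction works: after odd/even extension across $x_n=0$, the parametrix in \cite{smithsogge07} yields a direct $X$-norm bound \eqref{smsosqfcn} on slabs and produces no boundary remainder. The errors that actually need to be controlled are the commutators $[P_\lambda,\Gamma_{<j}]\psi_j u_\lambda$ and $\Gamma_{<j}[P_\lambda,\psi_j]u_\lambda$ arising from the spatial and frequency truncations (see \eqref{microlocalcommutator}, \eqref{psicomm}); these are bounded by combining the collar support with the frequency cutoff, not by any trace estimate. Two further corrections: the parametrix gives a \emph{gain} $\theta^\sigma$, not a loss $\theta^{-\sigma}$; and the proof of this theorem uses only the $2^{j/4}$ gain on the first term in \eqref{maincoordestimate}---the stronger $2^{j/2}$ normal-derivative gain is not needed here (it enters only in \S\ref{sec:intrinsic}), so your emphasis on it for the Neumann case is misplaced in this argument.
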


\subsection{The nontangential/tangential decomposition} Here we introduce the decomposition used in \cite{blairrls} which allows us to use the localized energy estimates to bound the error terms in a wave packet parametrix for $P$, thus yielding Theorem \ref{thm:coordstz}. Begin by defining $\sigma$ as a function of $q$, $p$ by
\begin{equation}\label{sigmadef}
 \sigma =
\begin{cases}
(n-1)(\frac 12-\frac 1q)-\frac 2p, & X = L^p L^q
\\
(n-1)(\frac 12-\frac 1q)-\frac 2q, & X = L^q L^2
\end{cases}.
\end{equation}
The motivation for this choice is that it characterizes the gain $\theta_j^{\sigma}$ in the $X$ estimates for solutions which are localized to a cone $|\xi_n|\lesssim \lambda\theta_j$ established in \cite{smithsogge07}, \cite{bssbdrystz}.  Note that the subcritical hypotheses on the exponents defining $X$ ensure that $\sigma>0$.

We next choose $\alpha<2/3$ such that $\frac{1}{3\alpha}-\frac 12 < \sigma$.  In this section, we let $J_\alpha$ be the largest integer such that $2^{-J_\alpha} \geq \lambda^{-\alpha}$.  For $1 \leq j \leq J_\alpha$, let $\chi_j(x_n) = \chi(2^j x_n)$ where $\supp(\chi)\subset (-2,2)$ and $\supp(1-\chi)\subset \RR\setminus [-1,1]$.  Then let $\psi_j=\chi_j-\chi_{j+1}$ so that $\supp(\psi_j) \subset \{2^{-j} \leq |x_n| \leq 2^{1-j}\}$.  Now take a sequence of smooth cutoffs to be applied in the $\xi_n$ variable, such that $\supp(\Gamma_j) \subset \{|\xi_n| \approx \lambda 2^{-\frac j2} \}$, $\supp(\Gamma_{<j}) \subset \{|\xi_n| \lesssim \lambda 2^{-\frac j2}\}$,\footnote{Since the ``$<j$" in the subscript here denotes that the support lies in $\{|\xi_n| \lesssim \lambda 2^{-\frac j2}\}$, this is a slight deviation in the notational convention above.}   and
\[
\Gamma_{<j}(\xi_n) = \Gamma_{<L}(\xi_n)+\sum_{l=j}^{L-1} \Gamma_l(\xi_n), \qquad 1 = \Gamma_{<J_\alpha}(\xi_n)+\sum_{1 \leq j < J_\alpha} \Gamma_j(\xi_n)
\]
for $\xi_n$ in the projection of the support of $\widehat{u} $.  Now define
\begin{align*}
 w_j &= \Gamma_j(D_{x_n})(\chi_j u_\lambda), & 1 \leq j < J_\alpha \\
 v_j &= \Gamma_{<j}(D_{x_n})(\psi_j u_\lambda)=\left(\Gamma_{<J_\alpha}(D_{x_n})+\sum_{l=j}^{J_\alpha-1} \Gamma_l(D_{x_n}) \right)\psi_j u_\lambda , & 1 \leq j < J_\alpha \\
v_{J_\alpha} &= \Gamma_{<J_\alpha}(D_{x_n})(\chi_{J_\alpha}u_\lambda).
\end{align*}
It is not hard to verify (cf. \cite[p.792-3]{blairrls}) that $u_\lambda = v_{J_\alpha} + \sum_{j=1}^{J_\alpha-1}(v_j+w_j)$.\footnote{The purpose of the decomposition is so that $\cup_j \supp(w_j)$ captures the nontangential reflections of $u_\lambda$ in the boundary.  In the process, $\cup_j \supp(v_j)$ contains bicharacteristic rays which are distant from the boundary, making ``tangential" a slight misnomer.}

The main idea in showing Theorem \ref{thm:coordstz} is that the results in \cite{smithsogge07}, \cite{bssbdrystz} imply that when $1 \leq j < J_\alpha$
\begin{align}
 \|w_j\|_X &\lesssim \lambda^{\gamma}2^{-\frac{j\sigma}{2}}\left( 2^{\frac j4}\|w_j\|_{L^2} +  \lambda^{-1}2^{-\frac j4}\|P_\lambda(w_j)\|_{L^2}\right),\label{wjbound}\\
 \|v_j\|_X &\lesssim \lambda^{\gamma}2^{-\frac{j\sigma}{2}}\left( 2^{\frac j4}\|v_j\|_{L^2} +
 \lambda^{\frac 12}2^{-\frac j8}   
 \|\langle \lambda^{\frac 12}2^{-\frac j4}x_n\rangle^{-M}v_j\|_{L^2}+ \lambda^{-1}2^{-\frac j4}\|P_\lambda(v_j)\|_{L^2}\right),\label{vjbound}
\end{align}
and that
\begin{equation}\label{vJbound}
  \|v_{J_\alpha}\|_X \lesssim \lambda^{\gamma}2^{-\frac{{J_\alpha}\sigma}{2}}\left( \lambda^{\frac 16}\|v_{J_\alpha}\|_{L^2} +  \lambda^{-\frac 76}\|P_\lambda(v_{J_\alpha})\|_{L^2}\right).
\end{equation}
Postponing the proof of these estimates, we show that they yield Theorem \ref{thm:coordstz}.  The argument is essentially the same as the one in \cite[\S2.3]{blairrls}, but we review it here for the sake of completeness.
We will take $M$ sufficiently large based on $\alpha$ in the middle term on the right in \eqref{vjbound}; its presence will be motivated later on.  For now we note that since $\Gamma_{<j}(D_{x_n})$ is rapid decaying outside of $\lambda 2^{-j/2}$ neighborhood of $\{2^{-j} \leq |x_n| \leq 2^{-j+1}\}$ and $\lambda 2^{-\frac{3j}{2}} \geq \lambda^{1-\frac{3\alpha}{2}} \gg 1$, we have
\[
 \|\langle \lambda^{\frac 12}2^{-\frac j4}x_n\rangle^{-M}v_j\|_{L^2} \lesssim ( \lambda 2^{-\frac{3j}{2}})^{-\frac M2}\|u_\lambda\|_{L^2}\lesssim (\lambda^{1-\frac{3\alpha}{2}})^{-\frac M2}\|u_\lambda\|_{L^2}
\]
so that the factor on the right can be made smaller than $\lambda^{-1/2}$ by choosing $M$ large.  Next, we will show that for all choices of $j$, including $j=J_\alpha$,
\begin{equation}\label{locsmoothingvj}
  2^{\frac j4}\|v_j\|_{L^2} +  \lambda^{-1}2^{-\frac j4}\|P_\lambda(v_j)\|_{L^2} \lesssim \|u_\lambda\|_{L^2} +
\|P_\lambda(u_\lambda)\|_{L^2},
\end{equation}
and that the same holds when $v_j$ is replaced by a $w_j$. Therefore we may use the geometric gains of $2^{-j\sigma/2}$ in \eqref{vjbound}, \eqref{wjbound} to see that $\sum_{j=1}^{J_\alpha -1}(\|v_j\|_X+\|w_j\|_X)$ is dominated by the right hand side of \eqref{stzreduction}.  The term $v_{J_\alpha}$ is then estimated by using that the gain here, combined with the gain of $2^{-\frac{{J_\alpha}\sigma}{2}}$ in \eqref{vJbound} counterbalances the loss of $\lambda^{\frac 16}$.  Indeed, our choice of $\sigma$ ensures that $\lambda^{\frac 16}2^{-\frac{{J_\alpha}\sigma}{2}-\frac{{J_\alpha}}{4}}\approx \lambda^{\frac{1}{6}-\frac{\alpha}{4} - \frac{\alpha\sigma}{2}} \ll 1$.

Too see \eqref{locsmoothingvj}, when $1 \leq j < J_\alpha$, first observe that given the frequency localization, $\lambda^{-1}\|(P-P_\lambda)(u_\lambda)\|_{L^2}\lesssim \|u_\lambda\|_{L^2}$ since the coefficient smoothing yields a gain of $\lambda^{-1}$. Thus the localized energy estimates in Theorem \ref{thm:maincoordestimate} and frequency localization give
\begin{equation}\label{locsmoothingvj2}
 2^{\frac j4}\|v_j\|_{L^2} \lesssim \|u_\lambda\|_{L^2} + \lambda^{-1}\|P_\lambda(u_\lambda)\|_{L^2} .
\end{equation}
We are left to bound
\[
 P_\lambda(v_j) = [P_\lambda, \Gamma_{<j}]\psi_j u_\lambda + \Gamma_{<j} [P_\lambda,\psi_j] u_\lambda +\Gamma_{<j}\psi_j (P_\lambda u_\lambda).
\]
It is straightforward to bound the contribution the last term, so we will show that
\begin{equation}\label{microlocalcommutator}
\lambda^{-1}2^{-\frac j4}\|  P_\lambda(v_j) -\Gamma_{<j}\psi_j (P_\lambda u_\lambda) \|_{L^2} \lesssim 2^{\frac j4}\|\chi_{j-1} u_\lambda\|_{L^2}
\end{equation}
which in turn is bounded using the same argument as in \eqref{locsmoothingvj2}.  The commutator $[P_\lambda, \Gamma_{<j}]$ can be written as a sum of $[\Gamma_{<j},g^{lm}_\lambda]D_{x_l}D_{x_m}$ and $[\Gamma_{<j},D_{x_l}g^{lm}_\lambda]D_{x_m}$ where $l,m \neq n$ and $g^{lm}_\lambda$ denote the coefficients of $P_\lambda$.  Observe that
\[
\|[g^{lm}_\lambda,\Gamma_{<j}]\|_{L^2 \to L^2} \lesssim  \lambda^{-1} 2^{j/2}
\]
uniformly in $\lambda$, $j$. This can be verified by examining its Schwartz kernel since $g^{lm }_\lambda\in C^1$ and we may assume symbol bounds of the form $|\Gamma_{<j}^{(k)}|\lesssim (\lambda^{-1}2^{j/2})^k$ (cf. \cite[(2.30)]{blairrls}). Frequency localization thus bounds the contribution of this commutator.  As for the second commutator term
\begin{equation}\label{psicomm}
\Gamma_{<j} [P_\lambda,\psi_j] u_\lambda = -2\Gamma_{<j}\prtl_{x_n}\left((\prtl_{x_n}\psi_j) u_\lambda \right) + \Gamma_{<j}(\prtl^2_{x_n}\psi_j) u_\lambda.
\end{equation}
Since $|\prtl^k_{x_n}\psi_j|\lesssim 2^{jk}$ and $\lambda^{-1}2^{j/2}\Gamma_{<j}\prtl_{x_n}:L^2 \to L^2$ uniformly, we see that the contribution of this term is bounded above by the right hand side of \eqref{microlocalcommutator}.  The proof of \eqref{locsmoothingvj} when $j=J_\alpha$ or when $v_j$ is replaced by $w_j$ is similar.

\subsection{Square function bounds}\label{ss:sqfcncoord} We now use results in \cite{smithsogge07} (and later \cite{bssbdrystz}) to prove \eqref{wjbound}, \eqref{vjbound}, \eqref{vJbound}.
For now we assume $X=L^q L^2$, and discuss the case of Strichartz bounds in the next section.  In this case, it is convenient to use the microlocalization of $u_\lambda$ to treat $P_\lambda$ as an operator hyperbolic in $x_1$, taking the factorization
\[
g^{11}(x)\left( \xi_1 + q^+(x,\xi') \right)\left( \xi_1 - q^-(x,\xi') \right), \quad \xi'=(\xi_0,\xi_2,\dots,\xi_n),\, x'=(x_0,x_2,\dots,x_n),
\]
for some functions $q^\pm$ which are positive on the support of $w_j$, $v_j$.  The notation $x',\xi'$ thus plays a slightly different role here than previously.  We then let $q_\lambda(x,\xi')$ denote the symbol obtained by truncating $q^-$ to $x$ frequencies $\ll\lambda$ and use $Q_\lambda$ to denote the corresponding operator\footnote{The work \cite{smithsogge07} uses the notation $P_\lambda$ to denote this first order operator.}.  Moreover, since $\xi_1 + q_\lambda(x,\xi') >0$ on the support of  $w_j$, $v_j$, we have by elliptic regularity (cf. \cite[p.115]{smithsogge07})
\begin{align*}
\|(D_{x_1}-Q_\lambda(x,D_{x'}))v_j\|_{L^2} &\lesssim \lambda^{-1}\|P_\lambda(v_j)\|_{L^2}+\|v_j\|_{L^2},\\
\|(D_{x_1}-Q_\lambda(x,D_{x'}))w_j\|_{L^2} &\lesssim \lambda^{-1}\|P_\lambda(w_j)\|_{L^2}+\|w_j\|_{L^2}.
\end{align*}

Let $\theta_j:=2^{-\frac j2} \in [\lambda^{-\frac 13}, 1]$ and suppose $\Gamma_j$ is a microlocal cutoff to frequencies $|\xi_n| \approx \lambda 2^{-\frac j2}=\lambda \theta_j$.  For some sufficiently small $\tilde{\veps}>0$ we also define slabs
\[
S_{j,k}=[k\tilde{\veps}\theta_j,(k+1)\tilde{\veps}\theta_j]_{x_1} \times \RR^{n}_{x'} \qquad S=[-1,1]_{x_1} \times \RR^{n}_{x'}
\]
with $S_{j,k} \subset S$.  Given the results in \cite{smithsogge07}, we have the following estimate on arbitrary functions $u_\lambda$ such that $\widehat{u}_\lambda$ is supported in $\xi_0 \gg |(\xi_2,\dots,\xi_n)|$ (that is to say \emph{any} such function, not just the one which birthed $v_j$ and $w_j$) 
\begin{equation}\label{smsosqfcn}
 \|\Gamma_ju_\lambda\|_{L^qL^2(S_{j,k})}  \lesssim \lambda^{\gamma}\theta_j^\sigma \left(\|u_\lambda\|_{L^\infty L^2(S)} +\|(D_{x_1}-Q_\lambda(x,D_{x'}))u_\lambda\|_{L^1L^2(S)}\right).
\end{equation}
where we take the norms on the left and right to mean
\begin{align*}
L^qL^2(S_{j,k}) &= L^q([k\tilde{\veps}\theta_j,(k+1)\tilde{\veps}\theta_j]_{x_1}\times\RR^{n-1}_{x_2,\dots,x_n};L^2(\RR_{x_0})),\\L^pL^2(S)&=L^p([-1,1]_{x_1};L^2(\RR^{n}_{x'})), \qquad p=1,\infty.
\end{align*}
Indeed, this is a consequence of Theorem 3.1 (when $n=2$) and the discussion preceding Theorem 7.2 (when $n\geq 3$) in \cite{smithsogge07}, along with the flux estimates in \S6 there.\footnote{Note that the notation varies slightly in this work as we are taking a base $\sqrt{2}$ decomposition in $|\xi_n|/\lambda$ instead of the usual dyadic decomposition, in particular \cite{smithsogge07} denotes $\theta= 2^{-j}$. Also, strictly speaking in \cite{smithsogge07}, $L^1L^2(S)$ is replaced by $L^2(S)$, but given Duhamel's principle, the former is acceptable.}  We stress that this result holds for any $2^{-\frac j2} \in [\lambda^{-\frac 13}, 1]$, not just ones which satisfy $j\geq J_\alpha$ as above.  Moreover, as indicated there, the same result holds if the microlocal cutoff $\Gamma_j$ truncates to $|\xi_n|\lesssim \lambda^{2/3}$ instead.  This is equivalent to saying that $|\xi_n| \lesssim \lambda\theta$ when $\theta = \lambda^{-1/3}$, so we refer to this as the ``$\theta\approx \lambda^{-1/3}$" case below.

We will provide a brief sketch of \eqref{smsosqfcn} below for the convenience of the reader and to show the flexibility of the argument; however, we stress that what will appear is merely a summary, and the estimate is due to Smith and Sogge.  For now, we observe that it yields the bounds \eqref{wjbound}.  To this end, note that if $\phi_{j,k}(x_1)$ is a bump function supported in $[(k-1)\tilde{\veps}\theta_j,(k+2)\tilde{\veps}\theta_j] $ which is identically one for $x_1 \in [k\tilde{\veps}\theta_j,(k+1)\tilde{\veps}\theta_j] $, then the Duhamel formula allows us to see that
\begin{multline}\label{doubletrick}
 \|\phi_{j,k} w_j \|_{L^\infty L^2(S)} + \|(D_{x_1}-Q_\lambda(x,D_{x'}))(\phi_{j,k} w_j)\|_{L^1L^2(S)}\\
 \lesssim
2^{\frac j4}\|w_j\|_{L^2(S_{j,k}^*)} + 2^{-\frac j4}\|(D_{x_1}-Q_\lambda(x,D_{x'})) w_j\|_{L^2(S_{j,k}^*)}
\end{multline}
where $S_{j,k}^*= [(k-1)\tilde{\veps}\theta_j,(k+2)\tilde{\veps}\theta_j] \times \RR^{n}_{x'}$.\footnote{The cutoffs $\Gamma_j$ are denoted as $\beta_j$ in \cite{smithsogge07}.  Given the bound \eqref{psicomm}, it is convenient to include the multiplier $\Gamma_j$ in the driving force instead of estimating its contribution as in \S6.4 of that work.}  Thus \eqref{smsosqfcn} implies that $\|w_j\|_{L^qL^2(S_{j,k})} $ is dominated by this quantity. Since $S_{j,k}^*\cap S_{j,k'}^*\neq \emptyset$ implies that $|k-k'|\leq 1$, we obtain \eqref{wjbound} by taking a sum in $k$.

\begin{proof}[Proof sketch of \eqref{smsosqfcn}]
Let $Q_j$ be the operator obtained by replacing $Q_\lambda$ by the operator obtained by truncating the symbol $q_\lambda$ to frequencies less than $\lambda^{1/2}\theta_j^{-1/2} = \lambda^{1/2}2^{j/4}$.  Suppose
\[
(D_{x_1}-Q_j(x,D_{x'}))u_\lambda = F_j + G_j
\]
when $\theta_j > \lambda^{-\frac 13}$, we claim that
\begin{multline}\label{smsoprep}
\|\Gamma_ju_\lambda\|_{L^qL^2(S_{j,k})}  \lesssim \lambda^{\gamma}\theta_j^{\sigma}\Big( \|\Gamma_ju_\lambda\|_{L^\infty L^2(S_{j,k})} +\lambda^{\frac 14}\theta_j^{\frac 14}\|\langle\lambda^{\frac 12}\theta_j^{-\frac 12}x_n\rangle^{-1} \Gamma_ju_\lambda\|_{L^2(S_{j,k})}\\
+ \|F_j\|_{L^1L^2(S_{j,k})} + \lambda^{-\frac 14}\theta_j^{-\frac 14}\|\langle\lambda^{\frac 12}\theta_j^{-\frac 12}x_n\rangle^2G_j\|_{L^2(S_{j,k})}\Big),
\end{multline}
and when $\theta_j \approx \lambda^{-\frac 13}$ (recall that this is the case of a cutoff to $\{|\xi_n| \lesssim \lambda^{2/3}\}$), we claim that
\begin{equation}\label{smsoprep2}
\|\Gamma_ju_\lambda\|_{L^qL^2(S_{j,k})}  \lesssim \lambda^{\gamma}\theta_j^{\sigma}( \|\Gamma_ju_\lambda\|_{L^\infty L^2(S_{j,k})} + \|F_j+G_j\|_{L^1L^2(S_{j,k})}),
\end{equation}
The aforementioned flux estimates in \cite[\S6]{smithsogge07} show that the right hand side here is in turn uniformly bounded by the right hand side of \eqref{smsosqfcn} (cf. \cite[(3.1)]{smithsogge07}).  In particular, we have for $|\xi'|\approx 1$ (which is \cite[(6.31)]{smithsogge07} scaled back using $x\mapsto \theta_j^{-1}x$)
\begin{equation}\label{qjapprox}
|q_j(x,\xi')-q_\lambda(x,\xi')| \lesssim_N \lambda^{-\frac 12} \theta_j^{\frac 12}\langle \lambda^{\frac 12} \theta_j^{-\frac 12} x_n\rangle^{-N}
\end{equation}
and hence the error induced by replacing $Q_\lambda$ by $Q_j$ can be absorbed into $G_j$.  The key idea is that the singular contribution of $\prtl^2_{x_n} (g^{lm}(x',|x_n|))$ is $2\prtl_{x_n}g^{lm}(x',0)\delta(x_n)$, hence regularizing the symbol in this manner results in such tails.

To see \eqref{smsoprep}, we may translate to $k=0$ and dilate by a factor of $\theta_j$, thus considering (with a slight abuse of notation) $u_\mu(x)= (\Gamma_ju_\lambda)(\theta_j x)$, which is now localized at a frequency scale $|\xi|\approx\mu: = \lambda \theta_j$ with $|\xi_n| \approx \mu\theta_j$ when $\theta_j >\lambda^{-\frac 13}$ or $|\xi_n| \lesssim \mu^{\frac 12}$ when $\theta_j \approx \lambda^{-1/3}$.  Note that the slab $S_{j,0}$ dilates to a unit slab $S=[0,\tilde{\veps}] \times \RR^n$ in the new coordinates.  We write $(D_{x_1}-Q_\mu(x,D_{x'}))u_\mu = F + G$ where the symbol of $Q_\mu$ is $q_\mu(x,\xi') = \theta_j q_j (\theta_j x, \theta_j^{-1}\xi')$.  When $\theta_j > \mu^{-\frac 12}$ (equivalently $\theta_j > \lambda^{-1/3}$ in the old coordinates), the bound \eqref{smsoprep} rescales to
\begin{multline*}
\|u_\mu\|_{L^qL^2(S)}  \lesssim \mu^{\gamma}\theta_j^{\sigma}\Big( \|u_\mu\|_{L^\infty L^2(S)}
+\mu^{\frac 14}\theta_j^{\frac 12}\|\langle\mu^{\frac 12}x_n\rangle^{-1} u_\mu\|_{L^2(S)}\\
+ \|F\|_{L^1L^2(S)} + \mu^{-\frac 14}\theta_j^{-\frac 12}\|\langle\mu^{\frac 12}x_n\rangle^2G\|_{L^2(S)}\Big),
\end{multline*}
and when $\theta_j \approx \mu^{-\frac 12}$ (equivalently $\theta_j \approx \lambda^{-1/3}$), the bound \eqref{smsoprep2} rescales to
\begin{equation*}
\|u_\mu\|_{L^qL^2(S)}  \lesssim \mu^{\gamma}\theta_j^{\sigma}( \|u_\mu\|_{L^\infty L^2(S)} + \|F+G\|_{L^1L^2(S)}),
\end{equation*}

We now suppress the dependence of $\theta$ on $j$ for the remainder of the argument.  Consider the wave packet transform of a function $f(y')$, $y'\in \RR^n$,
\[
T_\mu f(x',\xi') = \mu^{\frac n4} \int e^{-i\langle \xi',y'-x'\rangle} g(\mu^{\frac 12}(y'-x'))f(y')\,dy',
\]
where $\widehat{g}$ is smooth, radial, and compactly supported in a small ball about the origin with $\|g\|_{L^2(\RR^n)} = (2\pi)^{-n/2}$.  The transformation satisfies $T^*_\mu T_\mu=I$ and hence $T_\mu: L^2(\RR^n_{y'}) \to L^2(\RR^{2n}_{x',\xi'})$ is an isometry.  Let $H_{q_\mu} = d_{x'}q_\mu\cdot d_{\xi'} - d_{\xi'} q_\mu \cdot d_{x'} $ denote the Hamiltonian vector field of $q_\mu$.  When $\theta \approx \mu^{-\frac 12}$, 
\[
|\prtl^{\beta_1}_{x'}\prtl^{\beta_2}_{\xi'}q_\mu(x,\xi')| \lesssim_{\beta_1,\beta_2} \mu^{\frac 12\max(0,|\beta_1|-2)} \qquad \text{ for } |\xi'|\approx 1,
\]
that is, $q_\mu(\cdot,\xi')$ behaves as a $C^2$ symbol truncated to frequencies less than $\mu^{1/2}$.  This is the threshold by which it can be seen that $T_\mu Q-iH_{q_\mu}T_\mu: L^2(\RR^n_{y'}) \to L^2(\RR^{2n}_{x',\xi'})$ is uniformly bounded.  Hence $\tilde{u}(x,\xi') := T_\mu(u(x_1,\cdot))(x',\xi')$ solves a transport equation in $x_1,x',\xi'$ with bounded driving force.  When $\theta > \mu^{-\frac 12}$, $q_\mu$ instead satisfies bounds
\[
|\prtl^{\beta_1}_{x'}\prtl^{\beta_2}_{\xi'}q_\mu(x,\xi')| \lesssim_{\beta_1,\beta_2,N}
1+\mu^{\frac 12(|\beta_1|-1)}\theta\langle\mu^{\frac 12} x_n\rangle^{-N}
\]
for $|\xi'|\approx 1$ (cf. \cite[(6.32)]{smithsogge07}).  Therefore by \cite[Lemma 4.4]{smithsogge07}, $\tilde{u}$ instead solves the transport equation
\[
(\prtl_{x_1}-H_{q_\mu})\tilde{u}(x,\xi') = \tilde{F}(x,\xi') + \tilde{G}(x,\xi')
\]
where over $\tilde{S} =[0,\tilde{\veps}] \times \RR^{2n}_{x',\xi'}$,
\begin{multline}\label{rescaledweighted}
\| \tilde{F}\|_{L^1L^2(\tilde{S})} + \mu^{-\frac 14}\theta^{-\frac 12} \|\langle\mu^{\frac 12} x_n\rangle^2\tilde{G}\|_{L^2(\tilde{S})} \lesssim \|u_\mu\|_{L^\infty L^2} + \mu^{\frac 14}\theta^{\frac 12} \|\langle\mu^{\frac 12} x_n\rangle^{-1}u_\mu \|_{L^2(S)}\\
+\| F\|_{L^1L^2(S)} + \mu^{-\frac 14}\theta^{-\frac 12} \|\langle\mu^{\frac 12} x_n\rangle^2G\|_{L^2(S)}.
\end{multline}
The main idea is that since packets are spatially concentrated within a distance $\mu^{-1/2}$, the conjugation error $T_\mu Q-iH_qT_\mu$ can be bounded by employing the weighted $L^2$ estimates.  In particular, the proof uses that if $\supp(\widehat{f}) \subset \{|\xi_n|\approx \mu\}$,
\begin{equation}\label{conjugerror}
\|(T_\mu Q-iH_qT_\mu)f\|_{L^2(\RR^{2n}_{x',\xi'})}\lesssim_N \|f\|_{L^2} + \mu^{\frac 12}\theta\|\langle\mu^{\frac 12}x_n\rangle^{-N} f\|_{L^2},
\end{equation}
and hence the exact powers of $\langle\mu^{\frac 12} x_n\rangle$ in \eqref{rescaledweighted} are not crucial.  By the same idea, we have that $\|\langle \mu^{\frac 12}x_n\rangle^{-N}T_\mu u_\mu\|_{L^2(\tilde{S})} \lesssim \|\langle \mu^{\frac 12}x_n\rangle^{-N}u_\mu\|_{L^2(S)} $ (cf. \cite[(4.3)]{smithsogge07}.

Let $\Theta_{0,r}(x',\xi')$ denote the integral curve of $H_{q_\mu}$ satisfying $\Theta_{0,r}(x',\xi')|_{r=0}=(x',\xi')$.  It is shown that by employing Duhamel's principle and the $V^2_q$ spaces of Koch and Tataru, the desired $L^qL^2$ bounds on $u_\lambda $ follow from the following estimate on functions $\tilde{f}(x',\xi')\in L^2(\RR^{2n})$ such that $\supp(\tilde{f}) \subset \{|\xi_n|\approx \mu\theta \}$
\begin{equation}\label{Wdef}
 \|W\tilde{f}\|_{L^qL^2(S)} \lesssim \mu^\gamma \theta^\sigma \|\tilde{f}\|_{L^2(\RR^{2n}_{x',\xi'})}, \qquad W\tilde{f}(x) = T_\mu^*(\tilde{f}\circ \Theta_{0,x_1})(x').
\end{equation}
Indeed, the compact support of $\widehat{g}$ in the definition of $T_\mu$ means that $\supp(\tilde{u}) \subset \{|\xi_n|\approx \mu\theta\}$. The kernel $K(x,y)$ of $WW^*$ can be written (cf. \cite[p.131]{smithsogge07})
\begin{equation*}
\mu^{\frac n2}\int e^{i\langle \zeta,x'-z \rangle-i\langle \zeta_{y_1,x_1},y'-z_{y_1,x_1}\rangle } g(\mu^{\frac 12}(x'-z)) g(\mu^{\frac 12}(y'-z_{y_1,x_1}))\tilde{\Gamma}_\theta(\zeta)dzd\zeta
\end{equation*}
where $\tilde{\Gamma}_\theta$ is supported in $\{|\xi_n|\approx \mu\theta\}$.  Consequently, the bound on $W$ follows from showing that the kernel $K(x,y)$ satisfies (cf. \cite[p.150]{smithsogge07})
\begin{equation}\label{wpkernelbound}
\int|K(x,y)|\,dx_0 + \int|K(x,y)|\,dy_0 \lesssim \mu^{n-1}\theta (1+\mu|x_1-y_1|)^{-\frac{n-2}{2}}(1+\mu\theta^2|x_1-y_1|)^{-\frac 12} .
\end{equation}
Indeed, this yields an $L^1_{x_2,\dots,x_n}L^2_{x_0}\to L^\infty_{x_2,\dots,x_n}L^2_{x_0}$ bound which can be interpolated with trivial $L^2$ bounds to obtain the following for fixed $x_1,y_1$
\[
\left\| \int K(x,y)F(y)\,dy' \right\|_{L^qL^2(\RR^n_{x'})} \lesssim \mu^{2\gamma}\theta^{2\sigma}|x_1-y_1|^{-\frac 2q}\|F(\cdot,y_1,\cdot)\|_{L^{q'}L^2(\RR^n_{x'})}
\]
This is because we may assume that $q$ is sufficiently close to, but strictly greater than, $\frac{2(n+1)}{n-1}$, we may sacrifice as much of $(1+\mu\theta^2|x_1-y_1|)^{-\frac 12}$ as is needed to obtain the decay in $|x_1-y_1|$, the rest contributes to a gain in $\theta$.  At this point, the Hardy-Littlewood-Sobolev inequality shows that $\|WW^*F\|_{L^qL^2(S)} \lesssim \mu^{2\gamma}\theta^{2\sigma}\|F\|_{L^{q'}L^2(S)}$.

The main idea behind \eqref{wpkernelbound} is that $K$ is concentrated in a $\mu^{-1}$ neighborhood of the light cone, so integration in $y_0$ always yields a gain on that scale.  The gains in $\theta$ can be motivated by considering the three cases $|x_1-y_1| \leq  \mu^{-1}$, $\mu^{-1} < |x_1-y_1| \leq \mu^{-1}\theta^{-2} $, and $\mu^{-1}\theta^{-2} < |x_1-y_1| $.  In the first case, the separation of $x_1,y_1$ generates negligible oscillations, so one simply picks up the volume of the support of $K$.  In the last case, the separation in  $x_1,y_1$ generates the usual decay in $|x_1-y_1|$.  The second case is thus intermediate to these two extremes, one obtains the best estimate by exploiting oscillations in all variables except for $\xi_n$, thus obtaining adjusted decay in $|x_1-y_1|$ while gaining $\mu\theta$, which is the volume of the $\xi_n$ projection of $\supp(\Gamma_\theta)$.
\end{proof}

We now turn to the proof of \eqref{vjbound}.  Given \eqref{qjapprox}, we may write $F_j=(D_{x_1}-Q_j(x,D))v_j$ so that it suffices to show the following analogue of \eqref{smsoprep}
\begin{multline*}
\|v_j\|_{L^qL^2(S_{j,k})}  \lesssim \\\lambda^{\gamma}2^{-\frac{j\sigma}{2}}\Big( \|v_j\|_{L^\infty L^2(S_{j,k})}
+\lambda^{\frac 14}2^{-\frac j4}\|\langle\lambda^{\frac 12}2^{\frac j4}x_n\rangle^{-M} v_j\|_{L^2(S_{j,k})}
+ \|F_j\|_{L^1L^2(S_{j,k})} \Big),
\end{multline*}
as the error $(Q_\lambda-Q_j)v_j$ can be absorbed by the middle term in \eqref{vjbound}. Also, commuting the equation with a cutoff $\phi_{j,k}$ as before, the $L^pL^2(S_{j,k})$ spaces can be replaced with weighted $L^2(S_{j,k}^*)$ spaces.  Next we rescale the problem by $2^{-\frac j2}$, setting, $\mu = \lambda 2^{-\frac j2}$ and $v_\mu(x)=v_j(2^{-\frac j2}x)$, $F_\mu(x)=2^{-\frac j2}F_j(2^{-\frac j2}x)$.  We then use the wave packet transform as before, setting $\tilde{v} = T_\mu v_\mu$ , $(\prtl_{x_1}-H_{q_\mu})\tilde{v}= \tilde{F}$. Using \eqref{conjugerror} with $\theta=2^{-\frac j2}$, we have that
\begin{equation*}
\| \tilde{F}\|_{L^1L^2(\tilde{S})} \lesssim_N \|u\|_{L^\infty L^2(S)} + \mu^{\frac 14}2^{-\frac j4} \|\langle\mu^{\frac 12} x_n\rangle^{-M} v \|_{L^2(S)}
+\| F_\mu\|_{L^1L^2(S)}.
\end{equation*}
We are thus reduced to showing \eqref{Wdef} with $\theta = 2^{-\frac j2}$ and $\supp(\tilde{f})$ contained in a set of the form $\{|\xi_n|\lesssim \mu\theta\}$.  While in previous works, \eqref{Wdef} is shown under the assumption that the support of $\tilde{f}$ is instead of the form $\{|\xi_n|\approx \mu\theta\}$, tracing through the steps of the proof verifies that the larger support presents no additional complication. Alternatively, one can simply use a smooth partition of unity
\[
\Gamma_j(\xi_n) = \Gamma_{<\lfloor \log_2 (\lambda^{2/3}) \rfloor}(\xi_n) + \sum_{j \leq l \leq \lfloor \log_2 (\lambda^{2/3}) \rfloor }\Gamma_j(\xi_n),
\]
partitioning $\supp(\tilde{f})$ into cones of smaller angles so that \eqref{Wdef} applies to each term.

The bound \eqref{vJbound} follows by similar considerations, but this time we truncate $q$ to frequencies less than $\lambda^{2/3}$ and take the dilation $x \mapsto \lambda^{-1/3} x$ so that $\mu = \lambda^{2/3}$ and that in the new coordinates, $|\prtl^{\beta_1}_x\prtl^{\beta_2}_{\xi'} q_\mu(x,\xi')| \lesssim \mu^{\max(0,|\beta_1|-2)}$ for $|\xi'|\approx 1$.  Therefore as observed above, conjugating $Q_\mu$ by the wave packet transform introduces bounded error.   Duhamel's principle means that we are reduced to showing that with $W$ defined as before, $\|W\tilde{f}\|_{L^qL^2(S)} \lesssim \mu^\gamma 2^{-J_\alpha/2} \|\tilde{f}\|_{L^2}$ when $\supp(\tilde{f}) \subset \{|\xi_n|\lesssim \mu 2^{-J_\alpha/2} \}$.  The bound thus follows by the same considerations as in the $v_j$ case.

\subsection{Strichartz estimates}
When $X=L^p L^q$ we instead factorize the principal symbol of $P$ as a quadratic in $\xi_0$ instead of $\xi_1$.
\[
g^{00}(x)\left( \xi_0 + q^+_\lambda(x,\xi') \right)\left( \xi_0 - q^-_\lambda(x,\xi') \right) \quad \xi'=(\xi_1,\xi_2,\dots,\xi_n),\; x'=(x_1,x_2,\dots,x_n),
\]
again with $q^\pm >0$ on the support of $\widehat{u}_\lambda$.  Working with the half wave operator $D_{x_0}-Q_\lambda(x,\xi')$, the proofs of \eqref{vjbound}, \eqref{vJbound}, \eqref{wjbound} all follow by the same procedure as before.  Indeed, this is the key observation in \cite{bssbdrystz}.  The only crucial difference is that the integration in \eqref{wpkernelbound} is not needed, one simply observes that
\begin{equation}\label{wpkernelboundstz}
|K(x,y)| \lesssim \mu^n \theta(1+\mu|x_0-y_0|)^{-\frac{n-2}{2}}(1+\mu\theta^2|x_0-y_0|)^{-\frac 12}
\end{equation}
so that for $\frac{n-1}2-\frac{n-1}q -\frac 2p$ sufficiently small, interpolation gives
\[
\left\| \int K(x_0,\cdot,y_0,y')F(y_0,y')\,dy' \right\|_{L^q(\RR^n_{x'})} \lesssim \mu^{2\gamma}\theta^{2\sigma}|x_0-y_0|^{-\frac 2p} \|F(\cdot,y_0,\cdot)\|_{L^{q'}(\RR^n_{x'})}
\]
with $x'=(x_1,\dots,x_n)$ again by sacrificing only as much of the last factor on the right in \eqref{wpkernelboundstz} as needed to obtain the $|x_0-y_0|^{-\frac 2p} $ decay.

\section{Intrinsic localized energy estimates}\label{sec:intrinsic}
Here we prove Theorem \ref{thm:intrinsic}, then later verify the first part of Theorem \ref{thm:blairrls}.  Since the estimates here are for time-independent metrics, we use both $t$ and $x_0$ to denote the time coordinate.  By Duhamel's formula, it suffices to assume that $(D_t^2-\Delta_g)u_\mu=0$.  By taking a finite partition of unity it suffices to prove estimates on $\phi u_\mu$, where $\phi$ is a smooth bump function supported in a suitable coordinate system.  Specifically, we use the coordinate system outlined in \S\ref{ss:coordintro}, recalling that this can achieved by a transformation which is independent of $t$.  We assume
that in these coordinates, $\phi$ is supported in $\{|(t,x)|_\infty\leq 2\}$, identically one on $\{|(t,x)|_\infty\leq 1\}$ and that $\phi$ is independent of $x_n$ near $x_n=0$.  We may also suppose that the metric is extended to be flat for $|x|_\infty \geq 3$.

Recall that with $\rho(x) = \sqrt{\det g_{lm}(x)}$, we have that in our coordinate system,
\[
\Delta_g f = \frac{1}{\rho(x)} \sum_{i,j=1}^{n}D_{x_i}\left(g^{ij}(x)\rho(x)D_{x_j}f\right), \qquad g^{in}(x)=\delta_{in}.
\]
As in \S\ref{ss:coordintro}, we take an odd or even extension of $\phi u_\mu$ across $x_n=0$ (for Dirichlet or Neumann conditions respectively) and a corresponding odd or even extension of $(D_t^2-\Delta_g)(\phi u_\mu)$ across $x_n=0$.  Therefore in what follows, $\Delta_g$ denotes the differential operator obtained by extending the coefficients of $\Delta_g$ evenly across $x_n=0$.  We also abbreviate $P=D_t^2-\Delta_g$.  The desired estimates will follow from a further frequency decomposition determined by the Fourier transform in the coordinate system.  Since this requires us to examine frequency scales with respect to the Fourier transform which are less than $\mu$, we introduce $\tilde{u}_\mu := (\mu^{-2}\Delta_g)^{-1}u_\mu$ which by the functional calculus, satisfies
\[
\|\tilde{u}_\mu(t,\cdot)\|_{L^2( M)}\approx\|u_\mu(t,\cdot)\|_{L^2( M)}.
\]

Let $\psi_j$ be a smooth bump function identically one on $\{|x_n|\leq 2^{-j}\}$ and supported in $\{|x_n|\leq 2^{-j+1}\}$.
Now define the seminorm
\[
 \|u\|_{E_j}:=
2^{\frac j4}\sum_{i=0}^{n-1}\|\psi_j D_{x_i}u\|_{L^2(\RR^{n+1})} + 2^{\frac j2}\|\psi_j D_{x_n}u\|_{L^2(\RR^{n+1})},
\]
so that it is sufficient to show that for an implicit constant independent of $\mu$, $j$,
\begin{multline}\label{ejbound}
( \mu2^{\frac j4})^2\|\psi_j\phi u_\mu\|_{L^2(\RR^{n+1})}^2 +\|\phi u_\mu\|_{E_j}^2 \lesssim\\
\mu\|\tilde{\phi}u_\mu\|_{L^2(\RR^{n+1})}^2+ \|D(\tilde{\phi} u_\mu)\|_{L^2(\RR^{n+1})}^2 + \|P(\phi u_\mu)\|_{L^2(\RR^{n+1})}^2+\\
\mu\|\tilde{\phi}\tilde{u}_\mu\|_{L^2(\RR^{n+1})}^2+ \|D(\tilde{\phi} \tilde{u}_\mu)\|_{L^2(\RR^{n+1})}^2 + \|P(\phi \tilde{u}_\mu)\|_{L^2(\RR^{n+1})}^2  + \frac{1}{\mu}\sum_{i=1}^n\|\tilde{\phi}D_{x_i} \prtl_t \tilde{u}_\mu\|_{L^2(\RR^{n+1})}^2
\end{multline}
where $\tilde{\phi}$ denotes a vector of bump functions with support contained in $\supp(\phi)$ and $D=(D_{x_0},D_{x_1},\dots,D_{x_n})$.  Indeed, if such estimates are valid, then energy estimates show that the right hand side is bounded by the right hand side of \eqref{intrinsiclocnrg}.

We use the decomposition \eqref{littpaleylambda} from above with respect to the Fourier transform in our coordinates, but replacing $\Lambda u$ by $\phi u_\mu$.  We first note that the family $\{\psi_j \beta_l^N \beta_k^T (\phi u_\mu)\}_{l>k}$ is almost orthogonal in frequency, as $\widehat{\psi_j}$ rapidly decreasing outside of $|\xi_n| \lesssim 2^j\leq\mu^{\frac 23}$.  Consequently, we have the following bound on the terms where the normal frequencies dominate both $\mu$ and the tangential frequencies
\begin{multline*}
(\mu2^{\frac j4})^2\left\|\psi_j\sum_{k=1}^\infty \sum_{l>\max(k,\log_2\mu)} \beta^N_{l}\beta^T_k(\phi u_\mu)\right\|_{L^2}^2 +
 \left\|\sum_{k=1}^\infty \sum_{l>\max(k,\log_2\mu)} \beta^N_{l}\beta^T_k(\phi u_\mu)\right\|_{E_j}^2\\
\lesssim \sum_{k=1}^\infty \sum_{l>\max(k,\log_2\mu)} \left( (\mu2^{\frac j4})^2\left\| \beta^N_{l}\beta^T_k(\phi u_\mu)\right\|_{L^2}^2+\left\| \beta^N_{l}\beta^T_k(\phi u_\mu)\right\|_{E_j}^2 \right).
\end{multline*}
Next we have that
\begin{multline}\label{normalest}
(\mu2^{\frac j4})^2\left\|\psi_j \beta^N_{l}\beta^T_k(\phi u_\mu)\right\|_{L^2}^2+ \left\| \beta^N_{l}\beta^T_k(\phi u_\mu)\right\|_{E_j}^2\\ \lesssim \mu^2\|\beta^N_{l}\beta^T_k(\phi u_\mu)\|_{L^2(\RR^{n+1})}^2 + \|D(\beta^N_{l}\beta^T_k(\phi u_\mu)\|_{L^2(\RR^{n+1})}^2+
\|P(\beta^N_{l}\beta^T_k(\phi u_\mu))\|_{L^2(\RR^{n+1})}^2.
\end{multline}
In fact, a stronger estimate holds as the gain $2^{-\frac j4}$ can be replaced by $2^{-\frac j2}$ in all cases.  This can be seen either by using the angle one parametrix in \cite{smithsogge07} surveyed in \S\ref{ss:sqfcncoord} or by using that the solution can be represented as a sum of Fourier integral operators of the proper order.  Indeed, the Fourier localization to a cone $\{ |\xi_n| \gtrsim |\xi'|\}$ means that the solution is concentrated along rays which reflect in the boundary at an angle uniformly bounded from below and hence the gain in regularity is determined by the fact that packets will escape the region $|x_n| \lesssim 2^{-j}$ in a time scale comparable to $2^{-j}$.

We now sum over the terms on the right hand side of \eqref{normalest} over $l>k$.   The terms not involving $P$ present no problem as the sum over these terms are bounded by the first term in \eqref{ejbound}.  We next observe that for an arbitrary sequence $\veps_l$ taking on values $\pm 1$ and any Lipschitz function $a$, the Coifman-Meyer commutator theorem shows that $[a,\sum_l \veps_l \beta^N_l]$ maps $L^2 \to H^1$.  Hence as in \eqref{khinchin}, we have
\[
\sum_{k=1}^\infty\sum_{l=k+1}^\infty \|P(\beta^N_{l}\beta^T_k(\phi u_\mu))\|_{L^2}^2 \lesssim \sum_{k=1}^\infty\|\beta^T_k(\phi u_\mu)\|_{H^1(\RR^{n+1})}^2 +
\sum_{k=1}^\infty\|P(\beta^T_k(\phi u_\mu))\|_{L^2}^2 .
\]
An easier commutator argument then shows that the sum on the right is bounded by the right hand side of \eqref{normalest}.

Theorem \ref{thm:maincoordestimate} and a similar commutator argument shows that
\[
 (\mu2^{\frac j4})^2\left\|\psi_j\sum_{k>\log_2\mu} \beta^N_{<k}\beta^T_k(\phi u_\mu)\right\|_{L^2}^2 +
 \left\|\sum_{k>\log_2\mu} \beta^N_{<k}\beta^T_k(\phi u_\mu)\right\|_{E_j}^2
\]
is also bounded by the right hand side of \eqref{ejbound}.

We are now left to handle the cases where the frequency scale given by coordinates are less than $\mu$, which requires us to consider $\tilde{u}_\mu$ as defined above, which satsifies $u_\mu = \mu^{-2}\Delta_g \tilde{u}_\mu$.  First observe that for $|\alpha|\leq 1$,
\begin{equation}\label{doublesum}
 \left\| \sum_{k=1}^{\log_2\mu} \sum_{l=k}^{\log_2\mu}\psi_jD^\alpha \beta_l^N\beta_k^T (\phi u_\mu) \right\|_{L^2}^2 \lesssim \sum_{k=1}^{\log_2\mu}  \left(\sum_{l=k}^{\log_2\mu}\left\| \psi_jD^\alpha \beta_l^N\beta_k^T (\phi u_\mu) \right\|_{L^2}\right)^2 ,
\end{equation}
since we can use almost orthogonality in $k$, but not in $l$.  We will make a slight abuse of notation here and below, treating the $l=k$ term in the second sum as the result of replacing $\beta_l^N$ by $\beta_{<k}^N$.  Next observe that
\begin{equation}\label{massbump}
\mu2^{\frac j4}\left\|\psi_j\beta^N_{l}\beta^T_k(\phi u_\mu)\right\|_{L^2} \lesssim \mu2^{\frac j4-k}\left\|\psi_j\beta^N_{l}\beta^T_k D_T(\phi u_\mu)\right\|_{L^2}
\end{equation}
where in slight contrast to \S\ref{ss:poscomm}, we abbreviate $D_T = (D_{x_1},\dots,D_{x_{n-1}})$ without the time derivative.  This allows us to lump the mass terms in our estimates in with the derivatives over $i=1,\dots,n-1$.

We have that for $i=0,\dots, n$
\begin{multline}\label{intermednrml}
D_{x_i}\beta_l^N\beta_k^T (\phi u_\mu) - \mu^{-2}\Delta_g\left( D_{x_i}\beta_l^N\beta_k^T (\phi \tilde{u}_\mu) \right) = \\
\mu^{-2}D_{x_i}\beta_l^N\beta_k^T ([\phi,\Delta_g] \tilde{u}_\mu) + \mu^{-2}[D_{x_i}\beta_l^N\beta_k^T,\Delta_g]\phi \tilde{u}_\mu.
\end{multline}
The two terms on the right are of lower order, in that we have the following bound which does not use any restiction to $S_{<j}$
\begin{equation}\label{intermednrmlbound}
 \|\eqref{intermednrml}\|_{L^2(\RR^{n+1})} \lesssim \mu^{-2}c_{i,k,l}\sum_{m=1}^n\|\tilde{\phi}D_{x_m}u_\mu\|_{L^2(\RR^{n+1})} + \mu^{-2}\|\tilde{\phi}D_T\prtl_t u_\mu\|_{L^2(\RR^{n+1})}
\end{equation}
where $c_{i,k,l}=2^k$ when $i=0,\dots,n-1$ and $c_{i,k,l}=2^l$ when $i=n$. Note that when $i=1,\dots,n-1$, the loss of only $2^k$ counterbalances the smaller gain presented by \eqref{massbump}. The very last term involving $D_T\prtl_t u_\mu$ only comes into play when $i=0$.  Proving such estimates on the first term in \eqref{intermednrml} is thus straightforward.  To see how to bound the second term in \eqref{intermednrml}, note that the highest order contribution of $[D_{x_i}\beta_l^N\beta_k^T,\Delta_g]\phi \tilde{u}_\mu$ can be estimated by writing
\[
[\tilde{\beta}_l^N,g^{ij}]\tilde{\beta}_k^TD_{x_i}D_{x_j}(\phi \tilde{u}_\mu)+\tilde{\beta}_l^N[\tilde{\beta}_k^T,g^{ij}] D_{x_i}D_{x_j}(\phi \tilde{u}_\mu)
\]
where $\tilde{\beta}_l^N$ is either of the form $\beta_l^N$ or $2^{-l}D_{x_n}\beta_l^N$ and $\tilde{\beta}_k^T$ is either of the form $\beta_k^T$ or $2^{-k}D_{x_i}\beta_k^T$, $i=1,\dots,n-1$.  Since we only need to consider tangential derivatives $D_{x_i}D_{x_j}$, $i,j=1,\dots,n-1$, in this expression, the first term yields a net gain of $2^{k-l}$ and the second can be bounded using that the smoothness of $g^{ij}$ in $x_1,\dots,x_{n-1}$ means $[\tilde{\beta}_k^T,g^{ij}] D_{x_i}$ is bounded on $L^2$.  The lower order contributions of $[D_{x_i}\beta_l^N\beta_k^T,\Delta_g]\phi \tilde{u}_\mu$ can be handled just by using that the commutator of $\tilde{\beta}_l^N,\tilde{\beta}_k^T$ with an $L^\infty$ function is merely bounded on $L^2$.

Given \eqref{intermednrmlbound}, the $L^2(\RR^{n+1})$ norm of \eqref{intermednrml} exhibits an overall gain of $\mu^{-1}$, which is stronger than the gains of $2^{j/2}$ even after logarithmic losses from summation in $k,l$.  Note that the same principle works when the $\beta_l^N$ is replaced by a $\beta_{<k}^N$.

Now observe that
\begin{multline*}
\mu^{-1}2^{\frac j4-k}\sum_{i=0}^{n-1} \|\psi_j\Delta_g\left( D_{x_i}\beta_l^N\beta_k^T (\phi \tilde{u}_\mu) \right)\|_{L^2} +
 \mu^{-2}2^{\frac j2} \|\psi_j\Delta_g\left( D_{x_n}\beta_l^N\beta_k^T (\phi \tilde{u}_\mu) \right)\|_{L^2}\\
\lesssim \mu^{-1}2^{\frac j4+l}\sum_{i=0}^{n-1} \|\psi_j D_{x_i}\tilde{\beta}_l^N\tilde{\beta}_k^T (\phi \tilde{u}_\mu) \|_{L^2}
+\mu^{-2}2^{\frac j2+2l} \|\psi_j D_{x_n}\tilde{\beta}_l^N\tilde{\beta}_k^T (\phi \tilde{u}_\mu) \|_{L^2}
\end{multline*}
where $\tilde{\beta}_l^N$, $\tilde{\beta}_k^T$ denote vectors of Fourier multipliers with symbols supported in $\supp(\beta_l^N)$, $\supp(\beta_k^T )$ respectively.  We now claim that this in turn is bounded by
\[
\left( \frac{2^{l+\max(0,\frac j4-\frac l6)}}{\mu}+\frac{2^{2l+\max(0,\frac j2-\frac l3)}}{\mu^{2}}\right)
\left(\|\tilde{\beta}_l^N\tilde{\beta}_k^T (\phi \tilde{u}_\mu) \|_{H^1} +
 \big\|P\big(\tilde{\beta}_l^N\tilde{\beta}_k^T (\phi \tilde{u}_\mu)\big) \big\|_{L^2} \right).
\]
But this is a consequence of either using the parametrix as in \eqref{normalest} or applying Theorem \ref{thm:maincoordestimate} as before in the cases $l>k$ and $l=k$ respectively.  Indeed, when  $2^{-j}\geq 2^{-2l/3}$, the width of the collar is large enough relative to the frequency scale, so that we obtain the usual gain.  Otherwise if $2^{-j}< 2^{-2l/3}$ we obtain estimates by restricting to a larger collar of width $2^{-l}$.
Consequently, we may split the right hand side of \eqref{doublesum} into cases $3j/2 \leq l$, and $3j/2>l$ to see that up to terms exhibiting a stronger gain, it is bounded by
\[
\sum_{k=1}^{\log_2\mu}\left(\|\tilde{\beta}_k^T (\phi \tilde{u}_\mu) \|_{H^1}^2 +
 \left\|P\big(\tilde{\beta}_k^T (\phi \tilde{u}_\mu)\big) \right\|_{L^2}^2\right) ,
\]
after exploiting gains in $[P,\tilde{\beta}_k^N]$ as above.  Exploiting commutators as before, this in turn is bounded by the right hand side of \eqref{ejbound}.

\subsection{Refined local smoothing for the Schr\"odinger equation}\label{ss:schrod}
Here we prove the first half of Theorem \ref{thm:blairrls}, that is, we verify the estimate \eqref{rlsschrod}.  Suppose $f_k$ is spectrally localized to frequencies $\sqrt{\Delta_g} \in (\lambda/2,2\lambda)$ and that $k \in (\lambda/2,2\lambda)$. We observe the following bound for such functions which follows from the bound on the first term in the main estimate from Theorem \ref{thm:intrinsic}:
\begin{equation}\label{quasimodebound}
\max_{1 \leq 2^{-j} \leq \lambda^{\frac 23}} 2^{\frac j4}\|f_k\|_{L^2(S_{<j})} \lesssim \|f_k\|_{L^2( M)} + k^{-1}\|(k^2-\Delta_g) f_k\|_{L^2( M)}.
\end{equation}
Indeed, this follows by simply applying \eqref{intrinsiclocnrg} to $u_\lambda(t,x) = e^{itk}f_k(x)$ and using that $\|\nabla_g f_k\|_{L^2(M)}\lesssim k\|f_k\|_{L^2(M)}$.  It is interesting to note that this shows that if $f_k$ is an $L^2$-normalized quasimodes/spectral clusters satisfying $\|f_k\|=1$ and $k^{-1}\|(k^2-\Delta_g) f_k\|_{L^2( M)}\lesssim 1$, then $\|f_k\|_{L^2(S_{<j})} $ is $O(2^{-j/4})$ for any $1 \leq j \leq \lambda^{\frac 23}$.

To prove \eqref{rlsschrod}, it suffices to prove it for solutions to the homogeneous equation $(D_t+\lambda^{-1}\Delta_g)v_\lambda=0$.  Moreover, it suffices to assume that for some orthogonal collection of eigenfunctions  $\Delta_g \varphi_l =\lambda_l^2\varphi_l$
\[
v_\lambda(t,x) = \sum_{\lambda/2 < \lambda_l < 2\lambda}e^{-\frac{it\lambda_l^2}{\lambda}} \varphi_l(x) \quad \text{with} \quad
\|v_\lambda(0,\cdot)\|_{L^2( M)}^2 = \sum_{\lambda/2 < \lambda_l < 2\lambda} \|\varphi_l\|_{L^2( M)}^2.
\]

Let $\psi$ be a smooth bump function such that $\psi(t)=1$ for $t \in [-1,1]$ and $\psi(t)=0$ for $|t|\geq 2$.  By Plancherel's identity, we have
\begin{equation*}
\|v_\lambda\|_{L^2_T(S_{<j})}^2 \lesssim \int_{S_{<j}} \int_{-\infty}^\infty \Big| \sum_{k\approx \lambda}\sum_{\lambda_l \in[k,k+1)} \widehat{\psi}(\tau+\lambda^{-1}\lambda_l^2)\varphi_l(x)\Big|^2\,d\tau\,dx.
\end{equation*}
Next we observe that since $\widehat{\phi}$ is rapidly decreasing, we may restrict the domain of integration of the inner integral on the right to $-\tau \in [\lambda/4,4\lambda]$.  Indeed, if $\tau$ is outside this interval, then $|\tau+\lambda^{-1}\lambda_l^2| \gtrsim \lambda$, and hence the contribution of $-\tau \notin [\lambda/4,4\lambda]$ to this integral is $O(\lambda^{-N}\|v_\lambda(0,\cdot)\|_{L^2( M)}^2 )$ for any $N$.

We now use that $\sum_{k\approx \lambda}\langle k-\sqrt{-\lambda\tau}\rangle^{-2} \lesssim 1$ for any $\tau$, $\lambda$ to get that
\begin{multline}\label{fttime}
\int_{S_{<j}} \int_{-4\lambda}^{-\frac{\lambda}{4}} \Big| \sum_{k\approx \lambda}\sum_{\lambda_l \in[k,k+1)} \widehat{\psi}(\tau+\lambda^{-1}\lambda_l^2)\varphi_l(x)\Big|^2\,d\tau\,dx \lesssim \\
\sum_{k\approx \lambda} \int_{-4\lambda}^{-\frac{\lambda}{4}} \int_{S_{<j}} \Big| \sum_{\lambda_l \in[k,k+1)} \langle k-\sqrt{-\lambda\tau}\rangle\widehat{\psi}(\tau+\lambda^{-1}\lambda_l^2)\varphi_l(x)\Big|^2\,dx\,d\tau
\end{multline}
When $\lambda_l \in [k,k+1)$, $-\tau \in [\lambda/4,4\lambda]$ we have that
\[
\langle\tau+\lambda^{-1}\lambda_l^2\rangle^{-1} \lesssim \langle\lambda^{-1}(k^2-(-\lambda\tau) )\rangle^{-1} \lesssim \langle k-\sqrt{-\lambda\tau}\rangle^{-1},
\]
and hence the rapid decay of $\widehat{\psi}$ implies that
\[
\max(1,k^{-1}|k^2-\lambda_l^2|) \langle k-\sqrt{-\lambda\tau}\rangle|\widehat{\psi}(\tau+\lambda^{-1}\lambda_l^2)| \lesssim (1+|\tau+\lambda^{-1}\lambda_l^2| )^{-2}.
\]
Thus \eqref{quasimodebound} applied to $f_k = \sum_{\lambda_l \in [k,k+1)} \langle k-\sqrt{-\lambda\tau}\rangle\widehat{\psi}(\tau+\lambda^{-1}\lambda_l^2)\varphi_l$ and orthogonality thus show that the right hand side of \eqref{fttime} is bounded by $\|\varphi_l\|_{L^2( M)}^2$, concluding the proof of \eqref{rlsschrod}.

\begin{remark}
Let $M\subset \RR^n$ be a $C^\infty$ domain exterior to a compact, connected, strictly convex obstacle.  We remark that the bound \eqref{rlsschrod} for solutions to the semiclassical equation can be combined with \eqref{locsmoothunit} to give estimates on solutions to the classical Schr\"odinger equation $(D_t+\Delta)v_\lambda=0$ which are frequency localized in that $v(t,\cdot) = \beta(\lambda^{-2}\Delta)v(t,\cdot)$ for some $\beta \in C_c^\infty (1/2,2)$ using the functional calculus of the Dirichlet or Neumann Laplacian.  Specifically, we claim that the following refinements of the local smoothing bounds hold:
\begin{equation}\label{rlsschrodclassical}
2^{\frac j4}\lambda^{\frac 12}\|v_\lambda\|_{L^2(\RR\times S_{<j,M})} \lesssim \|v_\lambda(0,\cdot)\|_{L^2(M)}
\end{equation}
where this time $S_{<j,M} = \{x \in M: d(x,\prtl M) \leq 2^{-j}\}$ and the implicit constant is independent of $\lambda $ and $1 \leq j \leq \frac 23\log_2\lambda$.  This extends the results of Ivanovici \cite{ivanoballs} for the exterior to a ball in $\RR^n$ to this more general setting, and reflects that for the classical Schr\"odinger equation, propagation speed is proportional to frequency.

To see this, let $\psi \in C^\infty_c(\overline{M})$ be identically one on a neighborhood $\prtl M$.  As a consequence of \eqref{locsmoothunit}, we have if $F_\lambda:=(D_t+\Delta_g)(\psi v_\lambda)$, then
\[
\lambda^{\frac 12}\|\psi v_\lambda\|_{L^2(\RR \times M)} + \lambda^{-\frac 12}\|F_\lambda\|_{L^2(\RR \times M)} \lesssim \|v_\lambda(0,\cdot)\|_{L^2(M)}.
\]
Note that since $v_\lambda$ is spectrally localized to frequencies near $\lambda \gg 1$, this bound is valid for both boundary conditions in \eqref{BCs}.  For $k \in \mathbb{Z}$, let $I_k = [k,k+1]$ and $I_k^* = [k-1,k+2]$.  If we can show that
\[
2^{\frac j4}\lambda^{\frac 12}\|v_\lambda\|_{L^2((\lambda^{-\frac 12}I_k)\times S_{<j,M})} \lesssim \lambda^{\frac 12}\|\psi v_\lambda\|_{L^2((\lambda^{-\frac 12}I_k^*) \times M)} +
\lambda^{-\frac 12}\|F_\lambda\|_{L^2((\lambda^{-\frac 12}I_k^*) \times M)}
\]
over the dilated intervals $\lambda^{-1/2}I_k = [\lambda^{-1/2}k,\lambda^{-1/2}(k+1)]$, then the prior estimate and summation in $k$ will yield \eqref{rlsschrodclassical}.  Setting
\[
\tilde{v}_\lambda(t,x) := v_\lambda(\lambda^{-1}t,x) \qquad \tilde{F}_\lambda(t,x):=(D_t+\lambda^{-1}\Delta)\tilde{v}_\lambda(t,x) = \lambda^{-1}F(\lambda^{-1}t,x)
\]
means that we are reduced to showing that
\[
2^{\frac j4}\|\tilde{v}_\lambda\|_{L^2(I_k\times S_{<j,M})} \lesssim\|\tilde{v}_\lambda\|_{L^2(I_k^*\times M)}  +
\|\tilde{F}_\lambda\|_{L^2(I_k^*\times M)}
\]
But this can be seen as a consequence of \eqref{rlsschrod} above.  Indeed, by taking $R$ large so that $\supp(\psi) \subset (-R,R)^n$ and by identifying sides of the cube here, we may isometrically embed $\supp(\psi)$ into a compact manifold with boundary.  We may now adapt the argument in \eqref{doubletrick} to the present setting to conclude the proof.
\end{remark}

\end{document}